\newcommand{\bdm}{\begin{displaymath}}
\newcommand{\edm}{\end{displaymath}}
\newcommand{\define}{:=}
\newcommand{\dd}[1]{\, {\mathrm d}{#1}}
\newcommand{\dif}[2]{{\frac{\mathrm{d}{#1}}{\mathrm{d}{#2}}}}
\newcommand{\difh}[3]{{\frac{\mathrm{d}^{#1}{#2}}{\mathrm{d}\,{#3}^{#1}}}}
\renewcommand{\(}{\left(}
\renewcommand{\)}{\right)}
\newcommand{\Rf}{{\mathbb R}}
\newcommand{\im}{{\mathrm i}}
\renewcommand{\oint}{\ointctrclockwise}
\newcommand{\rank}{\operatorname{rank}}
\newcommand{\tr}{\operatorname{tr}}
\def\@email#1#2{%
 \endgroup
 \patchcmd{\titleblock@produce}
  {\frontmatter@RRAPformat}
  {\frontmatter@RRAPformat{\produce@RRAP{*#1\href{mailto:#2}{#2}}}\frontmatter@RRAPformat}
  {}{}
}%
\begin{document}

\preprint{AIP/123-QED}

\title[Nonlocal Integral Operators and SLP]{On a Nonlocal Integral Operator Commuting with the Laplacian and the Sturm-Liouville Problem: Low Rank Perturbations of the Operator}
\author{Lotfi Hermi}
\email{lhermi@fiu.edu.}
\homepage{https://faculty.fiu.edu/~lhermi.}
\affiliation{ 
		Department of Mathematics and Statistics,\\
		Florida International University,\\
		Miami, FL, USA	
}%

\author{Naoki Saito}%
\email{saito@math.ucdavis.edu.}
 \homepage{https://www.math.ucdavis.edu/~saito/.}
\affiliation{ 
Department of Mathematics, \\
University of California, One Shields Avenue, \\
Davis, CA 95616 USA}

\date{\today}

\begin{abstract}
		We reformulate all general real coupled self-adjoint boundary value problems as integral operators and show that they are all finite rank perturbations of the free space Green's function on the real line. This free space Green's function corresponds to the nonlocal boundary value problem proposed earlier by Saito [N.~Saito, Appl.~Comput.~Harmonic Anal., \textbf{25}, 68--97 (2008)].
        We prove these perturbations to be polynomials of rank up to 4. They encapsulate in a fundamental way the corresponding boundary conditions.
\end{abstract}

\maketitle


\newtheorem{theorem}{Theorem}[section]
\newtheorem{lemma}[theorem]{Lemma}
\newtheorem{corollary}[theorem]{Corollary}
\newtheorem{definition}[theorem]{Definition}
\newtheorem{proposition}[theorem]{Proposition}
\newtheorem{remark}[theorem]{Remark}
\newtheorem{example}[theorem]{Example}
\allowdisplaybreaks

\crefname{lemma}{lemma}{lemmas}
\Crefname{lemma}{Lemma}{Lemmas}
\crefname{theorem}{theorem}{theorems}
\Crefname{theorem}{Theorem}{Theorems}
\crefname{remark}{remark}{remarks}
\Crefname{remark}{Remark}{Remarks}
\crefname{example}{example}{examples}

\section{Introduction}
\label{sec:intro}
In this article, we continue the program initiated in Ref.~\onlinecite{SAITO-LAPEIG-ACHA}, and revisited in Ref.~\onlinecite{HS-ACHA18}. 
In Ref.~\onlinecite{SAITO-LAPEIG-ACHA}, Saito proposed the use the eigenfunctions of the integral operator commuting with the Laplace operator for analyzing functions and data defined on a general shape domain $\Omega \subset \Rf^d$. Computing eigenfunctions of the integral operator is simpler, more stable, and much faster with mordern fast algorithms such as the Fast Multipole Method~\cite{GREENGARD-ROKHLIN,Xue2007} than directly solving the corresponding Laplacian eigenvalue problem (via the Helmholtz equation). This formulation imposes an interesting \textit{nonlocal boundary condition in the Helmholtz equation formulation}, and we have been interested in investigating the nature of this boundary condition. Restricting the domain $\Omega$ to intervals in $\Rf$, but extending the kernel function from that of Saito's original proposal, we continued our analysis of the integral operators and their spectral properties~\cite{HS-ACHA18}. 

This article addresses an old question of Gel'fand \& Levitan~\cite{GelfandLevitan53}, namely the nature of the spectrum under perturbation. We treat this question in an integral operator setting. This formulation is expressed in terms of the Green's function, or fundamental solution, that corresponds to an elliptic operator. Green's functions are an important tool of Quantum Field Theory, Electrodynamics, Seismology, and Partial Differential Equations which found new applications in Machine Learning. In Quantum Field Theory, the Green's function is called the \emph{propagator} \cite{Peskin-Schroeder1995} or \emph{two-point correlation function} \cite{Griffiths-Schroeter2018}. It encodes the probability of measuring a field at a given point given its source at another. In Seismology, the Green's function plays a fundamental role in the solution of elastodynamic systems~\cite{AKI-RICHARDS}.
Integral operators are prominent in Machine Learning  \cite{carreira, chui-wang, rosasco-belkin-devito, luxburg, vonLux08,vonLux04, Yao07}. They are ubiquitous in spectral clustering algorithms, kernel methods, and many manifold learning algorithms. Understanding their spectra is fundamental for various applications \cite{lee, LIEU-SAITO-ENSEMBLE, maaten}. Perturbation techniques are traditionally used to obtain closed forms of such kernels \cite{KATO}. In this article we show that
they can also reveal information about the boundary conditions more explicitly.

We focus on the one-dimensional setting of \emph{a nonlocal boundary value problem defined by a symmetric kernel on a finite interval}. The problems we treat have roots in the work of Marcel Riesz \cite{Riesz38,Riesz49}, and in the work of Schoenberg~\cite{Schoenberg1,Schoenberg2,Schoenberg3} in the discrete setting. A general formulation is given in Hellwig \cite[Chap.~4]{Hellwig}. We offer a unified alternative following Kato \cite{KATO} and more recent works by Gesztesy and his collaborators \cite{GK-JFA19,AGHKLT19,AGHKLT18,GK-survey19,GK-QAM19}. The spectral framework proposed in this article provides an efficient and systematic way of obtaining the Green's function, especially in the case where $\lambda=0$ is one of the eigenvalues (compare, e.g., with the Neumann boundary condition calculations in Porter \& Stirling \cite{PorterStirling}), or the more recent  Fucci \emph{et.\ al} \cite{FGKLNS2021,FGKNS2021} for a more general setting of the Kre\u{\i}n-von-Neumann eigenvalue problem. Our work aligns with Gesztesy \& Kirsten \cite{GK-survey19,GK-QAM19}. Our framework explains the complications in the expression of the Green's function in terms of Riesz projections.

	
In its simplest form, our operator of interest is a \emph{nonlocal} operator defined by 
\begin{equation} \label{eq:Saito-Operator} 
\mathcal{K} f \define -\frac{1}{2} \, \int_0^1 |x-y| \, f(y) \dd{y}.
\end{equation}
We are interested in the spectral properties of this operator. The eigenvalue problem associated with \cref{eq:Saito-Operator}, 
i.e., $ \mathcal{K} u = (1/\lambda) u$, corresponds to the following \emph{Sturm-Liouville eigenvalue problem:}
\begin{align}
\label{eq:Saito-Operator2} 
u''+ \lambda u &=0 \quad \text{in $(0,1)$}, \notag \\
u'(0) &= - u(0) - u(1), \notag \\
u'(1) &= u(0)+u(1) .
\end{align} 
We are also interested in the spectral properties of the general problem
\begin{align} \label{eq:general} 
u''+ \lambda u &= 0 \quad \text{in $(0,1)$}, \notag \\
u'(0) &= \alpha u(0) + \beta u(1) , \notag \\
u'(1) &= \gamma u(0)+ \delta u(1) ,
\end{align} 
where $\gamma=-\beta$ ($\neq 0$) to guarantee the self-adjointness of the operator 
(see Ref.~\onlinecite{FOLLAND-FOURIER}, especially Sec. 3.5, 3.6). 
We let \begin{equation*}
C \define \left( {\begin{array}{cc}
	\alpha & \beta \\
	-\beta & \delta 
	\end{array} }\right) \in \mathbb{R}^{2 \times 2} .
\end{equation*}
We observe that this boundary condition (BC) corresponds to the \emph{General Self-Adjoint Real-Coupled (``GSARC'' for short) BC} introduced in Refs.~\onlinecite{BEZ96,KZ96a, KZ96b} (see also Refs.~\onlinecite{EKWZ99,ZettlBook2005}). For a given $2\times2$ matrix $C\in SL(2,\mathbb{R})$, i.e., the Special Linear Group acting on $\mathbb{R}$, 
the GSARC problem is also given by
\begin{equation} \label{eq:kan}
\begin{pmatrix}
u(1)\\
u'(1)
\end{pmatrix}= B \, \begin{pmatrix}
u(0)\\
u'(0)
\end{pmatrix}
\end{equation}
Note that the restriction $\det B=1$ is imposed in the literature to simplify the problem. To see that \cref{eq:kan} follows from \cref{eq:general}, we calculate the matrix $B$, explicitly to obtain
\begin{equation*}
B = \left( {\begin{array}{cc}
	-\frac{\alpha}{\beta} & \frac{1}{\beta} \\
	-\frac{\beta^2 + \alpha \delta}{\beta} & \frac{\delta}{\beta} \\
	\end{array} }\right)
\end{equation*}
\begin{remark} The matrix $B$ is sometimes written in terms of the sines and cosines to reflect the self-adjointness; see, e.g., the works of F.~Gesztesy and his collaborators~\cite{FGKNS2021,FGKLNS2021,GK-JFA19}. To highlight this angular dependence, we note that one can indeed recast \cref{eq:Saito-Operator2} in the form \cref{eq:general}, using the KAN decomposition of $B$, viz.,
	\begin{equation} \label{eq:kan2}
	B = K A N
	\end{equation}
	with $K= \left( {\begin{array}{cc}
		\cos\theta & -\sin\theta \\
		\sin\theta & \cos\theta \\
		\end{array} }\right)$, 	$A= \left( {\begin{array}{cc}
		r & 0 \\
		0 & 1/r \\
		\end{array} }\right)$, and $N= \left( {\begin{array}{cc}
		1 & n \\
		0 & 1 \\
		\end{array} }\right)$, 	
	where $r>0$, and $\theta, n\in \mathbb{R}$. (This is a unique decomposition called ``the Iwasawa decomposition'' in the Lie theory literature; it is a special case of QR factorization; see \cite{delaCruzReyes21}). The correspondence between \cref{eq:kan2} and \cref{eq:general} is explicitly given by
	\[\alpha =\frac{\cos\theta + n r^2 \sin\theta}{n r^2 \cos\theta- \sin\theta}, \quad \beta=\frac{-r}{n r^2 \cos\theta- \sin\theta}, \text{ and } \quad \delta=\frac{r^2 \cos\theta}{n r^2 \cos\theta- \sin\theta}.\] 
	\qed
\end{remark}

\begin{remark} \label{rk I.2} When $\beta=-\gamma=0$ in \cref{eq:general} the problem corresponds to what can be termed as the General Self-Adjoint Real-Separated (``GSARS'' for short) BC, detailed in the work of Folland \cite{FOLLAND-FOURIER} (Section 3.5, pp.~86-95). In the series of articles of Zettl and his co-authors \cite{KZ96b, KZ96a,KWZ2008} on the geometric structures of spaces of boundary conditions, it sometimes takes the form
\begin{align} \label{eq:GSARS} 
u'(0) \sin \theta_0 + u(0) \cos \theta_0 &= 0 , \notag \\
u'(1) \sin \theta_1 + u(1) \cos \theta_1 &= 0,
\end{align} 
where $\theta_0, \theta_1 \in [0, \pi)$. These correspond to $\alpha=-\cot \theta_0$ and $\delta=-\cot \theta_1$ in our notation. We shall discuss these particular GSARS cases in this article.
A general framework for spaces of boundary conditions for `separated' and `coupled' self-adjoint BCs for higher order Sturm-Liouville problems can be found in the literature. \cite{Atkinson1964,WSZ2008,WSZ2011}.
\end{remark}


Recent activity \cite{AGLMS2017,AGMST2013,AGMST2010}, following \cite{AlonsoSimon80}, has focused on minimal extension of operators and their properties. We adopt a more classical point of view, \cite{Lidskii59, GohbergKrein1969} focusing on \emph{lower rank perturbation} of \cref{eq:Saito-Operator}. We will show that in the most general setting, all known examples of the Sturm-Liouville problem are polynomial perturbations of \cref{eq:Saito-Operator} by operators of rank at most 4. We believe the result to be new. 
This article proposes:	
\begin{itemize}
	\item A unified view of all known boundary value problems (BVPs) recasting them in terms of the integral operator formulation (e.g., Green's functions);
	\item All of these BVPs are equivalent --- up to the finite dimensional perturbation --- to the base nonlocal integral operator \cref{eq:Saito-Operator}. The BCs for all these problems are indeed encoded in the perturbation in the following sense: The nonlocal operator \cref{eq:Saito-Operator} corresponds to the Green's function on the whole real line; introducing BCs on $[0,1]$ amounts to introducing a perturbation on this free-space Green's function.
	\item A unified approach to obtain the Green's functions from the expansion of the resolvents. This is an expansion on old ideas due to Kato \cite{KATO}. 
\end{itemize}
The spectral questions explored here connect in a fundamental way with the works \cite{AGHKLT18,AGHKLT19,FGKLNS2021,FGKNS2021}. 

\begin{remark}
	We are interested in trace or sum rules for spectral functions associated with $\mathcal{K}$ in \cref{eq:Saito-Operator}, emanating from our recent work \cite{HS-ACHA18}. We plan to treat the closed forms for the iterated kernel and spectral functions corresponding to various Sturm-Liouville problems in our upcoming series of articles. \qed
\end{remark}

\begin{remark}
	Perturbation questions of the types treated here have been visited in the literature in applications to the Laplacian of a graph (in particular, a tree) and its connection with the distance matrix; see \cite{BapatKirklandNeumann,So}. 
	\qed
\end{remark}

The integral operator framework works very well for the nonlocal operator \cref{eq:Saito-Operator}. The literature has focused much on Dirichlet and Neumann BVPs; see, e.g., \cite{PorterStirling}. However, for the other BVPs we deal with in this paper, it is more difficult to derive the corresponding Green's functions.
We propose a unified approach to obtain the Green's functions from the expansion of the resolvents. This has been explored by Kato~\cite{KATO}, but not fully exploited yet as far as we know.

\begin{remark}
\Cref{tab:classif} summarizes the values of $\alpha$, $\beta$, and $\delta$ for each BC we deal with in our article, including the periodic and anti-periodic cases. They are coupled BCs and part of a different setting in the literature. \cite{KZ96b, KZ96a,KWZ2008}
\end{remark}

\begin{table}[tbhp]
	\footnotesize
	\caption{Discriminant and rank of perturbation for values of $\alpha, \beta, \delta$ for various BCs; KvN=Kre\u{\i}n-von-Neumann; GSARC=General Self-Adjoint Real-Coupled; GSARS=General Self-Adjoint Real-Separated;
		$\Delta:=\delta-2\beta-\alpha+\beta^2+\alpha \delta$}
	\hspace{-4em}
	\begin{tabular}{|l || c | c | c | c | c | c | c | c | c| c |c | c|}\hline
		\textbf{Problem}
		& 
		Nonlocal 
		& 
		KvN
		&
		Dirichlet
		& 
		Neumann
		& 
		Robin
		& 
		Radoux 
		&
		GSARC
		& 
		GSARC	&
		GSARS
  &
		GSARS
&
		Periodic
  &
		Anti-Periodic
  \\
		\hline\hline
		& 
		$\alpha=-1$
		& 
		$\alpha=-1$ 
		& 
		$\alpha \to  \infty$ 
		& 
		$\alpha=0$
		& 
		$\alpha \to \infty$
		& 
		$\alpha \to \infty$
		& 
		&
        &
        $\beta=0$
		&
        $\beta = 0$
		&
        &
        \\
		\textbf{Conditions}
		& 
		$\beta=-1$ 
		& 
		$\beta=1$
		& 
		$ \beta=0$
		& 
		$\beta=0$
		& 
		$\beta=0$
		& 
		$\beta=0$
		& 
		$\Delta\ne 0$
		& 
		$\Delta= 0$ 
	  & 
		$\Delta\ne 0$
		& 
		$\Delta= 0$ 
        &
        -
        &
        -
        \\		
		& 
		$\delta=1$ 
		& 
		$\delta=1$ 
		& 
		$\delta\to \infty$
		& 
		$\delta=0$ 
		& 
		$\delta=0$
		& 
		$\delta=1$ 
		&  
		&
		&
        &
        &
        &
		\\
		\hline
		
		\textbf{Value of $\Delta$}
		&
		$4$
		&
		$0$
		&
		$\infty$
		&
		$0$
		&
		$-\infty$
		&
		$1$
		&
		$\ne 0$ 
        &
		$0$
        &
		$\ne 0$
        &
		$0$
        &
		-
        &
        -
\\ \hline
		\textbf{Perturbation Rank}
		&
		-
		&
		$4$
		&
		$2$
		&
		$2$
		&
		$2$
		&
		$2$
		&
		$2$
		&
		$2,3, \text{or } 4$
		&
		$2$
		&
		$2$
	    &
        3
        &
        1
      \\
		\hline
	\end{tabular}
	\label{tab:classif}
\end{table}

The organization of this article is as follows. In \cref{sec:BVP-setting} we show the equivalence of the problems \cref{eq:Saito-Operator} and \cref{eq:Saito-Operator2}. 
In \cref{sec:resolvent-integral} we show how to obtain the integral operator from the resolvent. 
In \cref{sec:perturbation} we show that all the problems of the form \cref{eq:general} are lower rank perturbations of \cref{eq:Saito-Operator} of up to rank 4.

\section{From the Integral Operator to the Differential Equation}
\label{sec:BVP-setting}
To the best of our knowledge the properties of the operator \cref{eq:Saito-Operator} were first explored in a very general setting in the works of Marcel Riesz \cite{Riesz38, Riesz49}. For simplicity, let
\begin{equation}\label{eq:kappa}
\kappa(x, y) \define -\frac{1}{2} \, |x-y|.
\end{equation}
To see that \cref{eq:Saito-Operator} and \cref{eq:Saito-Operator2} are equivalent, observe, as in M.~Riesz (see formulas of \cite[p.~41]{Riesz38} and \cite[p.~73]{Riesz49}), that the function 
\begin{equation}\label{eq:fundamental}
u(x)= \int_{0}^{1} \kappa(x,y) f(y) \dd{y}
\end{equation}
is a solution of the Poisson equation 
\begin{equation}\label{eq:poisson}
-u''(x) = f(x).
\end{equation}
Note that 
\begin{equation} \label{eq:fund-soln}
-\frac{\partial^2 \kappa(x,y)}{\partial y^2}=\delta(x-y),
\end{equation}
where $\delta$ denotes the Dirac delta function. 
By \cref{eq:poisson} 
\begin{align} \label{eq:calc}
u(x) &= \int_{0}^{1} \kappa(x,y) \, f(y) \, \dd{y} \notag \\
&= -  \int_{0}^{1} \kappa(x,y) u''(y) \, \dd{y} \notag \\
&= - \int_{0}^{1} \frac{\partial^2 \kappa(x,y)}{\partial y^2}  u(y) \, \dd{y} + \left[-u'(y) \kappa(x,y)+ u(y) 
\frac{\partial \kappa}{\partial y} \right]_{0}^{1}  \notag \\
&= u(x)+ \left[-u'(y) \kappa(x,y)+ u(y) 
\frac{\partial \kappa}{\partial y} \right]_{0}^{1}.  
\end{align}
Therefore, it must satisfy the BC
\begin{equation*}
\left[-u'(y) \kappa(x,y)+ u(y) 
\frac{\partial \kappa}{\partial y} \right]_{0}^{1} = 0. 
\end{equation*}
This means 
\begin{equation*}
-u'(1) \kappa(x,1)+ u(1) 
\frac{\partial \kappa}{\partial y}\Big|_{y=1} = -u'(0) \kappa(x,0)+ u(0) 
\frac{\partial \kappa}{\partial y}\Big|_{y=0}
\end{equation*}
and reduces to
\begin{equation*}
-\frac{1}{2} u'(1)\, \left(x-1\right) + \frac{1}{2} u(1) = \frac{1}{2} u'(0)\,  x -\frac{1}{2} u(0) 
\end{equation*}
for any $x\in[0,1]$. Equating the coefficients leads the BCs in \cref{eq:Saito-Operator2}.

\section{From the Resolvent to the Integral Operator}
\label{sec:resolvent-integral}
The standard procedure to obtain the integral operator corresponding to a differential operator is to find the fundamental solution, or Green's function, first. Then the integral operator is just the formal solution of the problem. In our setting above, the fundamental solution of \cref{eq:Saito-Operator2} is $\kappa(x,y)$ (viz. \cref{eq:poisson}) satisfying its BCs. Reversing the steps of \cref{eq:calc} proves this point. 

Rather than following this procedure, we propose a unified framework for GSARC problems. They cover $SL(2,\mathbb{R})$ cases, and the prominent periodic and anti-periodic cases, which do not follow the GSARC setting. The various BCs are summarized in \cref{tab:classif}. 

We propose a framework where the calculations proceed via the \emph{resolvent}. The Green's function is then obtained from the expansion of this resolvent.
We note four well-known examples appearing in the literature: Stakgold and Holst \cite{StakgoldGreenBook} (Example 1, pp.~416--420, eigenvalue problem (7.1.28)); Kato \cite{KATO} (Example 6.21, p.~183; Example 4.14, p.~293; and Example 1.4, p.~367). In these examples, the resolvent is calculated explicitly, but the full extent of the method we are proposing in this article is not exploited.	

The proposed approach exploits in a fundamental way the Taylor and/or Laurent expansion of the resolvent function (or Neumann series) corresponding to linear operators. This approach aligns with the recent work of Gesztesy and Kirsten \cite{GK-survey19,GK-JFA19,GK-QAM19}. Our procedure explicitly calculates expressions of these operators for all GSARC problems. This method turns out to be an efficient tool for obtaining closed forms of the zeta spectral function in terms of the iterated Brownian bridge \cite{HSIteratedKernel2}. 

\subsection{Review of resolvents and Green's functions}
\label{sec:resolvent}
For relevant materials, we cite the standard literature on the properties of the resolvent, and provide the necessary background we will need in this article; see Yosida \cite[Chap.~8]{Yosida}; Kato \cite[Chap.~1; Chap.~10]{KATO}; Dunford \& Schwartz \cite[Chap.~7]{Dunford-Schwartz}; and Taylor \& Lay \cite[Chap.~5]{Taylor-Lay}. These expansions have roots in Nagumo's 1916 article \cite{Nagumo}; see also \cite{KRESS3}.  
Let $A$ be a densely defined and closed linear operator of trace class on a Hilbert space $\mathcal{H}$ with domain $\mathcal{D}(A)$ and range $\mathcal{R}(A)$ in $\mathcal{H}$. For the applications we envision in this paper, the \emph{spectrum} consists of a discrete set of eigenvalues, $\lambda_1\le \lambda_2\le \ldots \le \lambda_k\le \ldots $ accumulating to $\infty$, i.e., $\sigma(A)=\{\lambda_k\}_{k=1}^{\infty}$ This is certainly the case of all the problems defined by the eigenvalue problem \cref{eq:general}. Finally, let  $\rho(A)=\mathbb{C}\setminus \sigma(A)$ be the \emph{resolvent set} of $A$, and $T=A^{-1}$ its inverse. 

On $\rho(A)$, the \emph{resolvent} $R(z) \define \left(A-z I\right)^{-1}$ is a 
holomorphic 
function of $z$.
Moreover, it admits the expansion (see, e.g., \cite[p.~23; p.~193]{KRESS3}):

\begin{align} \label{eq:Neumann-series}
R(z) &= \left(A \, \left(I-z \, A^{-1}\right)\right)^{-1} \notag \\
&=\left(I-z \, A^{-1}\right)^{-1} \, A^{-1} \notag \\
&= A^{-1} + A^{-2} z + A^{-3} \, z^2 + \cdots \notag \\
&= T + T^{2} z + T^{3} \, z^2 + \cdots
\end{align}
for $|z| \, \|A^{-1}\|= |z| \, \|T\| <1$. Here $I$ is the identity operator	on $\mathcal{H}$. 
In the applications we propose $T$  will correspond to the integral operator associated with $A$, while $T^{k+1}$ is  the associated iterated form of order $k$. 
The singularities of $R(z)$ are exactly the eigenvalues of $A$. If $z=0$ is an eigenvalue, $R(z)$ admits \emph{the Laurent series}  
\begin{equation} \label{eq:Laurent-series}
R(z) = \sum_{n=-\infty}^{\infty} z^n A_n. 
\end{equation}
The coefficients $A_n$ are given by
\begin{equation} \label{eq:Laurent-series2}
A_n = \frac{1}{2 \pi \im} \, \oint_{C(0, \epsilon)} z^{-n-1} R(z) \dd{z}.  
\end{equation}
where $C(0,\epsilon)$ is a positively-oriented small circle of radius $\epsilon$ centered at $z=0$, excluding all other eigenvalues of $A$. We note that \emph{the Riesz projection} $P:=-A_{-1}$. In more explicit terms, following Kato \cite[Sec.~III.6.5]{KATO}, if $z=0$ is an isolated singularity, then
\begin{equation} \label{eq:Laurent-series-key}
R(z) = -\frac{P}{z} - \sum_{n=1}^{\infty} \frac{D^n}{z^{n+1}}+ 
\sum_{n=0}^{\infty} z^n S^{n+1},
\end{equation}
with 
\begin{equation} \label{eq:nilpotent}
P \define-\frac{1}{2\pi \im} \, \oint_{C(0, \epsilon)} R(z) \dd{z}, \quad D \define A P = -\frac{1}{2\pi \im} \, \oint_{C(0, \epsilon)} z R(z) \dd{z} ,
\end{equation}
$D$ is quasi-nilpotent with 
\begin{equation}
D=D\, P = P \, D ,
\end{equation}
and
\begin{equation}
S := \frac{1}{2\pi \im} \, \oint_{C(0, \epsilon)} \frac{R(z)}{z} \dd{z} ,
\end{equation}
satisfying 
\begin{equation}
A \, S = I - P,  \quad S \, P = P \, S = 0. 
\end{equation}
The dimension of the range corresponding to $P$ is the algebraic multiplicity, $m(T;0)$ of $z=0$ \cite{GK-survey19,GK-JFA19,GK-QAM19}:
\begin{equation}
m(T;0)= \dim \mathcal{R}(P) = \tr_{\mathcal{H}} \, \left(P \right) .
\end{equation}
When $z=0$ is not an isolated singularity, $P\equiv 0$, $S=A^{-1}=T$ and we recover \cref{eq:Neumann-series}.


\subsection{Our one-dimensional setting}
\label{sec:1D}
For the BVPs we deal with, we set $A=-\difh{2}{}{x}$ + BCs on $[0,1]$. We present a unified approach to all GSARC problems. We also investigate a couple of non-GSARC problems that appear in the literature, namely the periodic and anti-periodic cases treated in \cite{FGKLNS2021}. Our results in this article focus on perturbation questions \emph{not} pursued in \cite{FGKLNS2021,GK-survey19,GK-JFA19,GK-QAM19}. 

For $z\in \rho(A)$, the resolvent $R(z)$ applied to a function $f(x)$ takes the form
\begin{equation}\label{eq:Green-Helmholtz}
R(z)f(x) = \left(\left(A- z I\right)^{-1}\, f \right) (x)=\int_{0}^{1} G(z,x,y) f(y) \, \dd{y} ,
\end{equation}
where $G(z,x,y)$ is the associated kernel.
Explicit resolvent kernels for the various BCs considered in this paper are listed in \cref{tab:Resolvent}. We will also denote by $G_0(x,y)$ 
the Green's function corresponding to the expansion \cref{eq:Laurent-series-key} 
\begin{equation}\label{eq:Green-S}
S f (x) = \left(A^{-1}\left(I- P\right) f \right) (x)=\int_{0}^{1} G_0(x,y) f(y) \, \dd{y}.
\end{equation}
It is explicitly given in \cref{tab:Green} for the various BCs considered in this paper. 
\begin{remark}
	Though its study is relegated to our subsequent article, the iterated Green's function corresponding to $S^{n+1}$ in \cref{eq:Laurent-series-key} is denoted by $G_n(x,y)$, viz., 
	\begin{equation}\label{eq:Green-Sn}
	S^{n+1} f (x) = \int_{0}^{1} G_n(x,y) f(y) \, \dd{y}.
	\end{equation} \qed
\end{remark}
We end this discussion by deriving the condition under which \cref{eq:general} admits $\lambda=0$ as an eigenvalue (and hence leads to an isolated singularity leading to the Laurent series \cref{eq:Laurent-series-key} for the resolvent.) Clearly, for $\lambda=0$, we are after the condition that all lines of the form $y=m x +b$ are non-trivial solutions of the BVP \cref{eq:general} (with $\gamma=-\beta$). This means
\begin{align*}
m &=  \alpha b+ \beta (m+b)\notag \\
m &= -\beta b+ \delta (m+b)
\end{align*}
or 
\begin{align*} 
(1-\beta) m - (\alpha + \beta) b &= 0\notag \\
(1-\delta) m +(\beta- \delta) b &= 0. 
\end{align*}
Hence the following secular equation must hold:
\begin{equation}\label{eq:discriminant-condition}
\begin{vmatrix}1-\beta & - (\alpha + \beta) \\  1-\delta & \beta- \delta \end{vmatrix}=0.
\end{equation}
Simplifying, we are led to the condition $\Delta:= \delta - \alpha - 2 \beta + \beta^2 + \alpha \, \delta = 0$. We call $\Delta$ the \emph{discriminant} of the corresponding Sturm-Liouville problem.

Note that when $\beta=1$, \cref{eq:discriminant-condition} leads to $\alpha=-1$ \emph{or} $\delta=1$. This is a generalized instance of the Kre\u{\i}n-von Neumann problem where $\alpha=-1$, $\beta=1$, \emph{and} $\delta=1$. In fact, for the BVPs we deal with, only the Kre\u{\i}n-von Neumann BVP belongs to the GSARC class with $\Delta = 0$ while the other BVPs belong to the GSARC class with $\Delta \neq 0$. 
The discriminant condition and various BCs, including the ``separated'' case ($\beta=0$) are plotted in Fig.~\ref{fig:Discr Cond}. 

\begin{center}
\begin{figure}[tbhp!]
    \centering
    \includegraphics[width= .8\textwidth]{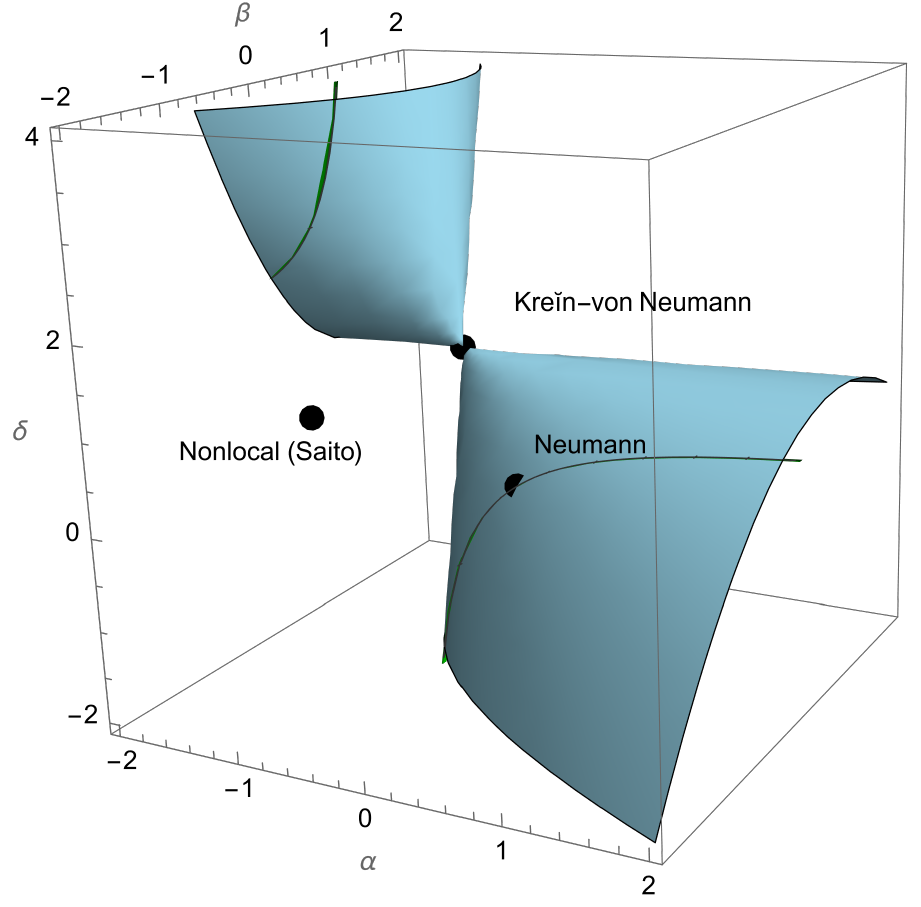}
    \caption{Discriminant condition $\delta - \alpha - 2 \beta + \beta^2 + \alpha \, \delta = 0$ corresponding to $\lambda=0$ being an eigenvalue, with various BCs treated, including the discriminant condition for the ``separated'' BCs case ($\beta=0$) indicated by the curve on the two sheets of the surface.}
    \label{fig:Discr Cond}
\end{figure}
\end{center}

\begin{table}[tbhp]
	\caption{Resolvent kernel $G(z,x,y)$ for various BCs; the formulas for GSARC  with  $\Delta=0$, GSARS with $\Delta=0$, and GSARS with $\Delta \neq 0$ are too long to list in this table; see \cref{Appendix:A}, \cref{Appendix:C}, and \cref{sec:GSARS Boundary Conditions}, respectively}
	\footnotesize
	\begin{tabular}{|c|c|} \hline
		\textbf{Problem}
		& \textbf{\begin{tabular}[c]{@{}c@{}} Resolvent \end{tabular}} \\ \hline \hline
		
		Nonlocal (Saito)
		& $ \frac{1}{ \sqrt{z}\, \(2+ 2 \cos\sqrt{z}+ \sqrt{z} \, \sin\sqrt{z} \)}
		\begin{cases}
		- \sqrt{z} \, \cos\(\sqrt{z}x\) \, \cos\(\sqrt{z}(1-y)\)
		+ 2 \cos\frac{\sqrt{z}}{2} \, \sin\(\frac{\sqrt{z}}{2}(1+2x-2y)\), & 0 \leq x \leq y \leq 1 \\
		- \sqrt{z} \, \cos\(\sqrt{z}y\) \, \cos\(\sqrt{z}(1-x)\)
		+ 2 \cos\frac{\sqrt{z}}{2} \, \sin\(\frac{\sqrt{z}}{2}(1+2y-2x)\) & 0 \leq y \leq x \leq 1
		\end{cases} $ \\ \hline
		
		Kre\u{\i}n-von Neumann
		\begin{hide}
			& $ \frac{1}{4 \sqrt{z} \( \sqrt{z} \cos\frac{\sqrt{z}}{2} - 2 \sin\frac{\sqrt{z}}{2} \) \sin\frac{\sqrt{z}}{2}}
			\begin{cases}
			\(-2 \sqrt{z} \cos\(\sqrt{z}x\) \, \cos\(\sqrt{z}(1-y)\) - 4 \sin\frac{\sqrt{z}}{2} \sin\(\sqrt{z}x\) \sin\(\frac{\sqrt{z}}{2}\(1-2y\)\) \\
			\hspace{1em} + 4 \sin\frac{\sqrt{z}}{2} \cos\(\sqrt{z}x\) \cos\(\frac{\sqrt{z}}{2} \(1-2y\)\) \), & 0 \leq x \leq y \leq 1 \\
			
			\(-2 \sqrt{z} \cos\(\sqrt{z}y\) \, \cos\(\sqrt{z}(1-x)\) - 4 \sin\frac{\sqrt{z}}{2} \sin\(\sqrt{z}y\) \sin\(\frac{\sqrt{z}}{2}\(1-2x\)\) \\
			\hspace{1em} + 4 \sin\frac{\sqrt{z}}{2} \cos\(\sqrt{z}y\) \cos\(\frac{\sqrt{z}}{2}\(1-2x\)\) \), & 0 \leq y \leq x \leq 1   
			\end{cases} $ \\ \hline
		\end{hide}
		& $ \frac{1}{ \sqrt{z}\, \(-2 + 2 \cos\sqrt{z}+ \sqrt{z} \, \sin\sqrt{z} \)}
		\begin{cases}
		- \sqrt{z} \, \cos\(\sqrt{z}x\) \, \cos\(\sqrt{z}(1-y)\)
		+ 2 \sin\frac{\sqrt{z}}{2} \, \cos\(\frac{\sqrt{z}}{2}(1+2x-2y)\), & 0 \leq x \leq y \leq 1 \\
		- \sqrt{z} \, \cos\(\sqrt{z}y\) \, \cos\(\sqrt{z}(1-x)\)
		+ 2 \sin\frac{\sqrt{z}}{2} \, \cos\(\frac{\sqrt{z}}{2}(1+2y-2x)\) & 0 \leq y \leq x \leq 1
		\end{cases} $ \\ \hline

		Dirichlet~\cite{AGHKLT18}
		& $ \frac{1}{\sqrt{z} \, \sin\sqrt{z}}
		\begin{cases}
		\sin\(\sqrt{z}x\) \, \sin\(\sqrt{z}(1-y)\), & 0 \leq x \leq y \leq 1 \\
		\sin\(\sqrt{z}y\) \, \sin\(\sqrt{z}(1-x)\), & 0 \leq y \leq x \leq 1   
		\end{cases} $ \\ \hline
		
		Neumann
		& $ -\frac{1}{\sqrt{z} \sin\sqrt{z}}
		\begin{cases}
		\cos\(\sqrt{z}x\) \, \cos\(\sqrt{z}(1-y)\), & 0 \leq x \leq y \leq 1 \\
		\cos\(\sqrt{z}y\) \, \cos\(\sqrt{z}(1-x)\), & 0 \leq y \leq x \leq 1   
		\end{cases} $ \\ \hline    
		
		Robin
		& 
		$ \frac{1}{\(\alpha - \delta \) \, z \cos\sqrt{z} - \sqrt{z} \(z+ \alpha \delta \) \, \sin\sqrt{z} }
		\begin{cases}
		\(\sqrt{z}\, \cos \(\sqrt{z} x\) + \alpha \, \sin \(\sqrt{z} x \) \) \, \(\sqrt{z}\, \cos \(\sqrt{z} (1-y)\) - \delta \ \sin \(\sqrt{z} (1-y)\) \) & 0 \leq x \leq y \leq 1 \\
		\(\sqrt{z}\, \cos \(\sqrt{z} y\) + \alpha \, \sin \(\sqrt{z} y \) \) \, \(\sqrt{z}\, \cos \(\sqrt{z} (1-x)\) - \delta \ \sin \(\sqrt{z} (1-x)\) \) & 0 \leq y \leq x \leq 1
		\end{cases} $ \\ \hline
		
		Periodic
		& $ -\frac{1}{2\sqrt{z} \sin\frac{\sqrt{z}}{2}}
		\begin{cases}
		\cos\(\frac{\sqrt{z}}{2}(1+2x-2y)\), & 0 \leq x \leq y \leq 1 \\
		\cos\(\frac{\sqrt{z}}{2}(1+2y-2x)\), & 0 \leq y \leq x \leq 1   
		\end{cases} $ \\ \hline
		
		Anti-Periodic
		& $ \frac{1}{2 \sqrt{z} \cos\frac{\sqrt{z}}{2}}
		\begin{cases}
		\sin\(\frac{\sqrt{z}}{2}(1+2x-2y)\), & 0 \leq x \leq y \leq 1 \\
		\sin\(\frac{\sqrt{z}}{2}(1+2y-2x)\), & 0 \leq y \leq x \leq 1   
		\end{cases} $ \\ \hline
		
		Radoux                                                                             & $ -\frac{1}{\sqrt{z} \(\sqrt{z} \cos\sqrt{z} - \sin\sqrt{z} \)}
		\begin{cases}
		\sqrt{z} \sin\(\sqrt{z}x\) \, \cos\(\sqrt{z}(1-y)\) - \sin\(\sqrt{z}(1-y)\), & 0 \leq x \leq y \leq 1 \\
		\sqrt{z} \sin\(\sqrt{z}y\) \, \cos\(\sqrt{z}(1-x)\) - \sin\(\sqrt{z}(1-x)\), & 0 \leq y \leq x \leq 1   
		\end{cases} $ \\ \hline
		
		\begin{tabular}[c]{@{}c@{}}GSARC, $\Delta\ne 0$\end{tabular}
		& $ \frac{1}{\(-4 \beta z + 2z \(-\alpha+ \delta\)\cos\sqrt{z} + 2 \sqrt{z} \(z+ \beta^2+ \alpha \delta\)\sin\sqrt{z}\)}
		\begin{cases}
		-\(z+ \beta^2 + \alpha \delta\) \cos\(\sqrt{z}(1+x-y)\) + \(-z+ \beta^2 + \alpha \delta\) \cos\(\sqrt{z}(1-x-y)\) \\
		\hspace{1em} + \sqrt{z} \big(
		- 2\beta \sin\(\sqrt{z}(x-y)\)
		+ \(-\alpha + \delta\) \sin\(\sqrt{z}(1+x-y)\) \\
		\hspace{2em} + \(\alpha + \delta\) \sin\(\sqrt{z}(1-x-y)\) \big),
		& \hspace{-5em} 0 \leq x \leq y \leq 1 \\
		-\(z+ \beta^2 + \alpha \delta\) \cos\(\sqrt{z}(1+y-x)\) + \(-z+ \beta^2 + \alpha \delta\) \cos\(\sqrt{z}(1-y-x)\) \\
		\hspace{1em} + \sqrt{z} \big(
		- 2\beta \sin\(\sqrt{z}(y-x)\)
		+ \(-\alpha+ \delta\) \sin\(\sqrt{z}(1+y-x)\) \\
		\hspace{2em} + \(\alpha+ \delta\) \sin\(\sqrt{z}(1-y-x)\) \big),
		& \hspace{-5em} 0 \leq y \leq x \leq 1   
		\end{cases} $ \\ \hline
	\end{tabular}
	\label{tab:Resolvent}
\end{table}

\begin{table}[tbhp]
	\footnotesize
	\caption{Green's function $G(x,y)$ for various BCs; see \cref{Appendix:A} and \cref{Appendix:C} for the formulas of GSARC and GSARS with $\Delta=0$}
	\begin{tabular}{|c|c|}\hline
		\textbf{Problem}
		& \textbf{\begin{tabular}[c]{@{}c@{}} Green's function \end{tabular}} \\ \hline \hline
		Nonlocal (Saito)
		& $ -\frac{1}{2} |x-y| $ \\ \hline
		
		Kre\u{\i}n-von Neumann
		& $-\frac{1}{2} |x-y| +\frac{2}{15}- \frac{3}{5} \(x+y\) + 2   \(x^2+y^2\) +\frac{6}{5} x y - 3 x y \(x+y\) - \(x^3+y^3 \) +2 x y \(x^2+y^2\)$ \\ \hline
		
		Dirichlet~\cite{AGHKLT18}
		& $ -\frac{1}{2} |x-y| +\frac{1}{2} \left(x+y\right) - x y $ \\ \hline
		
		Neumann
		& $-\frac{1}{2} |x-y| +\frac{1}{3} - \frac{1}{2} \left(x+y\right)+ \frac{1}{2} \left(x^2+y^2\right) $ \\ \hline
		
		Robin
		& $ -\frac{1}{2} |x-y| -\frac{1}{2 \(\delta - \alpha + \alpha \delta\)} \, \(2 - 2 \delta + \(\alpha+\delta - \alpha \delta\)  \(x+y\)  + 2 \alpha \delta x y \) $ \\ \hline
		
		Periodic
		& $ -\frac{1}{2} |x-y| +\frac{1}{12}+\frac{1}{2} \(x-y\)^2$\\ \hline
		
		Anti-Periodic
		&  $ -\frac{1}{2} |x-y| +\frac{1}{4}$ \\ \hline
		
		Radoux
		& $ -\frac{1}{2} |x-y| +\frac{1}{2} \(x+y\) - \frac{9}{5}x y + \frac{1}{2} x y \(x^2+y^2\) $ \\ \hline
		
		\begin{tabular}[c]{@{}c@{}} GSARC, $\Delta\neq 0$ \end{tabular}
		& $  -\frac{1}{2} |x-y| +\frac{1}{ \beta^2 + \alpha \delta-\alpha+ 2 \beta+ \delta} \left( \(-1+\delta\) +\frac{1}{2} \(\beta^2+ \alpha \delta  -\alpha - \delta \) \(x+y\) - \(\beta^2+ \alpha \delta \) x y \right) $ \\ \hline
\begin{tabular}[c]{@{}c@{}}GSARS, $\Delta\ne 0$\end{tabular}
		&  $ -\frac{1}{2} |x-y|-\frac{\left(\cos\theta_0 \cos \theta_1 + \sin (\theta_0+\theta_1)\right) (x+y) - 2 x y \cos \theta_0 \cos \theta_1 }{\cos (\theta_0 - \theta_1) + \cos (\theta_0+\theta_1) - 2 \sin (\theta_0 - \theta_1)}$ \\ \hline	\end{tabular}
	\label{tab:Green}
\end{table}

\subsection{The Nonlocal Boundary Conditions \texorpdfstring{\cref{eq:Saito-Operator2}}{}} 

This is the eigenvalue problem \cref{eq:Saito-Operator2}, which corresponds to the BVP \cref{eq:general} with 
\begin{equation*}
C = \left( {\begin{array}{cc}
	-1 & -1 \\
	1 & 1 \\
	\end{array} }\right)
\end{equation*}
with discriminant $\Delta=4.$ Since $\lambda = 0$ is not an eigenvalue, its Riesz projection $P \equiv 0$. To obtain the expression of the resolvent kernel in \cref{tab:Resolvent}, we first fix $0\le y\le 1$, and let
\begin{equation} \label{eq:form-soln}
u(x,y) := 	\begin{cases}
u_1(x,y) & 0 \leq x \leq y \leq 1 \\
u_2(x,y) & 0 \leq y \leq x \leq 1
\end{cases}
\end{equation}
be a solution of the Helmholtz equation 
\begin{equation} \label{eq:Helm}
u''+ z \, u=-\delta(x-y)
\end{equation}
with $'$ denote the derivative $\dif{}{x}$. 
Here, for simplicity, $G(z, x, y)=u(x,y)$, suppressing the dependence on $z$ in the expressions of $u_1$ and $u_2$. 
We now impose the BCs
\begin{equation}\label{eq:S-BC}
\begin{split}
u_1'(0,y) &= -u_1(0,y)- u_2(1,y) \\
u_2'(1,y) &= u_1(0,y)+ u_2(1,y)  \\
u_1(y^{-},y) &= u_2(y^{+},y) \\
u_2'(y^{+},y)-u_1'(y^{-},y) &= -1
\end{split}
\end{equation}
The last two equations of \cref{eq:S-BC} reflect, respectively, the continuity and jump condition at $y$ (obtained by integrating \cref{eq:Helm}). 
It is a common strategy~\cite{STRAUSS} to set
\begin{equation} \label{eq:form-soln2}
\begin{split}
u_1(x,y) &= a_1 \cos \(\sqrt{z} x\) + a_2 \sin \(\sqrt{z} x\), \quad 0 \leq x \leq y \leq 1 ; \\  
u_2(x,y) &= b_1 \cos \(\sqrt{z} x\) + b_2 \sin \(\sqrt{z} x\),  \quad 0 \leq y \leq x \leq 1 ,
\end{split}
\end{equation}
and then plug \cref{eq:form-soln2} into \cref{eq:S-BC} to determine $a_1, a_2, b_1$, and $b_2$. These then lead to a solution $G(z,x,y)$, which depends on the spectral parameter $z$, as well as $x$ and $y$: 
\begin{equation} \label{eq:Gz-saito}
G(z,x,y)= \frac{- \sqrt{z} \, \cos\(\sqrt{z}x\) \, \cos\(\sqrt{z}(1-y)\)
	+ 2 \cos\frac{\sqrt{z}}{2} \, \sin\(\frac{\sqrt{z}}{2}(1+2x-2y)\)}{ \sqrt{z}\, \(2+ 2 \cos\sqrt{z}+ \sqrt{z} \, \sin\sqrt{z} \)},
\end{equation}
for $0 \leq x \leq y \leq 1$, and by symmetry, the expression $G(z,y,x)$ when $0 \leq y \leq x \leq 1$. The poles of $G(z,x,y)$ as a function of $z$ are exactly the eigenvalues of our differential operator. Note that $G_0(x,y) = \lim_{z\to 0^{+}} \, G(z,x,y)=\frac{1}{2} (x-y)$, for $0 \leq x \leq y \leq 1$, thus recovering the expression of the integral operator \cref{eq:Saito-Operator}. Here, we use the convention of Kato \cite{KATO}, for $0\le y\le x\le 1$, i.e., $G_0(x,y) = \frac{1}{2}(y-x)$ by symmetry. The full expressions for all BVPs considered in this paper are listed in \cref{tab:Resolvent,tab:Green}. 
Explicit expressions for the kernel of the Riesz projection, $p(x,y)$, are displayed in \cref{tab:RieszProj} for various conditions.

\begin{table}[tbhp]
	\footnotesize
	\caption{Riesz Projection kernel for various BCs; the case of GSARC, $\Delta=0$, $\alpha=-1$ reduces to a generalized Kre\u{\i}n-von Neumann problem (with $\delta$ as a free parameter); see \cref{rk:III.8} for further details}
	\begin{center}
		\begin{tabular}{|c|c|}\hline
			\textbf{Problem}
			& \textbf{\begin{tabular}[c]{@{}c@{}} Riesz Projection Kernel \end{tabular}} \\ \hline \hline
			Nonlocal (Saito)
			& $0$ (None) \\ \hline
			
			Kre\u{\i}n-von Neumann
			& $ -4 + 6 x+ 6 y - 12 x y  $ \\ \hline
			
			Dirichlet~\cite{AGHKLT18}
			& $0$ (None)  \\ \hline
			
			Neumann
			& $-1 $ \\ \hline
			
			Robin
			& $0$ (None) \\ \hline
			
			Periodic
			& $-1$ \\ \hline
			
			Anti-Periodic
			&  $0$ (None) \\ \hline
			
			Radoux
			& $ - 3 x y $ \\ \hline
			
			\begin{tabular}[c]{@{}c@{}} GSARC, $\Delta \ne 0$\end{tabular}
			& $0$ (None) \\ \hline
			
			\begin{tabular}[c]{@{}c@{}} GSARC, $\Delta=0$ and $\alpha \neq -1$ \end{tabular}
			& $ \frac{- 3 \,  \left(1-\beta + (\alpha +\beta) x\right) \, \left(1-\beta + (\alpha +\beta) y\right)}{3+ \alpha^2+\beta^2+ 3 \alpha- 3 \beta - \alpha \beta}$  \\ \hline	
   \begin{tabular}[c]{@{}c@{}}GSARS, $\Delta\ne 0$\end{tabular}
		&   $0$ (None) \\ \hline
   \begin{tabular}[c]{@{}c@{}}GSARS, $\Delta= 0$\end{tabular}
		&   $\frac{6 \left(x \cos \theta_0 - \sin \theta_0\right) \, \left(y \cos \theta_0 - \sin \theta_0\right)}{-4 + 2 \cos 2 \theta_0+ 3 \sin 2 \theta_0}$ \\ \hline
		\end{tabular}
	\end{center}
	\label{tab:RieszProj}
\end{table}

\subsection{Kre\u{\i}n-von Neumann Boundary Conditions}
\label{sec:KvN Resolvent}
This problem received special attention in the literature, both in one dimension, and in higher dimensions, \cite{AlonsoSimon80,AGHKLT19,AGHKLT18,AGLMS2017,AGMST2010,AGMST2013}, partially in connection with work on self-adjoint extensions of the operator $\mathcal{T_{\infty}}=-\difh{2}{}{x}$. Note that the Kre\u{\i}n-von Neumann extension is the smallest (or soft) positive self-adjoint extension while the Friedrichs extension is the largest (or hard) positive self-adjoint extension. For the definitions and relevant characterizations we refer the reader to the cited literature as well as \cite{SCHMUEDGEN}.

In one dimension, it corresponds to the problem \cref{eq:general} with 
\begin{equation*}
C = \left( {\begin{array}{cc}
	-1 & 1 \\
	-1 &1 \\
	\end{array} }\right)
\end{equation*}
with discriminant $\Delta=0.$ Strauss \cite[pg.~101, Exercise 12; pg.~145, Exercise 4]{STRAUSS} calls this problem an ``unusual'' eigenvalue problem. We owe this remark to M.~Ashbaugh who brought it to our attention. Solving \cref{eq:Helm} for $u$ given by \cref{eq:form-soln} (and \cref{eq:form-soln2}), and imposing the boundary, continuity, and jump conditions
\begin{equation}\label{eq:S-BC2}
\begin{split}
u_1'(0,y) &=-u_1(0,y)+ u_2(1,y) \\
u_2'(1,y) &=-u_1(0,y)+ u_2(1,y) \\
u_1(y^{-},y) &= u_2(y^{+},y) \\
u_2'(y^{+},y)-u_1'(y^{-},y) &=-1
\end{split}
\end{equation}
gives
\begin{hide}
	\small{
		\begin{align*} 
		G(z,x,y)=\frac{-2 \sqrt{z} \cos\(\sqrt{z}x\) \, \cos\(\sqrt{z}(1-y)\) - 4 \sin\frac{\sqrt{z}}{2} \sin\(\sqrt{z}x\) \sin\(\frac{\sqrt{z}}{2}\(1-2y\)\) + 4 \sin\frac{\sqrt{z}}{2} \cos\(\sqrt{z}x\) \cos\(\frac{\sqrt{z}}{2} \(1-2y\)\)}{4 \sqrt{z} \( \sqrt{z} \cos\frac{\sqrt{z}}{2} - 2 \sin\frac{\sqrt{z}}{2} \) \sin\frac{\sqrt{z}}{2}}
		\end{align*}
	}
\end{hide}
\begin{align*} 
G(z,x,y)=\frac{- \sqrt{z} \cos\(\sqrt{z}x\) \, \cos\(\sqrt{z}(1-y)\) + 2 \sin\frac{\sqrt{z}}{2} \cos\(\frac{\sqrt{z}}{2}(1+2x-2y)\)}{ \sqrt{z} \( -2 + 2\cos\sqrt{z} + \sqrt{z} \sin\sqrt{z}\)}
\end{align*}
for $0 \leq x \leq y \leq 1$, and by symmetry, the expression $G(z,y,x)$ when $0 \leq y \leq x \leq 1$. The kernel of the Riesz projection is given by 
\begin{equation}\label{eq:RiezKvN}
p(x,y) := \lim_{z\to 0^+} z \, G(z, x,y) = -4 + 6 x+ 6 y - 12 x y.
\end{equation} 
The quasi-nilpotent operator $D$ defined in \cref{eq:nilpotent} is null in this case because $$\lim_{z\to 0^{+}} z^2 \, G(z, x,y)=0.$$ 
The Green's function corresponding to this problem is obtained by finding the limit 
\begin{align*} \label{eq:integral-KvN2}
G_0(x,y) &=\lim_{z\to 0^{+}} \, G(z,x,y) - \frac{p(x,y)}{z} \\ 
&= \frac{1}{30} \big(
4 - 3 x - 33 y+60 x^2+ 60 y^2 + 36 xy 
-30 x^3-90 x^2 y - 90 x y^2- 30 y^3 \\[0.5em]
&  \hspace{1em} + \, 60 x^3 y + 60 x y^3\big)
\end{align*}
for $0\le x\le y\le 1$. 
A unified expression, valid for all $x, y \in[0,1]$, and relating to the nonlocal operator appears in \cref{tab:Green}.

\subsection{Dirichlet Boundary Conditions}
\label{sec:Dirichlet Resolvent}
This example is found in many sources, e.g., \cite{Dikii61,GohbergKrein1969,KATO,PorterStirling,SAITO-LAPEIG-ACHA,CavFassMcC15,AGHKLT18}.
It is the easiest as it captures the essence of many features of GSARC problems. 
As a GSARC problem, this is the limiting case when $\alpha \to \infty$, $\beta=-\gamma =0$, and $\delta\to \infty$ (e.g., $\alpha=\delta\to \infty$, thus $\Delta\to \infty$. ).  

Solving \cref{eq:Helm} for $u$ given by \cref{eq:form-soln} (and \cref{eq:form-soln2}), and imposing the boundary, continuity, and jump conditions
\begin{equation}\label{eq:S-BC3}
\begin{split}
u_1(0,y) &= 0 \\
u_2(1,y) &= 0 \\
u_1(y^{-},y) &= u_2(y^{+},y) \\
u_2'(y^{+},y)-u_1'(y^{-},y) &= -1
\end{split}
\end{equation}
yields
\begin{align*} 
G(z,x,y)=\frac{\sin\(\sqrt{z}x\) \, \sin\(\sqrt{z}(1-y)\)}{\sqrt{z} \, \sin\sqrt{z}} .
\end{align*}
Note that $\lim_{z\to 0^{+}} z\, G(z,x,y) = 0$. Thus the Riesz projection $P\equiv 0$ and $G(z,x,y)$ admits a pure Taylor series at $z=0$, from which we get 
\begin{align*}
G_0(x,y)=\lim_{z\to 0^{+}}  G(z,x,y) =x (1-y),
\end{align*}
for $0\le x\le y\le 1$, and the symmetric expression for  $0\le y\le x\le 1$. This fundamental solution assumes the unified form
\begin{equation*}
    G_0(x,y)=-\frac{1}{2} |x-y| +\frac{1}{2} \left(x+y\right) - x y,
\end{equation*}
valid for all $x, y \in[0,1]$. The resolvent and Green's function are tabulated in \cref{tab:Resolvent,tab:Green} (see also \cite{Hellwig,KATO}).

\subsection{Neumann Boundary Conditions}
\label{sec:Neumann Resolvent}
The expression for the Green's function $G_0(x,y)$ corresponding to this eigenvalue problem appear, for example, in \cite{PorterStirling,SAITO-LAPEIG-ACHA}. To obtain the resolvent kernel and Green's function expressions in \cref{tab:Resolvent,tab:Green}, we proceed as before, with the associated GSARC matrix $C\equiv 0$, and the discriminant $\Delta=0.$ For $0\le x\le y\le 1$, the resolvent kernel is explicitly given by 
\begin{align*} 
G(z,x,y)=-\frac{\cos\(\sqrt{z} x\) \, \cos\(\sqrt{z}(1-y)\) }{\sqrt{z} \sin\sqrt{z}} .
\end{align*}
The kernel of Riesz projection $P$ is given by $p(x,y)=\lim_{z\to 0^{+}} z \, G(z, x,y)=-1$, while the nilpotent operator $D\equiv 0$ (since $\lim_{z\to 0^{+}} z^2 \, G(z, x,y)=0$). The singularity $z=0$ corresponds to a simple eigenvalue of the problem \cref{eq:general}. The Green's function corresponding to this problem is then  
\begin{align*}
G_0(x,y) &= \lim_{z\to 0^{+}} \, G(z,x,y) - \frac{p(x,y)}{z} \\ 
&= \frac{1}{6} 
\(2 - 6 y+ 3 x^2 + 3 y^2\),
\end{align*}
for $0\le x\le y\le 1$. This agrees with the expressions in \cite{PorterStirling,SAITO-LAPEIG-ACHA}.

\subsection{Robin Boundary Conditions}
\label{sec:Robin Resolvent}
In the Robin boundary problem (named after the French mathematician Victor Gustave Robin; see, e.g., \cite{ROBIN1, ROBIN2}), we still deal with the eigenvalue problem \cref{eq:general} but now subject to (the separated) conditions where the flux at the boundary is proportional to its value, i.e., 
$u'(0) = \alpha u(0)$ and $u'(1) =\delta u(1)$. 
In the recent literature the problem is sometimes called the ``Generalized Robin Problem''  \cite{GMGeneralizedRobin2008}.
We will first focus on the general case, then treat particular cases which received special attention in the literature \cite{KATO,StakgoldGreenBook}. The discriminant is given by $\Delta=\delta-\alpha  + \alpha \, \delta$. We will treat the case when $\Delta \neq 0$, relegating the null case to the most general setting.  We solve for the resolvent kernel as before, to get, for $0\le x \le y \le 1$
\begin{align*} 
G(z,x,y)=\frac{\(\sqrt{z}\, \cos \(\sqrt{z} x\) + \alpha \, \sin \(\sqrt{z} x \) \) \, \(\sqrt{z}\, \cos \(\sqrt{z} (1-y)\) - \delta \ \sin \(\sqrt{z} (1-y)\) \)}{\(\alpha - \delta \) \, z \cos\sqrt{z} - \sqrt{z} \(z+ \alpha \delta \) \, \sin\sqrt{z} },
\end{align*}
and the symmetric expression $G(z, y,x)$ for $0\le y\le x \le 1$. \\
Since $\lim_{z\to 0^{+}} z \, G(z, x,y)=0$, the Riesz projection $P$ and nilpotent operator $D$ are identically equal to zero. 
For $0\le x \le y \le 1$, the Green's function corresponding to this problem is given by  
\begin{align*}
G_0(x,y) &= \lim_{z\to 0^{+}} \, G(z,x,y) \notag\\ 
&= - \frac{\( 1+ \alpha x\) \(1- \delta (1-y)\)}{\delta- \alpha + \alpha \delta},
\end{align*}
and the symmetric expression $G_0(y,x)$ for $0\le y\le x\le 1$. 
\begin{example} \label{rk:3.2}
	One of the simplest cases of the Robin BCs is:
	$u(0)=u'(1)=0$, i.e., $\alpha \to \infty$ and $\delta \to 0$, $\Delta\to -\infty$, corresponding to Dirichlet BC at one end, and free Neumann BC at the other end.
	For $0\le x\le y\le 1$, the resolvent kernel is explicitly given by 
	\begin{align*} 
	G(z,x,y)=
	\frac{\sin(\sqrt{z} x) \cos\(\sqrt{z} (1-y)\)}
	{\sqrt{z} \cos\sqrt{z}} .
	\end{align*}
	The Green's function corresponding to this problem is then  
	\begin{equation*}
	G_0(x,y) = \lim_{z\to 0^{+}} \, G(z,x,y) =x  \quad \text{for $0\le x\le y\le 1$}, 
	\end{equation*}
	and 
	the symmetric expression $G_0(y,x)$ for $0 \leq y \leq x \leq 1$.
	\qed
\end{example}


\begin{example}\label{rk:3.3} 
	Another example appears in Kato's work as Example~4.14~\cite[p.~293]{KATO} where the BCs are given by $u(0)=0$ and $\tau u'(1)- u(1)=0$, and $0<\tau <1$ is a fixed parameter (our $\tau$ is the $\kappa$ in Kato's book).  Note that $\alpha \to \infty$ and $\delta =1/\tau$ in this example.
	
	Hence,
	\begin{equation*} 
	G(z,x,y)=
	\frac{\sin\(\sqrt{z} x\)}{\tau \, z \cos\sqrt{z} - \sqrt{z} \sin\sqrt{z}}   \, \big( \tau \, \sqrt{z} \, \cos(\sqrt{z} (1-y))  - \sin\(\sqrt{z} (1-y)\)\big).
	\end{equation*}
	This expression agrees with the result in Kato's book. 
	For $0\le x\le y\le 1$, the Green's function corresponding to this problem 
	\begin{equation*}
	G_0(x,y) = \lim_{z\to 0^{+}} \, G(z,x,y) = x - \frac{1}{1 - \tau} xy 
	\end{equation*}
	and the symmetric expression for $0\le y\le x\le 1$. We note that this problem admits a unique negative eigenvalue; see \cite{KATO}. 
	
	\qed
\end{example}

\begin{example} \label{rk:3.4}
	Another variation of this Robin problem appears as Example~1.4~\cite[p.~367]{KATO} where the BCs are replaced with $u(0)=0$ and $u'(1)+ \tau u(1)=0$, i.e., $\alpha \to \infty$ and $\delta=-1/\tau$ in the general setting. Hence, we have
	\begin{align*} 
	G(z,x,y)=
	\frac{\sin\(\sqrt{z} x\)  \, \(\sqrt{z}\, \cos\(\sqrt{z} (1-y)\) + \tau \,  \sin \(\sqrt{z} (1-y)\) \)}{z \cos\sqrt{z} + \tau \, \sqrt{z} \sin\sqrt{z} }.
	\end{align*}
	for $0\le x\le y\le 1$. This expression is also equivalent to Kato \cite{KATO}. The Green's function corresponding to this problem is given by
	\begin{equation} \label{eq:integral-Kato4.14}
	G_0(x,y) = \lim_{z\to 0^{+}} \, G(z,x,y) = x - \frac{\tau}{1+\tau}x y \quad \text{for $0\le x\le y\le 1$}.
	\end{equation}
	Unlike \cref{rk:3.3}, this problem does not exhibit negative eigenvalues.
	\qed
\end{example}
\begin{example} \label{rk:3.5}
	Yet another variation of this Robin eigenvalue problem also appears in the work of Stakgold and Holst \cite[Example 1, pp.416--420]{StakgoldGreenBook}, with BCs of the form: $u(0) = 0$, and $u'(1)\sin \theta + u(1) \cos \theta =0$. 
	This is equivalent to 
	\cref{rk:3.4} with $\tau= \cot \theta$ (or $\alpha \to \infty$ and $\delta=-\tan \theta$). 
	
	For $0\le x\le y\le 1$, and in terms of the angle $\theta$, the resolvent kernel takes the form
	\begin{align*} 
	G(z,x,y)=
	\frac{\sin\(\sqrt{z} x\) \, \(\sqrt{z}\,  \sin\theta \, \cos\(\sqrt{z} (1-y)\) 
		+   \cos \theta  \sin \(\sqrt{z} (1-y)\)\)}{z\, \sin\theta \, \cos \sqrt{z} + \sqrt{z} \cos \theta \, \sin \sqrt{z}} 
	\end{align*}
	while, the Green's function corresponding to the integral operator is then given by
	\begin{equation} \label{eq:integral-Stakgold}
	G_0(x,y):=\lim_{z\to 0^{+}} \, G(z,x,y) = x - \frac{1}{1+\tan\theta}x y.
	\end{equation}
	These expressions are equivalent to those appearing in \cite{StakgoldGreenBook}. 
	\qed
\end{example}

\begin{remark} \label{rk:III.6} For $\delta-\alpha+ \alpha \delta =0$, i.e., $\delta=\frac{\alpha}{\alpha+1}$ or $\alpha = \frac{\delta}{1-\delta}$, the Riesz projection $P$ has a kernel given by
\begin{align*}
p(x,y)=-\frac{3 \left(1 + \alpha x \right) \, \left(1 + \alpha y \right)}{3 +3 \alpha +\alpha^2}
\end{align*}
while the nilpotent operator $D\equiv 0$. 
The Green's function corresponding to this problem is then  
\begin{align*}
G_0(x,y) &= \lim_{z\to 0^{+}} \, G(z,x,y) - \frac{p(x,y)}{z} \notag\\ 
&=  
c_{00} + c_{10} x + c_{01} y + c_{20} x^2 + c_{11} x y + c_{02} y^2 + c_{30} x^3 + c_{21} x^2 y
\\
& 	\hspace{1em} + c_{12} x y^2 + c_{03} y^3 +c_{31} x^3 y+ c_{13} x y^3
\end{align*}
for $0\le x\le y\le 1$, where the coefficients are explicitly given in \cref{Appendix:A}, with $\beta=0$. The expression $3 +3 \alpha +\alpha^2>0$ (its discriminant is negative). It appears in the expression of the Riesz projection kernel, and in all the expressions of the $c_{ij}$ coefficients. The Green's function $G_0(x,y)$ is a fourth-order polynomial in $x$ and $y$. 
\end{remark}

\subsection{Periodic Boundary Conditions}
\label{sec:Periodic Resolvent}
Historically, this is a well-studied eigenvalue pro-blem, due to its tight connection to Fourier series \cite{Freeden2011}. Although it is not of GSARC type, we include it for comparison sake with the work of \cite{FGKLNS2021}. In lieu of the BCs in \cref{eq:general}, we treat the eigenvalue problem with $u(0)=u(1)$ and $u'(0)=u'(1)$.  

Solving the Helmholtz equation \cref{eq:Helm} for $u$ with these BCs, and imposing the continuity and jump conditions as before, leads, 
for $0\le x\le y\le 1$, to 	
\begin{align*} 
G(z,x,y)=-\frac{\cos\(\frac{\sqrt{z}}{2}(1+2x-2y)\)}{2\sqrt{z} \sin\frac{\sqrt{z}}{2}}. 
\end{align*}
The kernel of Riesz projection $P$ is given by $p(x,y)=\lim_{z\to 0^{+}} z \, G(z, x,y)=-1$. 
The nilpotent operator $D\equiv 0$ since \\ $\lim_{z\to 0^{+}} z^2 \, G(z, x,y)=0$. 

The Green's function corresponding to this periodic case is then  
\begin{align*}
G_0(x,y) &=\lim_{z\to 0^{+}} \, G(z,x,y) - \frac{p(x,y)}{z} \notag\\ 
&= \frac{1}{12} 
\(1+ 6 x - 6 y + 6 x^2 - 12 x y + 6 y^2\),
\end{align*}
for $0\le x\le y\le 1$, and the corresponding expression by symmetry for $0\le y\le x\le 1$.

\subsection{Anti-Periodic Boundary Conditions}
\label{sec:Anti-Periodic Resolvent}
This is one of the eigenvalue problems revisited in the recent article of Fucci, \textit{et al.} \cite{FGKLNS2021}, though not an GSARC problem. The BCs are given by  
$u(0)=- u(1)$ and $u'(0)=-u'(1)$. The resolvent kernel ensuing from the proposed technique herein is given, for $0\le x\le y\le 1$, by 	
\begin{align*} 
G(z,x,y)=\frac{\sin\(\frac{\sqrt{z}}{2} (1+2x-2y)\)}{2 \sqrt{z} \cos\frac{\sqrt{z}}{2}}		
\end{align*}
The kernel of Riesz projection $P\equiv 0$ since $p(x,y)=\lim_{z\to 0^{+}} z \, G(z, x,y)=0$, and thus the nilpotent operator $D\equiv 0$.

The Green's function corresponding to the anti-periodic case is then  
\begin{align*}
G_0(x,y)=\lim_{z\to 0^{+}} \, G(z,x,y) = \frac{1}{4} \(1+ 2 x - 2y\)
\end{align*}
for $0\le x\le y\le 1$, and the corresponding expression by symmetry for $0\le y\le x\le 1$.

\subsection{Radoux Boundary Conditions}
\label{sec:Radoux Resolvent}
The Radoux eigenvalue problem \cite{RADOUX} appears in the literature in the context of obtaining sum rules for the roots for the transcendental equation $\tan x = x$. Sum rules are closed form expressions for the zeta function of these roots with a history that is traceable back to Rayleigh and others; see \cite{HS-ACHA18}. Radoux used spectral techniques for solving 
\cref{eq:general} with the BCs $u(0)=0$, $u'(1)=u(1)$ to find these sum rules. This eigenvalue problem is a limiting case of \cref{sec:Robin Resolvent} 
when $\alpha \to \infty$, and $\delta =1$, (or when $\tau \to 1$ in \cref{rk:3.3}). The discriminant $\Delta \to \infty$ and the formulas of \cref{sec:Robin Resolvent} are no longer valid. 

While the resolvent kernel is recoverable from this limit, the Green's function is not. Proceeding as before, we obtain  
\begin{align*} 
G(z,x,y)=
\frac{\sin\(\sqrt{z} x\) \, \(\sqrt{z}\, \cos\(\sqrt{z} (1-y)\) - \sin\(\sqrt{z} (1-y)\)\)}{z \cos\sqrt{z} - \sqrt{z} \sin\sqrt{z} }  .
\end{align*}
The kernel of Riesz projection $P$ is given by $p(x,y)=\lim_{z\to 0^{+}} z \, G(z, x,y)=- 3 x \, y $, and the nilpotent operator $D\equiv 0$ since $\lim_{z\to 0^{+}} z^2 \, G(z, x,y)=0$. For $0\le x\le y\le 1$, the Green's function is then  
\begin{equation} \label{eq:integral-Radoux}
G_0(x,y) = \lim_{z\to 0^{+}} \, G(z,x,y) -\frac{p(x,y)}{z} = \frac{1}{10} \, \(- 10 x+ 18 xy - 5 x^3 y - 5 x y^3\). 
\end{equation}

\subsection{GSARC Boundary Conditions with Non-Zero Discriminant} \label{sec:GSARC with Non-Zero Discriminant Resolvent}

We now focus on the general problem \cref{eq:general}, with $\Delta=\delta-\alpha- 2 \beta + \beta^2 + \alpha \delta \neq 0$. The technique described yields a resolvent kernel where 

The expression for 
\begin{align*}
\(-4 \beta z + 2z \(-\alpha+ \delta\)\cos\sqrt{z} + 2 \sqrt{z} \(z+ \beta^2+ \alpha \delta\)\sin\sqrt{z}\) \, G(z,x,y)
\end{align*} 
is given by 
\begin{align*}
& -\(z+ \beta^2 + \alpha \delta\) \cos\(\sqrt{z}(1+x-y)\) + \(-z+ \beta^2 + \alpha \delta\) \cos\(\sqrt{z}(1-x-y)\) \\
& \hspace{1em} + \sqrt{z} \big(- 2\beta \sin\(\sqrt{z}(x-y)\)
+ \(-\alpha + \delta\) \sin\(\sqrt{z}(1+x-y)\) \\
& \hspace{2em} + \(\alpha + \delta\) \sin\(\sqrt{z}(1-x-y)\) \big)
\end{align*} 
for $0\le x\le y\le 1$, and thus by symmetry for full expression in  \cref{tab:Resolvent}. For $0\le x\le y\le 1$, the kernel of Riesz projection $P \equiv 0 \equiv D$ since $\lim_{z\to 0^{+}} z^n \, G(z, x,y)=0$, for $n=1,2$. The Green's function is then
\begin{align*}
G_0(x,y) &= \lim_{z\to 0^{+}} \, G(z, x,y) \\ 
&= \frac{-1+\delta - \(\alpha + \beta\) x + \(\beta-\delta\) y+ \(\beta^2+ \alpha \delta \) x (1-y)}{\delta-\alpha- 2 \beta + \beta^2 + \alpha \delta}
\end{align*} 
for $0\le x \le y \le 1$, and the symmetric expression $G_0(y,x)$ for  $0\le  y \le x \le 1$.


\subsection{GSARC Boundary Conditions with Zero Discriminant}
\label{sec:GSARC with Zero Discriminant  Resolvent}
We now discuss the case when the discriminant $\Delta=\delta-\alpha- 2 \beta + \beta^2 + \alpha \delta = 0$. We first discuss the generic case, with
$\delta=\frac{\alpha + 2 \beta - \beta^2}{\alpha +1}$ (equivalently
$\alpha=\frac{\delta - 2 \beta + \beta^2}{1-\delta}$), then discuss the three limiting cases: (i)
$\alpha \to -1$ (so that $\beta=1$, and $\delta$ is arbitrary); (ii) $\delta \to 1$ (so that $\beta=1$, and $\alpha$ is arbitrary); and (iii) the special case satisfying both (i) and (ii), i.e., $\alpha = -1$, $\beta = 1$, $\delta=1$. 

The expression of the resolvent kernel following from the same method is such that 
\begin{align*} 
2 z \(-2 (1+ \alpha) \beta -  \, (\alpha^2+ \beta^2 - 2 \beta)\, \cos\sqrt{z} +  \, \((1+ \alpha) z + (\alpha+ \beta)^2\) \sin\sqrt{z}
\)	G(z,x,y)
\end{align*}
is equal to 
\begin{hide}
	\begin{align*} 
	& & - \( (1+\alpha) z+ (\alpha + \beta)^2 \) \cos\( \sqrt{z} (1+x-y)\)	+\(-(1+\alpha) z +(\alpha + \beta)^2\) \, \cos \(\sqrt{z} (1-x-y)\) \\
	& &	\hspace{-1em} + \sqrt{z} \(-2 (1+\alpha) \, \beta \sin \(\sqrt{z}(x-y) \) 
	-(\alpha^2+\beta^2 - 2 \beta) \sin \(\sqrt{z} (1+x-y)\) + (2+\alpha - \beta) (\alpha+\beta) \sin \(\sqrt{z} (1-x-y)\) 
	\)
	\end{align*}
\end{hide}
\begin{align*}
& - \( (1+\alpha) z+ (\alpha + \beta)^2 \) \cos\( \sqrt{z} (1+x-y)\)	+\(-(1+\alpha) z +(\alpha + \beta)^2\) \, \cos \(\sqrt{z} (1-x-y)\) \\
& \hspace{1em}  + \sqrt{z} \bigg\{ -2 (1+\alpha) \, \beta \sin \(\sqrt{z}(x-y) \) 
-(\alpha^2+\beta^2 - 2 \beta) \sin \(\sqrt{z} (1+x-y)\) \\
& \hspace{2em} + (2+\alpha - \beta) (\alpha+\beta) \sin \(\sqrt{z} (1-x-y)\)  \bigg\}
\end{align*}
Upon calculating $\lim_{z\to 0^{+}} z^n \, G(z, x,y)$ for $n=1, 2,\ldots$, we realize that the kernel of the Riesz projection is given by 
\begin{align*}
p(x,y)=-\frac{3 \left(1-\beta + \left(\alpha+\beta \right) x \right) \, \left(1-\beta + \left(\alpha+\beta \right) y \right)}{3+\alpha^2 + \beta^2 - \alpha \beta +3 \alpha - 3 \beta}
\end{align*}
while the nilpotent operator $D\equiv 0$. 

The Green's function corresponding to this problem is then  
\begin{align*}
G_0(x,y) &= \lim_{z\to 0^{+}} \, G(z,x,y) - \frac{p(x,y)}{z} \notag\\ 
&=  
c_{00} + c_{10} x + c_{01} y + c_{20} x^2 + c_{11} x y + c_{02} y^2 + c_{30} x^3 + c_{21} x^2 y
\\
& 	\hspace{1em} + c_{12} x y^2 + c_{03} y^3 +c_{31} x^3 y+ c_{13} x y^3
\end{align*}
for $0\le x\le y\le 1$. 
The coefficients depend on the parameters and are explicitly given in \cref{Appendix:A}.

We opted not to include the expressions for $G(z,x,y)$ and $G_0(x,y)$ in \cref{tab:Resolvent} and \cref{tab:Green} since the expressions are too long. 

\begin{remark} \label{rk3.4}
	Note that $c_{10}- c_{01}=1$, $c_{20}=c_{02}$, $c_{21}=c_{12}$, $c_{30}=c_{03}$, and $c_{31}=c_{13}$. These identities will prove useful in the calculation of the perturbation in \cref{sec:Perturb GSARC with Non-Zero Discriminant}.
	\qed
\end{remark}

\begin{remark} \label{rk:III.8}
	
	We now treat the limiting cases (i), (ii), and (iii). Again, we list the results for $0\le x\le y\le 1$ and complete the expression for $0\le y\le x\le 1$ by symmetry.
	\begin{itemize}
		\item[(i)] $\alpha \to -1$ (so that $\beta=1$, and $\delta$ is arbitrary)
		
		The expression of the resolvent kernel following from the same method is such that 
		\begin{align*} 
		\(\sqrt{z} \(-2 + (1+ \delta) \cos\sqrt{z}\)
		+ (1 -\delta+z) \sin\sqrt{z}\)	G(z,x,y)
		\end{align*}
		is equal to 
		\begin{align*} 
		& & \(-\sqrt{z} \, \cos\( \sqrt{z} (1-y)\)	+ \delta \, \sin\(\sqrt{z} (1-y)\) + \sin\(\sqrt{z} y\)\) \, \cos \(\sqrt{z} x \)
		\\
		& &	\hspace{1em} + \(\sqrt{z} \, \cos\(\sqrt{z}(1-y)\) - \cos\(\sqrt{z} y\) +(1-\delta) \sin \(\sqrt{z}(1-y)\)\) 
		\, \frac{\sin\(\sqrt{z} x\)}{\sqrt{z}} .
		\end{align*}
		The Riesz projection has a kernel
		\begin{align*}
		p(x,y)=-3 (1-x) \, (1-y)
		\end{align*}	
		and the corresponding Green's function is
		\begin{align*} 
		G_0(x,y) &=-\frac{1}{20 (1-\delta)} \, 
		\big((-9+ 4 \delta) + (19- 4 \delta) x - 
		30 (1-\delta) x^2 + 10 (1-\delta) x^3
		\\
		& 	\hspace{5em} +(39 - 24 \delta) y +(-69 + 24 \delta) y^2 + 10(1-\delta) y^3 \\
		& 	\hspace{5em} +3 (-23+ 8 \delta) x y + 30 (1-\delta) x y^2 + 30 (1-\delta) x^2 y
		\\
		& 	\hspace{5em} -10 (1-\delta) x^3 y - 10 (1-\delta) x y^3\big) .
		\end{align*}
		\item[(ii)] $\delta \to 1$ (so that $\beta=1$, and $\alpha$ is arbitrary). 
		The expression of the resolvent kernel following from the same method is such that 
		\begin{align*} 
		\(\sqrt{z} \(-2 + (1- \alpha) \cos\sqrt{z}\)
		+ (1+\alpha + z) \sin\sqrt{z}\)	G(z,x,y)
		\end{align*}
		is equal to 
		\begin{align*} 
		&  \(-\sqrt{z} \, \cos\(\sqrt{z} (1-y)\) +  \sin\(\sqrt{z} (1-y)\) + \sin\(\sqrt{z} y\)\) \, \cos\(\sqrt{z} x\)
		\\
		& 	\hspace{1em} - \sqrt{z} \, \(\alpha \, \cos\(\sqrt{z}(1-y)) + \cos\(\sqrt{z} y \)\) -(1+\alpha) \sin\(\sqrt{z}(1-y)\)\) 
		\, \frac{\sin\(\sqrt{z} x\)}{\sqrt{z}} .
		\end{align*}
		The Riesz projection has a kernel
		\begin{align*}
		p(x,y)=- 3 x y 
		\end{align*}	
		and the corresponding Green's function is
		\begin{align*} 
		G_0(x,y) &=\frac{1}{20 (1+\alpha)} \, 
		\big(20 - 30 x + 10 (-1 + 2 \alpha) y 
		\\
		& 	\hspace{5em} + (9- 36 \alpha ) x y  + 10 (1+\alpha) x y^3 + 10 (1+\alpha) x^3 y\big) .
		\end{align*}
		
		\item[(iii)] $\alpha=-1$, $\beta=1$, and $\delta = 1$ 
		
		This is the most particular of all these cases.
		The expression of the resolvent kernel following from the same method is such that 
		\begin{align*} 
		\(2 \sqrt{z} (-1+\cos\sqrt{z} + z \, \sin\sqrt{z}\) G(z,x,y)
		\end{align*}
		is equal to 
		\begin{align*} 
		&  \(-\sqrt{z} \, \cos\(\sqrt{z} (1-y)\) +  \sin\(\sqrt{z} (1-y)\) + \sin\(\sqrt{z} y\)\) \, \cos \(\sqrt{z} x \)
		\\
		& \hspace{1em} -\sin \(\sqrt{z} \(\frac{1}{2}-y\) \)\,  
		\, \frac{\sin\(\sqrt{z} x\)}{2\sin\frac{\sqrt{z}}{2}} .
		\end{align*}
		The Riesz projection kernel is given by $p(x,y) = -4 + 6 x + 6 y - 12 x y$.
		The corresponding Green's function is
		\begin{align*} 
		G_0(x,y) &= \frac{1}{30}\big(4-3x + 60 x^2 - 30 x^3 - 33y + 36 x y - 90x^2 y\\ 
		&  \hspace{1em} +60 x^3 y+ 60 y^2- 90 xy^2- 30 y^3 + 60 x y^3 \big).
		\end{align*}
	\end{itemize} 
	In all cases, the nilpotent operator $D\equiv 0$. 
	\qed
\end{remark}

\section{Pertubations of the Nonlocal Operator Eq.~(I.1)}
\label{sec:perturbation}
With $\mathcal{K}$ as defined in \cref{eq:Saito-Operator}, we are interested in understanding the nature of the integral operator $\mathcal{T}:=\mathcal{K}-\mathcal{K_G}$, where
$\mathcal{K_G} f(x):=\int_0^1 G_0(x,y) f(y) \dd{y}$, $G_0(x,y)$ is one of the Green's functions developed in \cref{sec:resolvent-integral} for various GSARC problems. The perturbation $\mathcal{T}$ we aim to study is then given by
\begin{equation} \label{eq:gen-perturb}
\mathcal{T} f (x) := \int_0^1 \left(\kappa(x,y)- G_0(x,y)\right) f(y) \dd{y}, 
\end{equation}
where $\kappa(x,y)$ was defined by \cref{eq:kappa}.
The method consists of finding the eigenvalues and corresponding eigenfunctions of this operator using Linear Algebra. In each of the BVPs in this article, we proceed as follows: 
\begin{description}
	\item[Step 1] Determine the form of the kernel $T(x,y)= \kappa(x,y)- G_0(x,y)$ for $x\le y$;
	\item[Step 2] Set the kernel $T(x,y)$, for $x\ge y$, by symmetry;
	\item[Step 3] Simplify the expression of 
	\begin{equation} \label{eq:Txy}
	\int_0^1 T(x,y) f(y) \dd{y}= \int_0^x T(x,y) f(y) \dd{y} + \int_x^1 T(y,x) f(y) \dd{y};
	\end{equation}
	(This leads to the appropriate form of the function $f(x)$. 
	It turns out to be a polynomial with a degree off by 1 from the size of the matrix corresponding to the operator $\mathcal{T}$.)
	\item[Step 4] 
	Find the spectral resolution of the operator $\mathcal{T}$ by working out the spectral resolution of the corresponding matrix problem.
\end{description} 
The range of the operator $\mathcal{T}$  is a space of polynomials. If $n$ is the rank of the matrix $M_{\mathcal{T}}$ corresponding to the operator $\mathcal{T}$. 
The spectral resolution of this operator is given by
\begin{equation}
\mathcal{T} f (x) = \sum_{k=1}^n \langle f, u_k \rangle \, u_k(x)
\end{equation}
where $\langle f, g \rangle \define \int_0^1 f(x) g(x) \dd{x}$ is the usual dot product, 
and $u_1, \ldots, u_n$ are the eigenvectors of the matrix $M_{\mathcal{T}}$ 
which we determine for the various eigenvalue problems discussed earlier.

In all cases, we will show that $T(x,y)=T(y,x)$, thus the split of the integral in \cref{eq:Txy} is not needed. In fact the resolved kernel associated with $\mathcal{T}$ is symmetric as well in $x$ and $y$.

\begin{remark}
	Perturbation is treated in a multiplicative framework in Section 2.8 of  \cite{BehrndtHassideSnoo2020} where the finiteness of the rank of the perturbation implies the existence of boundary triplets (see Theorem 2.8.1). 
	Kre\u{\i}n-type resolvent formulas (and thus Green functions) appear in
	\cite[Sec.4.3, 7.5, 11.3, 13.10, 14.13, Appendix D.6]{GNZ-AMSBook2023}.
\end{remark}

\subsection{Dirichlet Boundary Conditions}
\label{sec:Dirichlet Problem}
As noted in \cite{SAITO-LAPEIG-ACHA} and \cite{HS-ACHA18}, the Green's function corresponding to the Dirichlet problem (consult \cref{tab:Green}) is given by 
\begin{equation} \label{eq:G0-Dirichlet}
G_0(x,y)=\frac{1}{2} \left(x+y\right) - x y -\frac{1}{2} |x-y|= -\left(x-\frac{1}{2}\right) \left(y-\frac{1}{2}\right)+\frac{1}{4}-\frac{1}{2} |x-y|.
\end{equation}
The latter form appears in the 1959 paper of Lidski\u{\i} \cite{Lidskii59}. From \cref{eq:G0-Dirichlet}, we easily get 
\begin{align*}
T(x,y)= x y -\frac{1}{2} (x+y).
\end{align*}
Since this expression is symmetric, viz. $T(x,y)=T(y,x)$, the perturbation operator takes the simple expression 
\begin{align} \label{eq:Dperturb}
\mathcal{T} f (x) &= \int_{0}^1 \left(xy - \frac{1}{2} \left(x+y\right) \right) f(y) \dd{y} \notag \\
&= -\left(\int_{0}^1 \frac{1}{2} y f(y) \dd{y} \right) - \left(\int_0^1 (\frac{1}{2} - y ) f(y) \dd{y}\right) x.
\end{align}
By virtue of \cref{eq:Dperturb}, the eigenvalue problem $\mathcal{T}f(x) = \lambda f(x)$
leads to a linear eigenfunctions of the form $f(x) = a_0+a_1 x$ and the matrix eigenvalue problem
\[\begin{pmatrix}
-\frac{1}{4} & - \frac{1}{6} \\
0 & \frac{1}{12}
\end{pmatrix} \,  \begin{pmatrix}
a_0  \\
a_1 \end{pmatrix} = \lambda \begin{pmatrix}
a_0  \\
a_1 
\end{pmatrix} . \]
Thus $\lambda_1=-\frac{1}{4}$, $\lambda_2=\frac{1}{12}$, and the corresponding eigenvectors are
\begin{equation}\label{eq:pDevs}
u_1(x)=1, \quad u_2(x)=x-\frac{1}{2}.
\end{equation}
This agrees with Lidski\u{\i} \cite{Lidskii59} and our earlier work \cite{HS-ACHA18}. We have the immediate theorem:
\begin{theorem}\label{thm:dirichlet}
	The Dirichlet operator $\mathcal{K_{G}}$ is a rank 2 perturbation of $\mathcal{K}$. Moreover, \begin{equation*}
	\mathcal{T} f (x) = \langle f, u_1\rangle u_1(x) + \langle f, u_2\rangle  u_2(x), 
	\end{equation*}
	where $u_1(x), u_2(x)$ are defined in \cref{eq:pDevs}.
\end{theorem}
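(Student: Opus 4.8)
The plan is to follow the four–step recipe set out just before the theorem, which in the Dirichlet case reduces everything to one elementary finite–dimensional computation, most of which the preceding text has already carried out. First I would form the perturbation kernel $T(x,y)=\kappa(x,y)-G_0(x,y)$. Inserting the Dirichlet Green's function from \cref{tab:Green} (equivalently \cref{eq:G0-Dirichlet}), the two $-\tfrac12|x-y|$ contributions cancel and one is left with the manifestly symmetric polynomial kernel $T(x,y)=xy-\tfrac12(x+y)$; thus Steps~1 and~2 are immediate and the splitting in \cref{eq:Txy} is not needed.

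Next I would exploit that $T$ is a degenerate (separable) kernel. Writing it in the basis $\{1,x\}$ of $L^2(0,1)$ gives $T(x,y)=\begin{pmatrix}1&x\end{pmatrix} M_0 \begin{pmatrix}1\\ y\end{pmatrix}$ with $M_0=\begin{pmatrix}0&-1/2\\ -1/2&1\end{pmatrix}$, equivalently $\mathcal{T}f(x)=\bigl(\int_0^1(-\tfrac12 y)f(y)\dd{y}\bigr)\cdot 1+\bigl(\int_0^1(y-\tfrac12)f(y)\dd{y}\bigr)\cdot x$. Hence $\operatorname{ran}\mathcal{T}\subseteq\operatorname{span}\{1,x\}$, and if $f$ is $L^2$–orthogonal to both $1$ and $x$ then $\mathcal{T}f=0$, so $\mathcal{T}$ has rank exactly $2$ (the two linear functionals above are independent since $\det M_0=-1/4\neq0$ and $1,x$ are independent in $L^2$). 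Consequently $\mathcal{K_G}=\mathcal{K}-\mathcal{T}$ is a rank-$2$ perturbation of $\mathcal{K}$, which is the first assertion. For the spectral part I would note that $\mathcal{T}$ is self-adjoint (its kernel is symmetric) and vanishes on $\operatorname{span}\{1,x\}^{\perp}$, so the eigenvalue problem $\mathcal{T}f=\lambda f$ with $\lambda\neq0$ forces $f=a_0+a_1x$; evaluating the two integrals $\int_0^1 f=a_0+a_1/2$ and $\int_0^1 yf(y)\dd{y}=a_0/2+a_1/3$ (Step~3) produces the matrix $M_{\mathcal{T}}=\begin{pmatrix}-1/4&-1/6\\ 0&1/12\end{pmatrix}$ already displayed before the theorem. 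Diagonalizing this triangular matrix (Step~4) gives eigenvalues $\lambda_1=-1/4$, $\lambda_2=1/12$ and eigenfunctions $u_1(x)=1$, $u_2(x)=x-\tfrac12$ as in \cref{eq:pDevs}, with $\langle u_1,u_2\rangle=\int_0^1(x-\tfrac12)\dd{x}=0$ confirming orthogonality.

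Finally I would assemble the spectral resolution by expanding an arbitrary $f$ against the normalized eigenfunctions, or, more transparently, by verifying the kernel identity $T(x,y)=\sum_k \lambda_k\,\|u_k\|^{-2}u_k(x)u_k(y)$; with $\|u_1\|^2=1$ and $\|u_2\|^2=1/12$ this is exactly $xy-\tfrac12(x+y)=-\tfrac14+(x-\tfrac12)(y-\tfrac12)$, which is true, and rewriting the right-hand side as $\langle f,u_1\rangle u_1+\langle f,u_2\rangle u_2$ after absorbing $|\lambda_k|^{1/2}$ and the sign of $\lambda_k$ into the $u_k$ gives the stated formula. There is essentially no obstacle here, since each step is a short computation already half-done in the text; the only points genuinely needing care are the two structural observations — that the rank is exactly $2$ and that $\mathcal{T}$ annihilates $\operatorname{span}\{1,x\}^{\perp}$, so that the two-term sum is the full operator rather than merely its restriction — together with keeping the normalization of the eigenfunctions consistent so that the resolution reproduces $\mathcal{T}$ itself.
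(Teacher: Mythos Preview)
Your approach is essentially the same as the paper's: you carry out the four-step recipe exactly as the text does just before the theorem, arriving at the same symmetric kernel $T(x,y)=xy-\tfrac12(x+y)$, the same $2\times2$ matrix $M_{\mathcal{T}}$, and the same eigenpairs $(\lambda_1,u_1)=(-\tfrac14,1)$ and $(\lambda_2,u_2)=(\tfrac{1}{12},x-\tfrac12)$. The additional care you take in confirming that the rank is exactly~$2$ (via $\det M_0\neq0$), checking orthogonality of $u_1,u_2$, and flagging the normalization needed for the displayed resolution goes a bit beyond what the paper records, but the core argument is identical.
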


\subsection{Neumann Boundary Conditions}
\label{sec:Neumann Problem}
The calculation of the perturbation $\mathcal{T}$ appears as Remark~2.9 in \cite{HS-ACHA18}, where we used the method of \cite{AbramovichAliprantis1, AbramovichAliprantis2}. We offer here a succinct scheme. From the expression of Green's function, 
\begin{equation*}
G_0(x,y)=\frac{1}{6} \left(2+ 3 x^2- 6 y+ 3y^2\right) ,
\end{equation*}
the kernel of the perturbation is 
\begin{equation*}
T(x,y)=-\frac{1}{6} \left(2 -3 x+ 3 x^2 - 3 y+ 3 y^2\right)
\end{equation*}
for $x\le y$. Since this expression is symmetric, viz. $T(x,y)=T(y,x)$, the operator $\mathcal{T}$ acts as
\begin{align}\label{eq:Nperturb}
\mathcal{T} f (x) &= 
\int_{0}^1 T(x,y) f(y) \dd{y} \notag \\
&= \left(\int_{0}^1 -\frac{1}{6} \left(2- 3 y + 3 y^2\right) f(y) \dd{y} \right)+ \left(\int_0^1 \frac{1}{2} f(y) \dd{y}\right) \left(x - x^2\right) .
\end{align}
Hence, the eigenvalue problem corresponding to this operator, $\mathcal{T} f(x) = \lambda f(x)$, leads to a quadratic form of the eigenvector $f(x)= a_0+ a_1 \left(x - x^2\right)$, and the matrix eigenvalue problem
\begin{hide}
	\[\begin{pmatrix}
	-\frac{1}{4} & - \frac{1}{8} &  - \frac{31}{360}\\ \\
	\frac{1}{2} & \frac{1}{4} &  \frac{1}{6}\\ \\
	-\frac{1}{2} & - \frac{1}{4} &  - \frac{1}{6}
	\end{pmatrix} \,  \begin{pmatrix}
	a_0  \\
	a_1 \\
	a_2 
	\end{pmatrix} = \lambda \begin{pmatrix}
	a_0  \\
	a_1\\
	a_2 
	\end{pmatrix} .\]
	Thus $\lambda_1=0$, $\lambda_2=\frac{1}{60} \left(-5-\sqrt{30}\right)$, and 
	$\lambda_3=\frac{1}{60} \left(-5+\sqrt{30}\right)$. The corresponding eigenvectors are:
	\begin{equation}\label{eq:pNevs}
	u_1(x)=-\frac{1}{2} + x, \quad u_2(x) =\frac{1}{30} \left(10+\sqrt{30}\right) - x+x^2, \quad 
	u_3(x)=\frac{1}{30} \left(10-\sqrt{30}\right) - x+x^2. 
	\end{equation}
	Hence we have:
	\begin{theorem}\label{thm:neumann}
		The Neumann operator $\mathcal{K_{G}}$ is a rank 3 perturbation of $\mathcal{K}$. Moreover, \begin{equation*}
		\mathcal{T} f (x) = \langle f, u_1\rangle u_1(x) + \langle f, u_2\rangle  u_2(x)+ \langle f, u_3\rangle u_3(x),
		\end{equation*}
		where $u_1(x), u_2(x), u_3(x)$ are defined in \cref{eq:pNevs}.
	\end{theorem}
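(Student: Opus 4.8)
The plan is to run the four-step scheme of \cref{sec:perturbation}, in exact parallel with the Dirichlet case \cref{thm:dirichlet}. First I would take the Neumann Green's function, $G_0(x,y)=\tfrac{1}{6}(2+3x^2-6y+3y^2)$ for $0\le x\le y\le 1$ (from \cref{tab:Green}), and subtract it from $\kappa(x,y)=-\tfrac{1}{2}|x-y|$; on $x\le y$ one has $|x-y|=y-x$ and the expression collapses to $T(x,y)=-\tfrac{1}{6}(2-3x+3x^2-3y+3y^2)$, which is manifestly symmetric under $x\leftrightarrow y$. So Step~2 is automatic and, as announced in \cref{sec:perturbation}, the splitting of the integral in \cref{eq:Txy} is not needed.

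Second, I would separate the kernel in its first variable, $T(x,y)=-\tfrac{1}{6}(2-3y+3y^2)+\tfrac{1}{2}(x-x^2)$, which gives $\mathcal{T}f(x)=\langle f,g_0\rangle\,1+\tfrac{1}{2}\langle f,1\rangle\,(x-x^2)$ with $g_0(y):=-\tfrac{1}{6}(2-3y+3y^2)$. Hence the range of $\mathcal{T}$ is contained in $\operatorname{span}\{1,\,x-x^2\}$, so every eigenfunction with nonzero eigenvalue is a quadratic; I would substitute $f(x)=a_0+a_1x+a_2x^2$ into $\mathcal{T}f=\lambda f$, expand $\langle f,1\rangle$ and $\langle f,g_0\rangle$ against $1,x,x^2$ via the elementary integrals $\int_0^1 y^k\dd{y}$ and $\int_0^1 y^k(2-3y+3y^2)\dd{y}$, $k=0,1,2$, and read off the $3\times3$ matrix $M_{\mathcal{T}}$ whose eigenproblem is equivalent to that of $\mathcal{T}$ on the space of quadratics.

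Third is the diagonalization. Because $T(x,y)=T(y,x)$, $\mathcal{T}$ is self-adjoint on $L^2(0,1)$, so its eigenvalues are real and its eigenfunctions for distinct eigenvalues are orthogonal; one finds $\tr M_{\mathcal{T}}=-\tfrac{1}{6}$, $\det M_{\mathcal{T}}=0$, and the two nonzero eigenvalues as the roots of $720t^2+120t-1=0$, namely $\lambda_{2,3}=\tfrac{1}{60}(-5\pm\sqrt{30})$, with the two quadratic eigenfunctions $u_2,u_3$ read off from the corresponding eigenvectors of $M_{\mathcal{T}}$; the kernel of $\mathcal{T}$ is spanned by $x-\tfrac{1}{2}$. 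Plugging the eigen-decomposition of $M_{\mathcal{T}}$ back into $\mathcal{T}f=\int_0^1 T(x,y)f(y)\dd{y}$ then produces the stated spectral resolution.

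The step that genuinely needs care — and the one I would flag as the main obstacle — is this last assembly, for two reasons. First, $M_{\mathcal{T}}$ is singular (its one-dimensional kernel being $\operatorname{span}\{x-\tfrac{1}{2}\}$, a manifestation of $\Delta=0$ for the Neumann problem), so $\mathcal{T}$ really has operator rank two, in agreement with \cref{tab:classif}; the ``$3$'' records only the dimension of the ambient space $\operatorname{span}\{1,x,x^2\}$ on which $M_{\mathcal{T}}$ acts, and the $\lambda=0$ mode $x-\tfrac{1}{2}$ contributes nothing to $\mathcal{T}$. Second, since $\lambda_2<0$, the resolution must be written with the eigenvalues made explicit, $\mathcal{T}f=\sum_{k}\lambda_k\langle f,\widehat{u}_k\rangle\widehat{u}_k$ over the two orthonormal eigenfunctions $\widehat{u}_k$ belonging to the nonzero eigenvalues (equivalently $\sum_k\langle f,v_k\rangle v_k$ with $v_k=\sqrt{\lambda_k}\,\widehat{u}_k$, complex for $k=2$, and $\langle\cdot,\cdot\rangle$ the bilinear pairing $\int_0^1 fg\dd{x}$); a naive sum $\sum\langle f,u_k\rangle u_k$ over unnormalized real eigenfunctions would be positive semidefinite and hence cannot reproduce $\mathcal{T}$. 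Everything else — the entries of $M_{\mathcal{T}}$, its characteristic polynomial, and the eigenvectors $u_2,u_3$ — is routine linear algebra.
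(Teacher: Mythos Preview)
Your computation follows the paper's line exactly: form $T(x,y)=-\tfrac{1}{6}(2-3x+3x^2-3y+3y^2)$, note the symmetry, and reduce $\mathcal{T}f=\lambda f$ to a matrix problem on polynomials. In the argument attached to this rank-$3$ statement the paper works on $\operatorname{span}\{1,x,x^2\}$ with the $3\times3$ matrix, obtaining $\lambda_1=0$ with eigenvector $u_1(x)=-\tfrac12+x$ and $\lambda_{2,3}=\tfrac{1}{60}(-5\mp\sqrt{30})$ with the two quadratic eigenfunctions, and then simply records the rank-$3$ claim and the displayed spectral formula without further comment.

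Your final paragraph is the substantive contribution, and it is \emph{correct} and sharper than what the paper records for this version of the statement. Since one eigenvalue of $M_{\mathcal{T}}$ vanishes, the operator rank of $\mathcal{T}$ is $2$, not $3$; the ``$3$'' only counts the ambient space of quadratics. Moreover the resolution $\mathcal{T}f=\sum_{k=1}^3\langle f,u_k\rangle u_k$ cannot hold as written: the zero mode drops out, and with $\lambda_2<0$ the eigenvalues must appear explicitly (or the $u_k$ be taken complex). The paper's source in fact carries a superseding version of this very subsection which does exactly what you propose: observing that the range of $\mathcal{T}$ lies in $\operatorname{span}\{1,\,x-x^2\}$, it passes directly to a $2\times2$ matrix and states the Neumann case as a rank-$2$ perturbation, consistent with \cref{tab:classif}. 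So your objection is precisely the correction the authors themselves adopted.
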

\end{hide}
\[\begin{pmatrix}
-\frac{1}{4} & - \frac{7}{180}\\ \\  
\frac{1}{2} & \frac{1}{12}
\end{pmatrix} \,  \begin{pmatrix}
a_0  \\
a_1
\end{pmatrix} = \lambda \begin{pmatrix}
a_0  \\
a_1
\end{pmatrix} .\]
Thus $\lambda_1=\frac{1}{60} \left(-5-\sqrt{30}\right)$, and 
$\lambda_2=\frac{1}{60} \left(-5+\sqrt{30}\right)$. The corresponding eigenvectors are:
\begin{equation}\label{eq:pNevs}
u_1(x) =-\frac{1}{30} \left(10+\sqrt{30}\right) + x - x^2, \quad 
u_2(x)=\frac{1}{30} \left(-10+\sqrt{30}\right) + x- x^2. 
\end{equation}
Hence we have:
\begin{theorem}\label{thm:neumann}
	The Neumann operator $\mathcal{K_{G}}$ is a rank 2 perturbation of $\mathcal{K}$. Moreover, \begin{equation*}
	\mathcal{T} f (x) = \langle f, u_1\rangle u_1(x) + \langle f, u_2\rangle  u_2(x),
	\end{equation*}
	where $u_1(x)$ and $u_2(x)$ are defined in \cref{eq:pNevs}.
\end{theorem}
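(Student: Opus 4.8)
The plan is to follow the four-step scheme announced at the start of \cref{sec:perturbation}, in exact parallel with the proof of \cref{thm:dirichlet}. First I would subtract the Neumann Green's function of \cref{tab:Green}, namely $G_0(x,y)=\tfrac16(2+3x^2-6y+3y^2)$ for $x\le y$, from $\kappa(x,y)=-\tfrac12|x-y|$ to obtain the perturbation kernel $T(x,y)=\kappa(x,y)-G_0(x,y)=-\tfrac16(2-3x+3x^2-3y+3y^2)$ for $x\le y$. Since this polynomial is symmetric under $x\leftrightarrow y$, the same formula is the kernel on all of $[0,1]^2$ and the splitting in \cref{eq:Txy} is unnecessary; in particular $\mathcal{T}$ is a self-adjoint Hilbert--Schmidt operator on $L^2(0,1)$.

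Next I would regroup $T(x,y)=-\tfrac16(2-3y+3y^2)+\tfrac12(x-x^2)$ and integrate in $y$, getting $\mathcal{T} f(x)=-\tfrac16\int_0^1(2-3y+3y^2)f(y)\,dy+\tfrac12\bigl(\int_0^1 f(y)\,dy\bigr)(x-x^2)$. This shows $\ran\mathcal{T}\subseteq\operatorname{span}\{1,\,x-x^2\}$, so $\rank\mathcal{T}\le2$ and every eigenfunction for a nonzero eigenvalue has the form $f(x)=a_0+a_1(x-x^2)$. Substituting this ansatz into $\mathcal{T}f=\lambda f$ and matching the coefficients of $1$ and of $x-x^2$ reduces the eigenvalue problem to the $2\times2$ matrix $M_{\mathcal{T}}=\begin{pmatrix}-1/4 & -7/180\\ 1/2 & 1/12\end{pmatrix}$, whose entries are the two functionals above evaluated on $\{1,\,x-x^2\}$. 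A short computation gives $\tr M_{\mathcal{T}}=-\tfrac16$ and $\det M_{\mathcal{T}}=-\tfrac1{720}$, hence the characteristic polynomial is $\lambda^2+\tfrac16\lambda-\tfrac1{720}$ with roots $\lambda_{1,2}=\tfrac1{60}(-5\pm\sqrt{30})$; since the determinant is nonzero, both roots are nonzero, so $\rank\mathcal{T}=2$ exactly, i.e.\ $\mathcal{K_G}$ is a rank-$2$ perturbation of $\mathcal{K}$. Reading off the eigenvectors of $M_{\mathcal{T}}$ produces the polynomial eigenfunctions of \cref{eq:pNevs}, and the spectral theorem for the self-adjoint finite-rank operator $\mathcal{T}$ then delivers $\mathcal{T}f=\langle f,u_1\rangle u_1+\langle f,u_2\rangle u_2$ once $u_1,u_2$ are taken with the normalization of that display.

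I do not expect a genuine obstacle here: the whole argument collapses to two elementary integrals and a $2\times2$ eigenvalue computation, just as in the Dirichlet case. The only points carrying actual content are (i) verifying that the functionals $f\mapsto\int_0^1(2-3y+3y^2)f(y)\,dy$ and $f\mapsto\int_0^1 f(y)\,dy$ are linearly independent, equivalently $\det M_{\mathcal{T}}\ne0$, which is exactly what pins the rank down to $2$ rather than $1$; and (ii) choosing the normalization (and sign) of $u_1,u_2$ so that the ``moreover'' identity holds in the stated compact form, bearing in mind that $\lambda_1<0<\lambda_2$, so the eigenvalue data must be absorbed into the $u_k$.
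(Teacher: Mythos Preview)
Your proposal is correct and follows essentially the same route as the paper: the same symmetric kernel $T(x,y)$, the same two-dimensional range $\operatorname{span}\{1,\,x-x^2\}$, the same $2\times2$ matrix $M_{\mathcal{T}}$, and the same eigenvalues and eigenfunctions. Your added checks---that $\det M_{\mathcal{T}}\ne0$ pins the rank at exactly $2$, and that the ``moreover'' identity requires a specific normalization/sign choice in $u_1,u_2$ to absorb the eigenvalues (one of which is negative)---are worthwhile points the paper leaves implicit.
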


\begin{remark}
The expression of the perturbation operator $\mathcal{T} f (x)$ in \cref{eq:Nperturb} is quadratic in $x$, and thus one can simply write $f(x)= a_0+ a_1 x + a_2 x^2$. This leads to an eigenvalue problem with a matrix $M_{\mathcal{T}}$ given by 
\begin{equation*}
M_{\mathcal{T}}= \begin{pmatrix}
-\frac{1}{4} & -\frac{1}{8} &  -\frac{31}{361}  \\ \\
\frac{1}{2} & \frac{1}{4} &  \frac{1}{6}  \\ \\
-\frac{1}{2} & -\frac{1}{4} &  -\frac{1}{6}  \\ \\
\end{pmatrix}.
\end{equation*}
Notice that the first column is twice the second, hence the column vectors are linearly dependent and the rank of this $3\times 3$ matrix is 2. Indeed the eigenvalues of this matrix are $\lambda_1$ and $\lambda_2$ above, with the eigenfunctions given in \cref{eq:pNevs}, and 
$\lambda_0=0$, with corresponding eigenfunction $u_0(x)=-\frac{1}{2}+x$. Alternatively one can perform a Singular Value Decomposition (SVD) of the matrix $M_{\mathcal{T}}$ which immediately gives the number of non-zero singular values in this case as 2. This is also the rank since the number of non-zero singular values is also the rank (see 
Theorem 5.1 of Ref.~\onlinecite{TREFETHEN-BAU}). 
\end{remark}

\subsection{Kre\u{\i}n-von Neumann Boundary Conditions}
\label{sec:KvN Problem}
In \cref{sec:KvN Resolvent}, we derived the Green's function $G_0(x,y)$ corresponding to this problem with BCs $u'(0)=u'(1)=-u(0)+u(1)$. For $0\le x\le y\le 1$, it is explicitly given by 
\begin{align*} \label{eq:integral-KvN}
G_0(x,y) &= \frac{1}{30} \big(
4 - 3 x - 33 y+60 x^2+ 60 y^2 + 36 xy \\
& \hspace{5em} -30 x^3-90 x^2 y - 90 x y^2- 30 y^3+ 60 x^3 y+ 60 x y^3\big). 
\end{align*}
The expression of $T(x,y)$ is given by 
\begin{align*} 
T(x,y) &=
\frac{1}{15} \big(-2+ 9 x - 30 x^2 + 15 x^3 + 45 x^2 y - 30 x^3 y \\ 
& 	\hspace{5em}- 18 x y - 30 x^3 y + 45 x y^2 + 15 y^3 - 30 y^2 + 9 y \big).
\end{align*}
This is again symmetric, viz. $T(x,y)=T(y,x)$. This is a cubic polynomial in $x$, and thus the eigenvector of the matrix problem is of the form $f(x)=a_0+a_1 x+a_2 x^2 +a_3 x^3$. The corresponding matrix has the explicit form  
\begin{equation}
M_{\mathcal{T}}= \begin{pmatrix}
-\frac{1}{4} & - \frac{1}{6} &  - \frac{23}{180} &  - \frac{109}{1050}  \\ \\
\frac{1}{2} & \frac{1}{4} &  \frac{1}{6} &  \frac{87}{700} \\ \\
-\frac{1}{2} & 0 &   \frac{1}{12} &  \frac{1}{10} \\ \\ 
0 & - \frac{1}{6} &  - \frac{1}{6} &  - \frac{3}{20} \\ \\ 
\end{pmatrix}.
\end{equation}
Its eigenvalues are: $\lambda_1=\frac{1}{60} \left(-5 - \sqrt{30}\right)$, $\lambda_2=\frac{1}{60} \left(-5 + \sqrt{30}\right)$, $\lambda_3=\frac{1}{420} \left(21- \sqrt{462}\right)$, and $\lambda_4=\frac{1}{420} \left(21+ \sqrt{462}\right)$. The corresponding eigenvectors are
\begin{equation}\label{eq:pKvNevs}
\begin{aligned}
u_1(x)&=\frac{1}{30} \left(10+\sqrt{30}\right)- x+ x^2, \\
u_2(x)&=\frac{1}{30} \left(10-\sqrt{30}\right)- x+ x^2, \\
u_3(x)&=\frac{1}{140} \left(14-\sqrt{462}\right)- \frac{1}{70} \left(21+\sqrt{462}\right) x-\frac{3}{2} x^2 + x^3, \\
u_4(x)&=\frac{1}{140} \left(14+\sqrt{462}\right)- \frac{1}{70} \left(21-\sqrt{462}\right) x-\frac{3}{2} x^2 + x^3.
\end{aligned}
\end{equation}
We now have:
\begin{theorem}\label{thm:KvN}
	The Kre\u{\i}n-von Neumann operator $\mathcal{K_{G}}$ is a rank 4 perturbation of $\mathcal{K}$. Moreover, \begin{equation*}
	\mathcal{T} f (x) = \sum_{k=1}^4 \langle f, u_k\rangle u_k(x),
	\end{equation*}
	where $u_k(x)$, $k=1, 2, 3, 4$, are defined in \cref{eq:pKvNevs}.
\end{theorem}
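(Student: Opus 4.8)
The plan is to run the four-step scheme of \cref{sec:perturbation} with $G_0$ the Kre\u{\i}n--von Neumann Green's function derived in \cref{sec:KvN Resolvent} (equivalently, the corresponding entry of \cref{tab:Green}). First I would form $T(x,y)=\kappa(x,y)-G_0(x,y)$ for $0\le x\le y\le 1$; this is the displayed cubic, and since it is manifestly invariant under $x\leftrightarrow y$, Step~2 is automatic and the splitting of the integral in \cref{eq:Txy} is unnecessary. In particular $\mathcal{T}$ is a self-adjoint operator on $L^2(0,1)$.

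The decisive structural point (the content of Step~3) is that $T(x,y)=\sum_{i=0}^{3}x^{i}g_i(y)$ with each $g_i$ a polynomial of degree at most $3$, so for every $f\in L^2(0,1)$ one has $\mathcal{T}f(x)=\sum_{i=0}^{3}x^{i}\langle g_i,f\rangle\in\linspan\{1,x,x^2,x^3\}=:\mathcal{P}_3$. Hence $\ran\mathcal{T}\subseteq\mathcal{P}_3$, so $\rank\mathcal{T}\le 4$, and since $\mathcal{P}_3$ contains the range it is $\mathcal{T}$-invariant and it suffices to analyze the restriction $\mathcal{T}|_{\mathcal{P}_3}$. Computing $\mathcal{T}x^{j}=\int_0^1 T(x,y)\,y^{j}\,\dd{y}$ for $j=0,1,2,3$ is routine integration of monomials, and reading off the coefficients of $1,x,x^2,x^3$ in each result produces the columns of the $4\times 4$ matrix $M_{\mathcal{T}}$ in the statement.

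It then remains to diagonalize $M_{\mathcal{T}}$. Its characteristic polynomial is a rational quartic which I expect to factor over $\mathbb{Q}$ into the two quadratics whose roots are $\tfrac{1}{60}(-5\pm\sqrt{30})$ and $\tfrac{1}{420}(21\pm\sqrt{462})$; solving $(M_{\mathcal{T}}-\lambda_k I)\mathbf{a}=0$ for each of the four eigenvalues yields the coefficient vectors of the polynomials $u_k$ in \cref{eq:pKvNevs}. Since none of the four eigenvalues vanishes, $M_{\mathcal{T}}$ is invertible, so in fact $\mathcal{T}(\mathcal{P}_3)=\mathcal{P}_3$ and $\rank\mathcal{T}=4$ exactly, which is the ``rank $4$ perturbation'' assertion. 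Finally, self-adjointness of $\mathcal{T}$ together with the spectral theorem for finite-rank self-adjoint operators (equivalently, Mercer's theorem for the polynomial kernel $T$), applied to $\mathcal{T}|_{\mathcal{P}_3}$ with the eigenpairs just found and the $u_k$ normalized accordingly, gives the spectral resolution $\mathcal{T}f=\sum_{k=1}^{4}\langle f,u_k\rangle u_k$, completing the proof.

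The step I expect to be the main obstacle is the eigen-analysis of $M_{\mathcal{T}}$: it is written in the non-orthonormal monomial basis and is \emph{not} symmetric, so no symmetry shortcut is available, and the real work is verifying that its quartic characteristic polynomial splits into the two advertised rational quadratics -- and, within the same computation, that its constant term (that is, $\det M_{\mathcal{T}}$) is nonzero, which is exactly what makes the rank equal to $4$ rather than at most $3$. A convenient shortcut worth noting is that $\lambda_1,\lambda_2$ and $u_1,u_2$ in \cref{eq:pKvNevs} coincide with the Neumann data of \cref{thm:neumann}; this reflects a decomposition of the Kre\u{\i}n--von Neumann kernel $T$ as the Neumann kernel plus a kernel acting only on the cubic directions, and exploiting it would reduce the $4\times 4$ diagonalization to two $2\times 2$ ones.
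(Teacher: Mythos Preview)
Your proposal is correct and follows the paper's approach essentially verbatim: form $T(x,y)$ from the Kre\u{\i}n--von Neumann Green's function of \cref{sec:KvN Resolvent}, observe its symmetry and that it is cubic in $x$, reduce the eigenproblem for $\mathcal{T}$ to the $4\times4$ matrix $M_{\mathcal T}$ in the monomial basis, and compute its four eigenpairs. Your explicit remark that nonvanishing of all four eigenvalues forces $\rank\mathcal T=4$ (rather than merely $\le 4$) is the point the paper leaves implicit, and your observation that $\lambda_1,\lambda_2,u_1,u_2$ coincide with the Neumann data of \cref{thm:neumann} is correct and not noted in the paper; it does yield the block reduction you describe, since $(\mathcal T_{\mathrm{KvN}}-\mathcal T_{\mathrm N})$ then annihilates $\linspan\{u_1,u_2\}$ and, by self-adjointness, has range in its $L^2$-orthogonal complement $\linspan\{u_3,u_4\}$ inside $\mathcal P_3$.
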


\subsection{Robin Boundary Conditions}
\label{sec:Perturb Robin Problem}
Using the expression of the Green's function developed in \cref{sec:Robin Resolvent},
the expression of the perturbation is given by
\begin{equation*}
T(x,y)= \frac{2 - 2 \delta + \(\alpha+\delta - \alpha \delta\) x + \(\alpha+\delta - \alpha \delta\) y + 2 \alpha \delta x y}{2 \(\delta - \alpha + \alpha \delta\)}
\end{equation*}
for $0\le x \le y \le 1$, with $T(x,y)=T(y,x)$ for all $x, y \in [0,1]$. The eigenvectors are then of the form $f(x)= a_0+ a_1 x$. The matrix is given by, 
\begin{equation*}
M_{\mathcal{T}}= \begin{pmatrix}
\frac{4 + \alpha - 3 \delta- \alpha \delta}{4 \(\delta - \alpha + \alpha \delta\)} & \frac{3 + \alpha- 2 \delta - \alpha \delta}{6 \(\delta - \alpha + \alpha \delta\)}  \\ \\
\frac{\alpha+ \delta}{2\(\delta - \alpha + \alpha \delta\)} & 
\frac{3 \alpha+ 3 \delta + \alpha \delta}{12 \(\delta - \alpha + \alpha \delta\)}
\end{pmatrix}.
\end{equation*}
with eigenvalues \\
$\lambda_1=\frac{6+ 3 \alpha - 3 \delta - \alpha \delta - 2 
	\sqrt{\(3 + 3 \alpha + \alpha^2\) \(3 - 3 \delta + \delta^2\) }}{12\(\delta - \alpha + \alpha \delta\) }$,
$\lambda_2=\frac{6+ 3 \alpha - 3 \delta - \alpha \delta + 2 
	\sqrt{\(3 + 3 \alpha + \alpha^2\) \(3 - 3 \delta + \delta^2\) }}{12\(\delta - \alpha + \alpha \delta\) }$,\\
and corresponding eigenvectors
\begin{equation*}
u_1(x)= \frac{3 - 3 \delta - \alpha \delta - 
	\sqrt{\(3 + 3 \alpha + \alpha^2\) \(3 - 3 \delta + \delta^2\) }}{3 \(\alpha + \delta\) }+ x
\end{equation*}
and
\begin{equation*}
u_2(x)= \frac{3 - 3 \delta - \alpha \delta + 
	\sqrt{\(3 + 3 \alpha + \alpha^2\) \(3 - 3 \delta + \delta^2\) }}{3 \(\alpha + \delta\) }+ x. 
\end{equation*}

\begin{example}
	In \cref{rk:3.2} the limiting BC case $u(0)=u'(1)=0$ example of the Robin problem was discussed. The perturbation kernel takes the form
	\begin{equation*}
	T(x,y)=- \frac{1}{2} \(x+y\)
	\end{equation*} 
	for $0\le x \le y \le 1$, with $T(x,y)=T(y,x)$ for all $x, y \in [0,1]$. The eigenvectors are then of the form $f(x)= a_0+ a_1 x$. The matrix is given by, 
	\begin{equation*}
	M_{\mathcal{T}}= \begin{pmatrix}
	-\frac{1}{4} & - \frac{1}{6}  \\ \\
	-\frac{1}{2} & 
	- \frac{1}{4}
	\end{pmatrix}.
	\end{equation*}
	with eigenvalues $\lambda_1=- \frac{3+2 \sqrt{3}}{12}$ and $\lambda_2= -
	\frac{3- 2 \sqrt{3}}{12}$,
	and corresponding eigenvectors
	\begin{equation*}
	u_1(x)= 
	\frac{1}{\sqrt{3}}+ x
	\end{equation*}
	and
	\begin{equation*}
	u_2(x)= - \frac{1}{\sqrt{3}}+ x.
	\end{equation*}
\end{example}

\begin{example}
	In Kato's first variation on the Robin example discussed in \cref{rk:3.3}, the perturbation kernel takes the form
	\begin{equation*}
	T(x,y)= -\frac{1}{2} (x+y) + \frac{1}{1-\tau}xy
	\end{equation*}
	for $0\le x \le y \le 1$. Clearly, $T(x,y)=T(y,x)$ for all $x, y \in [0,1]$. The eigenvectors are then of the form $f(x)= a_0+ a_1 x$. The matrix of the finite rank operator is $2\times 2$, 
	\begin{equation*}
	M_{\mathcal{T}}= 
	\begin{pmatrix}
	-\frac{1}{4} & - \frac{1}{6}  \\ \\
	\frac{\tau}{2 \(1-\tau\)} & 
	\frac{1 + 3 \tau}{12 \(1-\tau\) }
	\end{pmatrix}.
	\end{equation*}
	with eigenvalues 
	$\lambda_1=- \frac{1- 3\tau - 2 \sqrt{1-3 \tau + 3 \tau^2}}{12 \(1-\tau\) }$ 
	and 
	$\lambda_2= - \frac{1- 3\tau + 2 \sqrt{1-3 \tau + 3 \tau^2}}{12 \(1-\tau\) }$,
	and corresponding eigenvectors
	\begin{equation*}
	u_1(x)= 
	- \frac{1 - \sqrt{1-3 \tau + 3 \tau^2}}{3 \tau } + x
	\end{equation*}
	and
	\begin{equation*}
	u_2(x)= 
	- \frac{1 + \sqrt{1-3 \tau + 3 \tau^2}}{3 \tau } + x.
	\end{equation*}
\end{example}

\begin{example}
	In Kato's second variation on the Robin BC of \cref{rk:3.4}, the perturbation kernel takes the form
	\begin{equation*}
	T(x,y)= -\frac{1}{2} (x+y) + \frac{\tau}{1+\tau}xy
	\end{equation*} 
	for $0\le x \le y \le 1$. We also have $T(x,y)=T(y,x)$ for all $x, y \in [0,1]$. The eigenvectors are then of the form $f(x)= a_0+ a_1 x$. The associated matrix is also $2\times 2$, 
	\begin{equation*}
	M_{\mathcal{T}}= \begin{pmatrix}
	-\frac{1}{4} & - \frac{1}{6}  \\ \\
	- \frac{1}{2 \(1+\tau\)} & 
	-\frac{3 - \tau}{12 \(1+\tau\) }
	\end{pmatrix}.
	\end{equation*}
	with eigenvalues $\lambda_1=- \frac{3+ \tau + 2 \sqrt{3+3 \tau + \tau^2}}{12 \(1+\tau\) }$ and $\lambda_2= - \frac{3+\tau - 2 \sqrt{3+3 \tau + \tau^2}}{12 \(1+\tau\) }$,
	and corresponding eigenvectors
	\begin{equation*}
	u_1(x)= \frac{\tau + \sqrt{3+3 \tau + \tau^2}}{3} + x
	\end{equation*}
	and
	\begin{equation*}
	u_2(x)= \frac{\tau - \sqrt{1+3 \tau + \tau^2}}{3} + x.
	\end{equation*}
\end{example}

\begin{example}
	In \cref{rk:3.5} of Stakgold and Holst, the perturbation kernel takes the form
	\begin{equation*}
	T(x,y)= -\frac{1}{2} (x+y) + \frac{\cot \theta }{1 + \cot \theta} xy
	\end{equation*} 
	for $0\le x \le y \le 1$, and again $T(x,y)=T(y,x)$ for all $x, y \in [0,1]$. The eigenvectors are then of the form $f(x)= a_0+ a_1 x$. The matrix of the finite rank operator is $2\times 2$, 
	\begin{equation*}
	M_{\mathcal{T}}= \begin{pmatrix}
	-\frac{1}{4} & - \frac{1}{6}  \\ \\
	- \frac{1}{2 \(1 +  \cot\theta\)} & 
	- \frac{3 -  \cot \theta}{12 \(1 + \cot \theta\) }
	\end{pmatrix}.
	\end{equation*}
	with eigenvalues $\lambda_1= - \frac{3+ \cot \theta  + 2 \sqrt{3+ \cot \theta+ \cot^2 \theta}}{12 \(1+ \cot \theta \) }$ and $\lambda_2=- \frac{3+ \cot \theta  - 2 \sqrt{3+ \cot \theta+ \cot^2 \theta}}{12 \(1+ \cot \theta \) }$,
	and corresponding eigenvectors
	\begin{equation*}
	u_1(x)= \frac{\cot \theta + \sqrt{3+3 \cot \theta + \cot^2 \theta}}{3}+ x
	\end{equation*}
	and
	\begin{equation*}
	u_2(x)= \frac{\cot \theta - \sqrt{3+3 \cot \theta + \cot^2 \theta}}{3}+ x. 
	\end{equation*}
\end{example}

\begin{remark}
    When $\delta-\alpha+ \alpha \delta = 0$, and by the virtue of \cref{rk:III.6}, the perturbation kernel takes the form
    \begin{equation} \label{eq:GRobin:T}
\begin{split}
T(x,y) &=-c_{00} + \(\frac{1}{2} - c_{10} \)x - \(c_{01}+\frac{1}{2} \) y - c_{20} x^2 - c_{11} x y - c_{02} y^2 - c_{30} x^3 \\
& 	\hspace{1em} - c_{21} x^2 y - c_{12} x y^2 - c_{03} y^3 -c_{31} x^3 y- c_{13} x y^3 \\
&\define	d_{00} + d_{10} x +d_{10} y +d_{20} x^2 +d_{11} x y +d_{20} y^2 + d_{30} x^3 + d_{21} x^2 y \\
& 	\hspace{1em} + d_{21} x y^2 +d_{30} y^3 +d_{31} x^3 y+ d_{31} x y^3 .
\end{split}
\end{equation}
where the $c_{ij}$ and $d_{ij}$ coefficients are again explicitly given in \cref{Appendix:A}, with $\beta=0$. 
The eigenfunctions have the form $f(x)=a_0+ a_1 x+a_2 x^2 + a_3 x^3$, and the corresponding matrix $M_{\mathcal{T}}= \left(m_{ij}\right)$, is $4\times 4$ with elements given by \cref{Appendix:B} with $\beta=0$. This is a rank 2 matrix with $\lambda=0$ as a double eigenvalue; see also \cref{sec:GSARS Boundary Conditions}. 
\end{remark}

\subsection{Periodic Boundary Conditions}
\label{sec:Periodic Problem}
The Green's function in this case is given in \cref{sec:Periodic Resolvent}, for $0\le x\le y\le 1$, by 
\begin{align*} \label{eq:integral-Periodic}
G_0(x,y)= \frac{1}{12} 
\(1+ 6 x - 6 y + 6 x^2 - 12 x y + 6 y^2\). 
\end{align*}
The expression of $T(x,y)$ is given by 
\begin{align*} 
T(x,y)=
\frac{1}{12} \(-1-6 x^2 +12 x y-6 y^2 \).
\end{align*}
It is symmetric, viz. $T(x,y)=T(y,x)$. 
Since it is a quadratic polynomial in $x$, the eigenvector of the matrix problem is of the form $f(x)=a_0+a_1 x+a_2 x^2$. 
The corresponding matrix is   
\begin{equation*}
M_{\mathcal{T}}= \begin{pmatrix}
-\frac{1}{4} & - \frac{1}{6} &  - \frac{23}{180}  \\ \\
\frac{1}{2} & \frac{1}{3} &  \frac{1}{4} \\ \\
-\frac{1}{2} & -\frac{1}{4} &   -\frac{1}{6} \\ \\ 
\end{pmatrix}.
\end{equation*}
The eigenvalues are: $\lambda_1=\frac{1}{12}$, $\lambda_2=\frac{1}{60} \left(-5 - \sqrt{30}\right)$, $\lambda_3=\frac{1}{60} \left(-5 + \sqrt{30}\right)$.
The corresponding eigenvectors are
\begin{equation}\label{eq:pPevs}
u_1(x)= -\frac{1}{2} + x, \quad
u_2(x)= \frac{1}{30} \left(10+\sqrt{30}\right)- x+ x^2, \quad
u_3(x)= \frac{1}{30} \left(10-\sqrt{30}\right)-x+x^2. 
\end{equation}
We have:
\begin{theorem}\label{thm:Periodic}
	The periodic operator $\mathcal{K_{G}}$ is a rank 3 perturbation of $\mathcal{K}$. Moreover, \begin{equation*}
	\mathcal{T} f (x) = \sum_{k=1}^3 \langle f, u_k\rangle u_k(x), 
	\end{equation*}
	where $u_k(x)$, $k=1,2,3$, are defined in \cref{eq:pPevs}.
\end{theorem}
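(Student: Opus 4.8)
The plan is to apply the four-step scheme of \cref{sec:perturbation} with $G_0$ taken to be the periodic Green's function $G_0(x,y)=\tfrac{1}{12}\bigl(1+6x-6y+6x^2-12xy+6y^2\bigr)$ derived in \cref{sec:Periodic Resolvent}. First I would subtract the free kernel: since $\kappa(x,y)=-\tfrac12(y-x)$ for $0\le x\le y\le 1$,
\begin{align*}
T(x,y)&=\kappa(x,y)-G_0(x,y)=-\tfrac12(y-x)-\tfrac{1}{12}\bigl(1+6x-6y+6x^2-12xy+6y^2\bigr)\\
&=\tfrac{1}{12}\bigl(-1-6x^2+12xy-6y^2\bigr),
\end{align*}
which is manifestly symmetric in $x$ and $y$, so it already defines $T$ on all of $[0,1]^2$ and the split in \cref{eq:Txy} is unnecessary.

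The structural point is that $T(x,y)$ has degree $2$ in $x$: writing $T(x,y)=c_0(y)+c_1(y)\,x+c_2(y)\,x^2$, the three $y$-sections $c_0,c_1,c_2$ are quadratic, linear, and constant in $y$ respectively, hence linearly independent. Consequently $\mathcal{T}f(x)=\langle c_0,f\rangle+\langle c_1,f\rangle\,x+\langle c_2,f\rangle\,x^2$ is always a quadratic polynomial, $\ran\mathcal{T}=\linspan\{1,x,x^2\}$ is exactly $3$-dimensional, every eigenfunction of $\mathcal{T}$ with nonzero eigenvalue lies in $\linspan\{1,x,x^2\}$, and the nonzero spectrum of $\mathcal{T}$ agrees with that of the $3\times 3$ matrix $M_{\mathcal{T}}$ representing $\mathcal{T}$ on that subspace in the monomial basis. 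I would then obtain $M_{\mathcal{T}}$ by computing $\mathcal{T}[x^j]$ for $j=0,1,2$ (each a handful of elementary integrals $\int_0^1 y^m\,\dd{y}$) and reading off its columns.

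The remaining work is linear-algebraic: diagonalize $M_{\mathcal{T}}$. Its characteristic polynomial is a cubic; I would factor out the rational root $\lambda_1=\tfrac{1}{12}$ (with eigenvector $u_1(x)=x-\tfrac12$), leaving a quadratic whose roots are $\lambda_{2,3}=\tfrac{1}{60}(-5\pm\sqrt{30})$ --- which happen to be precisely the two eigenvalues appearing in the Neumann case, \cref{thm:neumann}. Solving $(M_{\mathcal{T}}-\lambda_k I)a=0$ for $k=1,2,3$ then produces the eigenvectors recorded in \cref{eq:pPevs}. Since $\det M_{\mathcal{T}}=\lambda_1\lambda_2\lambda_3\neq 0$, all three eigenvalues are nonzero, so $\mathcal{T}$ has rank exactly $3$ and $\mathcal{K_G}=\mathcal{K}-\mathcal{T}$ is a rank-$3$ perturbation of $\mathcal{K}$. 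Finally, as $T$ is real and symmetric, $\mathcal{T}$ is a compact self-adjoint operator with three simple eigenvalues, so $u_1,u_2,u_3$ are mutually orthogonal and, with the normalization fixed in \cref{eq:pPevs}, the spectral theorem delivers $\mathcal{T}f(x)=\sum_{k=1}^{3}\langle f,u_k\rangle\,u_k(x)$ --- an identity one may equally well confirm by checking it on the spanning set $\{1,x,x^2\}$. There is no genuine obstacle here: Steps 1--3 are routine bookkeeping, and the one point in Step 4 worth a word is the non-degeneracy $\det M_{\mathcal{T}}\neq 0$, which follows instantly from the linear independence of the three $y$-sections of $T$, ensuring the rank is really $3$ and not $2$.
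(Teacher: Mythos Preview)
Your route through Steps~1--4 is exactly the paper's: compute $T(x,y)$, note its symmetry, observe that $\ran\mathcal{T}\subset\linspan\{1,x,x^2\}$, pass to the $3\times 3$ matrix $M_{\mathcal{T}}$ in the monomial basis, and diagonalize it to recover the eigenpairs recorded in \cref{eq:pPevs}. The rank-$3$ conclusion and the eigendata are correctly argued, and your remark that $c_0,c_1,c_2$ are linearly independent is a clean way to see that the rank is exactly $3$ (the paper leaves this implicit).

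The gap is in your final sentence. The spectral theorem for the compact self-adjoint operator $\mathcal{T}$ gives
\[
\mathcal{T}f \;=\; \sum_{k=1}^{3}\lambda_k\,\frac{\langle f,u_k\rangle}{\|u_k\|^{2}}\,u_k,
\]
not $\sum_{k}\langle f,u_k\rangle\,u_k$. For the two expressions to coincide one would need $\lambda_k=\|u_k\|^{2}$ for every $k$, which is impossible here since $\lambda_2=\tfrac{1}{60}(-5-\sqrt{30})<0$ while the $u_k$ in \cref{eq:pPevs} are real. Your proposed sanity check on the spanning set in fact fails: for $f\equiv 1$ one has $\langle 1,u_1\rangle=0$ and $\langle 1,u_{2,3}\rangle=\tfrac{5\pm\sqrt{30}}{30}$, so the $x^{2}$-coefficient of $\sum_{k}\langle 1,u_k\rangle u_k$ is $\tfrac{5+\sqrt{30}}{30}+\tfrac{5-\sqrt{30}}{30}=\tfrac13$, whereas a direct integration gives $\mathcal{T}1(x)=-\tfrac14+\tfrac12 x-\tfrac12 x^{2}$, with $x^{2}$-coefficient $-\tfrac12$. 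This discrepancy is inherited from the paper's own schematic formula for the spectral resolution at the start of \cref{sec:perturbation}, which omits the eigenvalue weights; the paper never actually verifies that identity, but your added claim that it can be ``confirmed by checking'' turns a notational slip into an explicit false assertion.
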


\subsection{Anti-Periodic Boundary Conditions}
\label{sec:Anti-Periodic Problem}
The anti-periodic integral operator turns out to be the ``closest'' to the nonlocal integral operator. To see this, note that for $0\le x\le y\le 1$, the derived Green's function takes the form 
\begin{align} \label{eq:integral-anti-Periodic}
G_0(x,y)=  \frac{1}{4} \(1+ 2 x - 2y\) 
\end{align}
and $T(x,y)=-\frac{1}{4}$. Thus $\mathcal{T}$ has eigenvalue $\lambda=-\frac{1}{4}$ and eigenvector $u_1(x)\equiv 1$. 
Hence, we have:
\begin{theorem}\label{thm:Anti-Periodic}
	The anti-periodic operator $\mathcal{K_{G}}$ is a rank 1 perturbation of $\mathcal{K}$.  Moreover, 
	\begin{equation*}
	\mathcal{T} f (x) = \langle f, u_1\rangle u_1(x) = \int_0^1 f(y) \dd{y},
	\end{equation*}
	which is simply a constant often called the ``DC'' component of $f(x)$.
\end{theorem}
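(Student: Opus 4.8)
The plan is to run the four-step scheme of \cref{sec:perturbation}, which for the anti-periodic problem collapses to a one-line computation followed by a scalar eigenvalue problem. I would begin from the anti-periodic Green's function already obtained in \cref{sec:Anti-Periodic Resolvent}: for $0\le x\le y\le1$ it is $G_0(x,y)=\frac14(1+2x-2y)$, with the symmetric expression for $0\le y\le x\le1$; equivalently, $G_0(x,y)=-\frac12\,|x-y|+\frac14$ for all $x,y\in[0,1]$, as listed in \cref{tab:Green}.

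For Step~1 (and hence Step~2), I would form the perturbation kernel $T(x,y)=\kappa(x,y)-G_0(x,y)$, where $\kappa(x,y)=-\frac12\,|x-y|$ is as in \cref{eq:kappa}. On $x\le y$ one has $\kappa(x,y)=\frac12(x-y)$, so
\begin{equation*}
T(x,y)=\frac12(x-y)-\frac14\bigl(1+2x-2y\bigr)=-\frac14 ,
\end{equation*}
the singular term $-\frac12\,|x-y|$ cancelling exactly; since $\kappa$ and $G_0$ are both symmetric, $T(x,y)=T(y,x)$, so $T(x,y)\equiv-\frac14$ on all of $[0,1]^2$.

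Steps~3 and~4 are then immediate: $\mathcal{T}f(x)=\int_0^1 T(x,y)\,f(y)\dd{y}=-\frac14\int_0^1 f(y)\dd{y}$, a function constant in $x$. Hence the range of $\mathcal{T}$ is the one-dimensional space of constants, so $\mathcal{K_{G}}=\mathcal{K}-\mathcal{T}$ is a rank-$1$ perturbation of $\mathcal{K}$; moreover $\mathcal{T}$ annihilates every $f$ with $\int_0^1 f=0$ and scales the constant function by $-\frac14$, giving the single nonzero eigenvalue $\lambda=-\frac14$ with eigenfunction $u_1\equiv1$. In the notation $\langle f,g\rangle=\int_0^1 f g\dd{x}$ this is the asserted spectral resolution $\mathcal{T}f(x)=\lambda\,\langle f,u_1\rangle\,u_1(x)$, i.e.\ $\mathcal{T}f$ is a constant multiple of the ``DC component'' $\int_0^1 f(y)\dd{y}$ of $f$.

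I do not expect a genuine obstacle: the only point of substance is the exact cancellation of the $-\frac12\,|x-y|$ term in $\kappa-G_0$, which is transparent from the explicit $G_0$, and the Step~4 matrix is $1\times1$. This case is worth recording mainly because it sits at the minimal end of the rank hierarchy of \cref{tab:classif}: among all the GSARC and non-GSARC problems treated here, the anti-periodic operator is the closest to the base nonlocal operator $\mathcal{K}$.
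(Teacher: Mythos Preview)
Your proposal is correct and follows exactly the paper's approach: compute $T(x,y)=\kappa(x,y)-G_0(x,y)=-\tfrac14$ from the anti-periodic Green's function of \cref{sec:Anti-Periodic Resolvent}, observe that $\mathcal{T}f$ is constant, and read off the single eigenpair $\lambda=-\tfrac14$, $u_1\equiv1$. You in fact supply more detail than the paper (which simply states $T(x,y)=-\tfrac14$ and the eigenpair), and you are right to phrase the resolution as $\mathcal{T}f=\lambda\,\langle f,u_1\rangle u_1$; the theorem's displayed identity $\mathcal{T}f(x)=\langle f,u_1\rangle u_1(x)=\int_0^1 f(y)\dd{y}$ suppresses the factor $-\tfrac14$, which your computation makes explicit.
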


\subsection{Radoux Boundary Conditions}
\label{sec:Radoux Problem}
The Green's function $G_0(x,y)$ corresponding to this problem is explicitly given in \cref{sec:Radoux Resolvent} as
\begin{align*} 
G_0(x,y) = \frac{1}{10} \(
10 x - 18 xy + 5 x y^3+ 5 x^3 y\), 
\end{align*}
for $0\le x\le y \le 1$. 
That of $T(x,y)$ is given by 
\begin{align*} 
T(x,y) =
\frac{1}{10} \(
-5 x -5 y + 18  xy - 5 x y^3 - 5 x^3 y\). 
\end{align*}
This is again symmetric, viz. $T(x,y)=T(y,x)$, and we only need a single integral to calculate the corresponding matrix operator. 
\begin{hide}
	Since $T(x,y)$ is a cubic polynomial in $x$, the eigenvectors of the matrix problem are of the form $f(x)=a_0+a_1 x+a_2 x^2 +a_3 x^3$. The matrix of the finite rank operator is
	\begin{equation}
	M_{\mathcal{T}}= \begin{pmatrix}
	-\frac{1}{4} & - \frac{1}{6} &  - \frac{1}{8} &  - \frac{1}{10}  \\ \\
	\frac{11}{40} & \frac{1}{4} &  \frac{1}{5} &  \frac{229}{1400} \\ \\
	0 & 0 &   0 &  0 \\ \\ 
	-\frac{1}{4} & - \frac{1}{6} &  - \frac{1}{8} &  - \frac{1}{10} \\ \\ 
	\end{pmatrix}.
	\end{equation}
	Its eigenvalues are: $\lambda_1=\lambda_2=0$, $\lambda_3=\frac{1}{420} \left(-21- \sqrt{2982}\right)$, and $\lambda_4=\frac{1}{420} \left(-21+ \sqrt{2982}\right)$. The corresponding eigenvectors are
	\begin{equation}\label{eq:pLadoux}
	\begin{split}
	u_1(x)&=\frac{1}{8} - \frac{15}{16} x+ x^2, \\
	u_2(x)&=\frac{19}{140}- \frac{45}{56} x+ x^3, \\
	u_3(x)&=1- \frac{456 +11\sqrt{2982}}{10 \(49+ \sqrt{2982}\)}  x + x^3, \\
	u_4(x)&=1- \frac{456 -11\sqrt{2982}}{10 \(49- \sqrt{2982}\)}  x + x^3.
	\end{split}
	\end{equation}
	In conclusion, we have the following theorem. 
	\begin{theorem}\label{thm:Radoux}
		The Radoux operator $\mathcal{K_{G}}$ is a rank 4 perturbation of $\mathcal{K}$. Moreover, \begin{equation*}
		\mathcal{T} f (x) = \sum_{k=1}^4 \langle f, u_k\rangle u_k(x),
		\end{equation*}
		where $u_k(x)$, $k=1, 2, 3, 4$, are defined in \cref{eq:pLadoux}.
	\end{theorem}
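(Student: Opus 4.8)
The plan is to follow the four-step scheme set out at the start of \cref{sec:perturbation}, with $G_0$ the Radoux Green's function recorded in \cref{sec:Radoux Resolvent}. First I would subtract $G_0(x,y)$ from $\kappa(x,y)=-\tfrac12|x-y|$: on $0\le x\le y\le 1$ we have $\kappa(x,y)=\tfrac12(x-y)$, and a short polynomial simplification gives
\[
T(x,y)=\kappa(x,y)-G_0(x,y)=\tfrac{1}{10}\bigl(-5x-5y+18xy-5x^3y-5xy^3\bigr).
\]
This expression is symmetric under $x\leftrightarrow y$, so the same formula represents $T$ on $0\le y\le x\le 1$; hence the split in \cref{eq:Txy} is unnecessary and one may work with a single integral over $[0,1]$.

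Next, since $T(x,y)$ has degree at most $3$ in $x$ with coefficients polynomial in $y$, the image $\mathcal{T}f(x)=\int_0^1 T(x,y)f(y)\,\dd{y}$ always lies in $\linspan\{1,x,x^2,x^3\}$. Thus $\mathcal{T}$ has finite rank, every eigenfunction attached to a nonzero eigenvalue has the form $f(x)=a_0+a_1x+a_2x^2+a_3x^3$, and the eigenproblem $\mathcal{T}f=\lambda f$ reduces to a $4\times 4$ matrix eigenproblem. I would obtain the matrix $M_{\mathcal{T}}$ by applying $\mathcal{T}$ to each monomial $x^k$, $k=0,1,2,3$ --- which amounts to evaluating the moments $\int_0^1 y^{j+k}\,\dd{y}=1/(j+k+1)$ and reading off the coefficients of $1,x,x^2,x^3$ --- and note in passing that $M_{\mathcal{T}}$ equals the coefficient array of $T$ times the $4\times 4$ Hilbert matrix, so that $\rank \mathcal{T}=\rank M_{\mathcal{T}}$.

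Finally I would diagonalize $M_{\mathcal{T}}$: factor its characteristic polynomial, list the eigenvalues (two rational, two carrying the radical $\sqrt{2982}$, in the same spirit as the cubic-kernel Kre\u{\i}n--von Neumann computation of \cref{thm:KvN}), and solve the corresponding linear systems to recover the eigenvectors $u_1,\dots,u_4$ of \cref{eq:pLadoux}. Because $\mathcal{T}$ is self-adjoint with a symmetric kernel, the spectral theorem then yields $\mathcal{T}f(x)=\sum_{k}\langle f,u_k\rangle u_k(x)$ after the usual normalization of the $u_k$ against $\langle\cdot,\cdot\rangle$, and the number of nonzero eigenvalues identifies the rank of the perturbation $\mathcal{K_G}-\mathcal{K}$. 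The first two stages are routine polynomial bookkeeping; the real work --- and the main obstacle --- is the last stage: carrying $\sqrt{2982}$ through the eigenvector solves, and, given the atypically sparse coefficient array of the Radoux kernel (its $x^2$-column vanishes identically), being careful about exactly which eigenvalues vanish, since that is what pins down the rank.
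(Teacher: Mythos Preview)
Your approach is exactly the paper's: compute $T(x,y)$, observe it is (at most) cubic in $x$, reduce the eigenproblem to that of a $4\times 4$ matrix in the monomial basis $\{1,x,x^2,x^3\}$, and diagonalize. The paper carries this out explicitly, obtaining
\[
M_{\mathcal{T}}=\begin{pmatrix}
-\tfrac14 & -\tfrac16 & -\tfrac18 & -\tfrac{1}{10}\\[2pt]
\tfrac{11}{40} & \tfrac14 & \tfrac15 & \tfrac{229}{1400}\\[2pt]
0 & 0 & 0 & 0\\[2pt]
-\tfrac14 & -\tfrac16 & -\tfrac18 & -\tfrac{1}{10}
\end{pmatrix},
\]
with eigenvalues $\lambda_1=\lambda_2=0$ and $\lambda_{3,4}=\tfrac{1}{420}\bigl(-21\mp\sqrt{2982}\bigr)$.

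The obstacle you flag at the end is in fact decisive and kills the statement rather than the proof. Because $T(x,y)$ carries no $x^2$ term and its constant and $x^3$ coefficients (as functions of $y$) coincide, the third row of $M_{\mathcal{T}}$ vanishes and the first and fourth rows are equal, so $\rank M_{\mathcal{T}}=2$. Hence the perturbation has rank $2$, not $4$, and the theorem as stated is false; your scheme, correctly executed, would disprove it. The correct formulation---rank $2$, with the two eigenvectors lying in $\linspan\{x,\,1+x^3\}$---is what the paper ultimately records for the Radoux case, and one can shortcut the $4\times 4$ calculation by working directly in that two-dimensional subspace from the start.
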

\end{hide}
\begin{align*}
\mathcal{T} f(x) &= \int_0^1 \frac{1}{10} \(
-5 x -5 y + 18  xy - 5 x y^3 - 5 x^3 y\) f(y) \dd{y} \\
&= \frac{x}{10} \int_0^1 \( -5 + 18 y -5y^3\) f(y) \dd{y} 
-\frac{1+x^3}{2} \int_0^1 y f(y) \dd{y}.
\end{align*}
Hence, the eigenvalue problem corresponding to this operator, $\mathcal{T} f(x) = \lambda f(x)$, leads to a cubic form of the eigenvector $f(x)= a_0 x + a_1 \left(1 + x^3\right)$, and the matrix eigenvalue problem 
\[
\begin{pmatrix}
\frac{307}{700} & \frac{1}{4} \\ \\
-\frac{7}{20} & - \frac{1}{6}
\end{pmatrix}
\begin{pmatrix}
a_0 \\
a_1 
\end{pmatrix} \,
= \lambda
\begin{pmatrix}
a_0 \\
a_1 
\end{pmatrix} .
\]
Its eigenvalues are $\lambda_1=\frac{1}{420} \left(-21- \sqrt{2982}\right)$, and $\lambda_2=\frac{1}{420} \left(-21+ \sqrt{2982}\right)$. The corresponding eigenvectors are
\begin{equation}\label{eq:pLadoux}
\begin{split}
u_1(x) &= x - \frac{5}{921}\(126 + \sqrt{2982}\) \(1 + x^3\), \\
u_2(x) &= x + \frac{5}{921}\(-126 + \sqrt{2982}\) \(1 + x^3\).
\end{split}
\end{equation}
In conclusion, we have the following theorem. 
\begin{theorem}\label{thm:Radoux}
	The Radoux operator $\mathcal{K_{G}}$ is a rank 2 perturbation of $\mathcal{K}$. Moreover, \begin{equation*}
	\mathcal{T} f (x) = \langle f, u_1\rangle u_1(x) + \langle f, u_2\rangle  u_2(x),
	\end{equation*}
	where $u_1(x)$ and $u_2(x)$ are defined in \cref{eq:pLadoux}.
\end{theorem}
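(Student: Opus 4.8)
\medskip
\noindent\emph{Proof plan.} The argument follows the four-step scheme of \cref{sec:perturbation}, specialised to the Radoux case. First I would form $T(x,y)=\kappa(x,y)-G_0(x,y)$ for $0\le x\le y\le 1$, using the Radoux Green's function from \cref{sec:Radoux Resolvent} (equivalently \cref{tab:Green}). The structural point is that the singular term $-\tfrac12|x-y|$ in $\kappa$ cancels exactly against the $-\tfrac12|x-y|$ in $G_0$, so $T(x,y)$ is a genuine polynomial and, by inspection, symmetric: $T(x,y)=T(y,x)=\tfrac{1}{10}\bigl(-5x-5y+18xy-5xy^3-5x^3y\bigr)$. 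Hence Step~2 is automatic and the split in \cref{eq:Txy} is not needed.

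Substituting $T$ into \cref{eq:gen-perturb} and regrouping by powers of $x$, I would then establish
\begin{equation*}
\mathcal{T}f(x)=\Bigl(\tfrac{1}{10}\int_0^1(-5+18y-5y^3)f(y)\dd{y}\Bigr)\,x-\Bigl(\tfrac12\int_0^1 y\,f(y)\dd{y}\Bigr)(1+x^3).
\end{equation*}
This is the crux: although $T(\cdot,y)$ is cubic in $x$, the range of $\mathcal{T}$ is the \emph{two}-dimensional space $\operatorname{span}\{x,\,1+x^3\}$. Consequently every eigenfunction for a nonzero eigenvalue lies in this space, so one may take $f(x)=a_0x+a_1(1+x^3)$, and $\mathcal{T}f=\lambda f$ collapses to a $2\times2$ matrix eigenvalue problem. (Running the naive Ansatz $f(x)=a_0+a_1x+a_2x^2+a_3x^3$ instead would give a $4\times4$ matrix with a two-dimensional kernel, as in the suppressed computation; recognising the range directly is what makes the rank transparent.)

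Evaluating $\mathcal{T}$ on the basis $\{x,\,1+x^3\}$ uses only the elementary moments $\int_0^1 y^k\dd{y}$ and yields the matrix $M=\begin{pmatrix}\tfrac14 & \tfrac{307}{700}\\ -\tfrac16 & -\tfrac{7}{20}\end{pmatrix}$ (up to the choice of basis ordering). Its characteristic polynomial has two distinct real roots $\lambda_{1,2}=\tfrac{1}{420}(-21\mp\sqrt{2982})$ with $\det M=\lambda_1\lambda_2\ne0$, so $\mathcal{T}$ has rank exactly $2$; lifting the corresponding matrix eigenvectors back to polynomials in $x$ produces $u_1,u_2$ as in \cref{eq:pLadoux}. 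Finally, $\mathcal{T}$ is a self-adjoint finite-rank operator (its kernel $T$ is real and symmetric), so the spectral theorem gives it the stated diagonal form $\mathcal{T}f=\langle f,u_1\rangle u_1+\langle f,u_2\rangle u_2$ once the eigenfunctions are normalised appropriately.

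The only genuinely nonroutine step is the second one: seeing that the cubic kernel $T(x,y)$ is in fact a rank-two symmetric kernel, spanned by the non-obvious pair $\{x,\,1+x^3\}$ rather than by the four monomials $1,x,x^2,x^3$. This observation is exactly what caps the perturbation rank at $2$; everything after it is a finite, explicit computation.
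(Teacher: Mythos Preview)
Your proposal is correct and follows the paper's own argument essentially step for step: both compute the symmetric kernel $T(x,y)$, recognise that the range of $\mathcal{T}$ is $\operatorname{span}\{x,\,1+x^3\}$ (the key rank-reducing observation you rightly flag as the crux), reduce to the same $2\times2$ eigenproblem, and obtain the eigenvalues $\tfrac{1}{420}(-21\mp\sqrt{2982})$ and the eigenfunctions of \cref{eq:pLadoux}. Your matrix $M=\begin{pmatrix}\tfrac14 & \tfrac{307}{700}\\ -\tfrac16 & -\tfrac{7}{20}\end{pmatrix}$ is in fact the correct one---the paper's displayed matrix has its entries arranged so that its trace does not equal the sum $-\tfrac{1}{10}$ of the stated eigenvalues---but the paper's eigenvalues and eigenvectors agree with yours, so the approaches coincide.
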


\subsection{GSARC Boundary Conditions with Non-Zero Discriminant}
\label{sec:Perturb GSARC with Non-Zero Discriminant}
For $\Delta=\delta-\alpha- 2\beta+ \beta^2 + \alpha \delta \neq 0$ the Green's function developed in \cref{sec:GSARC with Non-Zero Discriminant Resolvent} is a quadratic polynomial given, for $0\le x\le y\le 1$ by
\begin{align*}
G_0(x,y) = \frac{-1+\delta - \(\alpha + \beta\) x + \(\beta-\delta\) y+ \(\beta^2+ \alpha \delta \) x (1-y)}{\delta-\alpha- 2 \beta + \beta^2 + \alpha \delta} .
\end{align*}
The kernel of the perturbation is then
\begin{align*}
T(x,y) = c_{00} + c_{10} x+ c_{01} y + c_{11} x y  
\end{align*}
where
\begin{equation*}
c_{00}= \frac{1-\delta}{\Delta},
\end{equation*}
\begin{equation*}
c_{10}=c_{01}= \frac{\alpha+ \delta-\beta^2 - \alpha \delta}{2 \Delta},
\end{equation*}
and
\begin{equation*}
c_{11}=\frac{\beta^2 + \alpha \delta}{2 \Delta}.
\end{equation*}
The eigenvectors corresponding to this finite rank operator are of the form $f(x)=a_0+ a_1 x$, and the $2\times 2$ matrix of the operator $M_{\mathcal{T}}$ has the entry
$m_{11}=\frac{-4 - \alpha + 3 \delta + \beta^2 + \alpha \delta}{4 \Delta}$,
$m_{12}=\frac{-3 - \alpha + 2 \delta + \beta^2 + \alpha \delta}{6 \Delta}$,
$m_{21}=\frac{\alpha + \delta}{2 \Delta}$, and
$m_{22}=\frac{3 \alpha + 3 \delta + \beta^2 + \alpha \delta}{12 \Delta}$.
Its eigenvalues are
$\lambda_1=\frac{-6 - 3 \alpha + 3 \delta + \beta^2 + \alpha \delta - 2 \sqrt{\tilde{\Delta}}}{12 \Delta}$, 
$\lambda_2=\frac{-6 - 3 \alpha + 3 \delta + \beta^2 + \alpha \delta + 2 \sqrt{\tilde{\Delta}}}{12 \Delta}$, where
$$\tilde{\Delta} \define 9+ 9 \alpha + 3 \alpha^2 - 6 \beta^2 - 3 \alpha \beta^2 + \beta^4 - 9 \delta - 9 \alpha \delta + 3 \alpha^2 \delta + 3 \beta^2 \delta + 2 \alpha \beta^2 \delta + 3 \delta^2 + 3 \alpha \delta^2 + \alpha^2 \delta^2$$ 
and the corresponding eigenvectors are
$$u_1(x) = \frac{2 \(3 - \beta^2 - 3 \delta- \alpha \delta -\sqrt{\tilde{\Delta}} \)}{3 (\alpha+\delta)}+ x$$
and 
$$u_2(x) = \frac{2\(3 - \beta^2 - 3 \delta- \alpha \delta +\sqrt{\tilde{\Delta}} \)}{3 (\alpha+\delta)}+ x.$$

\subsection{GSARC Boundary Conditions with Zero Discriminant}
\label{sec:Perturb GSARC with Zero Discriminant}
The Green's function $G_0(x,y)$ is detailed in \cref{sec:GSARC with Zero Discriminant  Resolvent}. It assumes the form $G_0(x,y)=-\frac{1}{2} |x-y|- T(x,y)$, where $T(x,y)$ is the kernel of the perturbation given by the expression 
\begin{equation} \label{eq:GSARC:T}
\begin{split}
T(x,y) &=-c_{00} + \(\frac{1}{2} - c_{10} \)x - \(c_{01}+\frac{1}{2} \) y - c_{20} x^2 - c_{11} x y - c_{02} y^2 - c_{30} x^3 \\
& 	\hspace{1em} - c_{21} x^2 y - c_{12} x y^2 - c_{03} y^3 -c_{31} x^3 y- c_{13} x y^3 \\
&\define	d_{00} + d_{10} x +d_{10} y +d_{20} x^2 +d_{11} x y +d_{20} y^2 + d_{30} x^3 + d_{21} x^2 y \\
& 	\hspace{1em} + d_{21} x y^2 +d_{30} y^3 +d_{31} x^3 y+ d_{31} x y^3 .
\end{split}
\end{equation}
By virtue of \cref{rk3.4}, $T(x,y)=T(y,x)$ for all $x, y \in [0,1]$. This is also a cubic polynomial in $x$. The eigenvector of the matrix problem is of the form $f(x)=a_0+a_1 x+a_2 x^2 +a_3 x^3$. The corresponding matrix is $4 \times 4$ 
\begin{equation*}
M_{\mathcal{T}}= \begin{pmatrix}
m_{ij},
\end{pmatrix}.
\end{equation*}
where explicit forms of these entries, obtained using Wolfram Mathematica\textsuperscript{\textregistered} are given in \cref{Appendix:B}.

Its four eigenvalues $\lambda_k$, and four eigenvectors, $u_k(x)$, $k=1, 2, 3, 4$ are too complicated to list. 

We now have:
\begin{theorem}\label{thm:pGSARZero-evs}
	The GSARC operator $\mathcal{K_{G}}$ is an up-to-rank 4 perturbation of $\mathcal{K}$. Moreover, \begin{equation*}
	\mathcal{T} f (x) = \sum_{k=1}^{\rank\(\mathcal{K_{G}}\)} \langle f, u_k\rangle u_k(x).
	\end{equation*}
\end{theorem}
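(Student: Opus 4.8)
The plan is to run, \emph{mutatis mutandis}, the four-step scheme set up at the beginning of \cref{sec:perturbation}; the only structural novelty relative to the cases already treated is that here $G_0$ is a genuine polynomial in $x$ and $y$, rather than a polynomial plus $-\tfrac12|x-y|$ with a linear defect, so the matrix of the perturbation is $4\times4$. \textbf{Steps 1 and 2.} Take $G_0(x,y)$ from \cref{sec:GSARC with Zero Discriminant Resolvent}, a polynomial of degree at most $3$ in each of $x$ and $y$ whose coefficients $c_{ij}$ are listed in \cref{Appendix:A}, and set $T(x,y)=\kappa(x,y)-G_0(x,y)$ as in \cref{eq:gen-perturb}; this is the expression \cref{eq:GSARC:T}. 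Using the relations $c_{10}-c_{01}=1$, $c_{20}=c_{02}$, $c_{21}=c_{12}$, $c_{30}=c_{03}$, $c_{31}=c_{13}$ from \cref{rk3.4}, the $-\tfrac12|x-y|$ part of $\kappa$ cancels against the odd part of $G_0$, so $T(x,y)=T(y,x)$ on all of $[0,1]^2$ and the split of the integral in \cref{eq:Txy} is not needed.

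\textbf{Step 3.} Since $T(x,y)$ has degree at most $3$ in $x$, write $T(x,y)=\sum_{i=0}^{3}q_i(y)\,x^{i}$ with each $q_i$ of degree at most $3$; then
\[
\mathcal{T}f(x)=\int_0^1 T(x,y)f(y)\dd{y}=\sum_{i=0}^{3}\Bigl(\int_0^1 q_i(y)f(y)\dd{y}\Bigr)x^{i},
\]
so $\ran\mathcal{T}\subseteq\linspan\{1,x,x^2,x^3\}$, and every eigenfunction of $\mathcal{T}$ belonging to a nonzero eigenvalue is a cubic polynomial $f(x)=a_0+a_1x+a_2x^2+a_3x^3$. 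Substituting such an $f$ back in produces the $4\times4$ matrix $M_{\mathcal{T}}=(m_{ij})$ with $m_{ij}=\int_0^1 q_i(y)\,y^{j}\dd{y}$. Writing $Q$ for the matrix of coefficients of the $q_i$ and $G$ for the invertible $4\times4$ Gram matrix $\bigl(1/(i+j+1)\bigr)$ of $\{1,y,y^2,y^3\}$, one has $M_{\mathcal{T}}=QG$; moreover $Q$ is symmetric because $T$ is, so $GM_{\mathcal{T}}$ is symmetric and $\rank M_{\mathcal{T}}=\rank Q=\dim\linspan\{q_0,q_1,q_2,q_3\}=\rank\mathcal{T}\le4$. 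The entries $m_{ij}$ are rational functions of the two free parameters --- with $\delta=(\alpha+2\beta-\beta^2)/(\alpha+1)$ in the generic case and $\beta=1$ in the limiting sub-cases --- and are recorded in \cref{Appendix:B}.

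\textbf{Step 4.} Since $T$ is real and symmetric, $\mathcal{T}$ is self-adjoint on $L^2(0,1)$ --- equivalently $M_{\mathcal{T}}$ is self-adjoint with respect to the $G$-inner product --- hence it is diagonalizable with real eigenvalues $\lambda_1,\dots,\lambda_4$ and an eigenbasis orthogonal in $L^2(0,1)$. The spectral theorem for self-adjoint finite-rank operators then yields the stated resolution $\mathcal{T}f=\sum_k\langle f,u_k\rangle u_k$ after normalizing the eigenfunctions (rescaling the $L^2$-orthonormal eigenfunctions by $\sqrt{|\lambda_k|}$, with a sign on the negative eigenvalues), the sum running over the nonzero eigenvalues, which number $\rank M_{\mathcal{T}}$. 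Since $\rank M_{\mathcal{T}}\le4$ automatically, it remains to see that the rank can be as small as $3$ or $2$: this happens exactly when the polynomials $q_0,\dots,q_3$ become linearly dependent, i.e. for special parameter values, while at the Kre\u{\i}n--von Neumann point $\alpha=-1$, $\beta=1$, $\delta=1$ the rank equals $4$ (\cref{thm:KvN}); this accounts for the entry ``$2,3,\text{ or }4$'' in \cref{tab:classif}.

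\textbf{The main obstacle} is computational, not conceptual: the $c_{ij}$, hence the sixteen entries $m_{ij}$, are unwieldy rational functions of $\alpha$ and $\beta$, and the three degenerate sub-cases (i) $\alpha\to-1$, (ii) $\delta\to1$, and (iii) $\alpha=-1,\beta=1,\delta=1$ must be treated separately --- in each one re-deriving the Riesz-projection kernel of \cref{tab:RieszProj} and verifying $D\equiv0$. Carrying the symbolic diagonalization of a $4\times4$ matrix depending on two parameters through to closed form is neither practical by hand nor illuminating, which is why the entries of $M_{\mathcal{T}}$ are relegated to \cref{Appendix:B} and only the rank bound is asserted, rather than explicit formulas for the $\lambda_k$ and $u_k$.
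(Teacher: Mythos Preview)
Your proposal is correct and follows essentially the same four-step scheme the paper uses in \cref{sec:Perturb GSARC with Zero Discriminant}: derive the symmetric cubic kernel \cref{eq:GSARC:T} via \cref{rk3.4}, reduce $\mathcal{T}$ to the $4\times4$ matrix $M_{\mathcal{T}}$ on $\linspan\{1,x,x^2,x^3\}$, and read off the spectral resolution. Your factorization $M_{\mathcal{T}}=QG$ with $Q$ symmetric and $G$ the Hilbert Gram matrix, together with the resulting self-adjointness of $M_{\mathcal{T}}$ in the $G$-inner product, is a clean justification of diagonalizability that the paper leaves implicit (it simply asserts the eigenvalues and eigenvectors are ``too complicated to list'' and defers the rank degenerations to \cref{remark4.12}); this is added rigor rather than a different route.
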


\begin{remark}\label{remark4.12}
	The case $\beta=-\alpha$ merits special attention since $T(x,y)$ in \cref{eq:GSARC:T} reduces to a quadratic polynomial; see also the expressions of $c_{ij}$'s in \cref{Appendix:A}. Since $\Delta=0$, we also have $\delta=-\alpha$. For $\alpha \neq 0, -1$, the Green's function reduces to 
	\begin{equation}
	G_0(x,y)= \frac{4+\alpha}{12 \(1+\alpha\)} + \frac{\alpha}{2 \(1+\alpha\)} x - \frac{2+\alpha}{2 \(1+\alpha\)} y - \frac{\alpha}{1+\alpha}     
	+\frac{1}{2} x^2 + \frac{1}{2} y^2.
	\end{equation}
	The kernel of the Riesz projection is given by $p(x,y)=-1$, and therefore 
	\begin{equation}
	T(x,y)= - \frac{4+\alpha}{12 \(1+\alpha\)} + \frac{1}{2 \(1+\alpha\)} x + \frac{1}{2 \(1+\alpha\)} y + \frac{\alpha}{1+\alpha} x y+      
	+\frac{1}{2} x^2 + \frac{1}{2} y^2.
	\end{equation}
	Proceeding as before, we obtain a rank 3 perturbation with eigenvalues 
	$\lambda_1=\frac{\alpha}{12 (1+\alpha)}$, 
	\\ 
	$\lambda_2=\frac{-5 - \sqrt{30}}{60}$, $\lambda_3=\frac{-5 + \sqrt{30}}{60}$ and corresponding eigenvectors 
	\begin{align*}
	u_1(x) &=  -\frac{1}{2} + x \\
	u_2(x) &= \frac{10 - \sqrt{30}}{30} -x + x^2\\
	u_3(x) &= \frac{10 + \sqrt{30}}{30} -x + x^2. 
	\end{align*}
	The cases $\alpha =-1$ and $\alpha=0$ have already been treated, i.e., the Kre\u{\i}n-von Neumann case (rank 4) and the Neumann case (rank 2), respectively. In light of these discussions, we note that rank 1 perturbation in the GSARC problem with $\Delta=0$ cannot occur. 
\end{remark} 

\subsection{GSARS Boundary Conditions} \label{sec:GSARS Boundary Conditions}

We briefly discuss the GSARS BC case, in the form of \cref{eq:GSARS}. It corresponds in our notation to $\beta=-\gamma=0$ (see Remark~\ref{rk I.2}). It is indeed equivalent to the generalized Robin problem treated in \cref{sec:Robin Resolvent} and \cref{sec:Perturb Robin Problem}. We focus on the dependence on the angles $\theta_0$ and $\theta_1$. The discriminant condition for not having a zero eigenvalue is $\Delta = \delta-\alpha+ \alpha \delta \neq 0$, which is equivalent to $\tilde{\Delta} = \cos (\theta_0 - \theta_1) + \cos (\theta_0+\theta_1) - 2 \sin (\theta_0 - \theta_1)\neq 0$.
In this case, the Riez projection $P\equiv 0$. For $0\le x \le y\le 1$, with 
\begin{align*}
c &=(-1+z) \sin(\sqrt{z} - \theta_0-\theta_1) + (-1+2\sqrt{z}-z) \sin(\sqrt{z} + \theta_0-\theta_1) \\
& \hspace{1em} + (-1-2 \sqrt{z}-z) \sin (\sqrt{z} - \theta_0+\theta_1) +(-1+z) \sin(\sqrt{z} + \theta_0+\theta_1) 
\end{align*}
the resolvent kernel is such that 
		\begin{align*} 
		c \, \sqrt{z} \, G(z,x,y)
		\end{align*}
		is equal to 
		\begin{align*} 
  \left((-1 - \sqrt{z}) \sin(\sqrt{z} \, x - \theta_0) + (-1+ \sqrt{z}) \sin(\sqrt{z} \,  x + \theta_0) \right) 
		\, \left((-1 - \sqrt{z}) \sin(\sqrt{z} \, (-1+y) - \theta_1) + (-1+ \sqrt{z}) \sin(\sqrt{z} \, (-1+y) + \theta_1)\right).
		\end{align*}
The Green's function corresponding to the GSARS BC with non-zero discriminant is then
\begin{equation*}
G_0(x,y)= \frac{2 \left(x \cos\theta_0 - \sin \theta_0 \right) \, \left((1-y) \, \cos \theta_1  + \sin \theta_1\right) }{\cos (\theta_0 - \theta_1) + \cos (\theta_0+\theta_1) - 2 \sin (\theta_0 - \theta_1)}. 
\end{equation*}
The kernel of the perturbation is given by
\begin{equation*}
T(x,y)= \frac{\left(\cos\theta_0 \cos \theta_1 + \sin (\theta_0+\theta_1)\right) (x+y) - 2 x y \cos \theta_0 \cos \theta_1 }{\cos (\theta_0 - \theta_1) + \cos (\theta_0+\theta_1) - 2 \sin (\theta_0 - \theta_1)}. 
\end{equation*}
The matrix of the associated finite rank operator is $2\times 2$, and is given by 
	\begin{equation*}
	M_{\mathcal{T}}= \begin{pmatrix}
	\frac{3 \cos (\theta_0 - \theta_1) - 5 \cos (\theta_0 + \theta_1)+ 4 \sin (\theta_0 - \theta_1)+2 \sin (\theta_0 + \theta_1)}{4 \left(\cos (\theta_0 - \theta_1) + \cos (\theta_0+\theta_1) - 2 \sin (\theta_0 - \theta_1)\right)} & \frac{2\cos (\theta_0-\theta_1)- 4 \cos (\theta_0+\theta_1)+ 3 \sin (\theta_0-\theta_1)+ \sin (\theta_0+\theta_1)}{6 \left(\cos (\theta_0 - \theta_1) + \cos (\theta_0+\theta_1) - 2 \sin (\theta_0 - \theta_1)\right)}  \\ \\
	 \frac{- \sin(\theta_0+\theta_1)}{\cos (\theta_0 - \theta_1) + \cos (\theta_0+\theta_1) - 2 \sin (\theta_0 - \theta_1)} & 
	- \frac{\cos (\theta_0 - \theta_1)+ \cos (\theta_0 + \theta_1)- 6 \sin (\theta_0 + \theta_1)}{12 \left(\cos (\theta_0 - \theta_1) + \cos (\theta_0+\theta_1) - 2 \sin (\theta_0 - \theta_1)\right) }
	\end{pmatrix}.
	\end{equation*}
The eigenvalues of the rank 2 perturbation in this case, are solutions of the quadratic equation
\begin{equation*}
    A \lambda^2 + 2 B \lambda+ C=0
\end{equation*}
where $A=48 \, \tilde{\Delta}$, $B=-20 \cos (\theta_0 - \theta_1) + 29 \cos (\theta_0 + \theta_1)- 24 \sin (\theta_0 - \theta_1)$, and $C=-\tilde{\Delta}$. 

The various GSARS cases are displayed in ~\cref{fig:GSARS} for $(\theta_0, \theta_1) \in [0, \pi)\times [0, \pi)$: Dirichlet $(0,0)$, Neumann $(\pi/2, \pi/2)$, Robin $(0, \pi/2)$,  and Radoux $(0, 3\pi/4)$. The curve represents the level sets of the discriminant condition $\tilde{\Delta}=0$. 

The case $(\theta_0, \theta_1)=(\pi/4,0)$ represents an interesting case not treated in the literature, with $\tilde{\Delta}=0$, corresponding to the boundary conditions $u'(0)+ u(0)=u(1)=0$. Its resolvent has the kernel 
\begin{equation*}
    G(z,x,y)= \frac{\left(\sqrt{z} \cos (\sqrt{z} \, x) - \sin (\sqrt{z} \, x) \right) \, \left(\sin(\sqrt{z} (1-y) ) \right)}{z \cos \sqrt{z}- \sqrt{z} \sin \sqrt{z}}.
\end{equation*}
Its Riesz projection has the kernel $p(x,y) =-3 (1-x) (1-y)$, and the nilpotent operator $D\equiv 0$.  The Green's function is given by 
\begin{equation*}
    G_0(x,y)= \frac{1}{10} \, \left(2- 2x + 15x^2 - 5x^3- 12y +12x y -15 x^2 y + 5 x^3 y + 15 y^2 - 15 x y^2 - 5 y^3+ 5 x y^3\right). 
\end{equation*}
Working as before, we obtain the kernel of the perturbation
\begin{equation} \label{T:pi}
    T(x,y)= \frac{1}{10} \, \left(-2 + 7 x- 15 x^2 + 5 x^3 + 7 y - 12 x y +15 x^2 y- 5 x^3 y - 15 y^2 + 15 x y^2 + 5 y^3 - 5 x y^3\right). 
\end{equation}
Note that $T(x,y)=T(y,x)$. The operator $\mathcal{T}$ then acts as
\begin{align*}\label{eq:GSARS-Special-perturb}
\mathcal{T} f (x) &= 
\int_{0}^1 T(x,y) f(y) \dd{y} \notag \\
&= \left(\int_{0}^1 \frac{1}{10} \left(-2+ 7 y - 15 y^2 + 5 y^3\right) f(y) \dd{y} \right) \left(1-x\right) \\
& \hspace{1em} + \left(\int_0^1 \frac{1}{10} \left(1-y\right) f(y) \dd{y}\right) \left(5 x- 15 x^2 + 5 x^3\right).
\end{align*}
The eigenvalue problem $\mathcal{T}f(x) = \lambda f(x)$
leads to a linear eigenfunctions of the form $f(x) = a_0 (-1-x) +a_1 (5 x- 15 x^2 +5 x^3)$ and the matrix eigenvalue problem
\[\begin{pmatrix}
-\frac{1}{12} & \frac{199}{420} \\
\frac{1}{30} & -\frac{1}{70}
\end{pmatrix} \,  \begin{pmatrix}
a_0  \\
a_1 \end{pmatrix} = \lambda \begin{pmatrix}
a_0  \\
a_1 
\end{pmatrix} . \]
Thus $\lambda_1=\frac{-21-\sqrt{2982}}{420}$, $\lambda_2=\frac{-21+\sqrt{2982}}{420}$, and the corresponding eigenvectors are
\begin{equation}\label{eq:pi}
u_1(x)=\frac{-14-\sqrt{2982}}{14} (1-x) + (5 x- 15 x^2 +5 x^3), \quad u_2(x)=\frac{-14+\sqrt{2982}}{14} (1-x) + (5 x- 15 x^2 +5 x^3).
\end{equation}

\begin{remark}
    The expression of the perturbation in \cref{T:pi} is cubic in $x$. An alternative analysis with an eigenfunction of the form $f(x)=a_0+a_1 x+a_2 x^2 +a_3 x^3$, with a corresponding matrix
    \begin{equation}
M_{\mathcal{T}}= \begin{pmatrix}
-\frac{9}{40} & - \frac{17}{120} &  - \frac{13}{120} &  - \frac{31}{350}  \\ \\
\frac{19}{40} & \frac{9}{40} &  \frac{3}{20} &  \frac{159}{1400} \\ \\
-\frac{3}{4} & -\frac{1}{4} &   -\frac{1}{8} &  -\frac{3}{40} \\ \\ 
\frac{1}{4} &  \frac{1}{12} & \frac{1}{24} &  \frac{1}{40} \\ \\ 
\end{pmatrix}.
\end{equation}
This leads to the same eigenvalues and eigefunctions as above, and $\lambda_0=0$, a double eigenvalue. Its SVD gives two non-zero singular values, hence a rank of 2. Notice that the determinant of $M_{\mathcal{T}}$ is zero, and so is the case of the $3\times 3$ subdeterminant extracted from the first three columns (but not the $2\times2$ subdeterminant of the first two columns), another way to conclude that the rank is indeed 2. This is a general fact for every point on the level sets in Fig.~\ref{fig:GSARS}.
\end{remark}

\begin{remark}
The discriminant condition $\tilde{\Delta}=0$ corresponds to the level sets on Fig.~\ref{fig:GSARS}. It is equivalent to $\tan \theta_1=-1+ \tan \theta_0$, leading to two possibilities: $\theta_1=\pi+\arctan\left(-1+\tan \theta_0\right)$ for $\theta_0 \in [0, \pi/4)\cup (\pi/2, \pi)$, and $\theta_1=\arctan\left(-1+\tan \theta_0\right)$ for $\theta_0 \in [\pi/4, \pi/2)$. The case $(\theta_0, \theta_1)=(\pi/2,\pi/2)$ corresponds to Neumann BCs. Curious enough, along these level sets the expressions of resolvent, Riesz projection, Green's function, and perturbation kernel are different from the calculations in this subsection (since the discriminant is zero). The problem leads to a $4\times 4$ matrix $M_{\mathcal{T}}$, but the rank of the perturbation is 2. The calculations are no different from the case $(\theta_0, \theta_1)=(\pi/4,0)$. The details of these expressions are in \cref{Appendix:C}.
\end{remark}

\begin{center}
\begin{figure}[tbhp!]
    \centering
    \includegraphics[width=0.6 \textwidth]{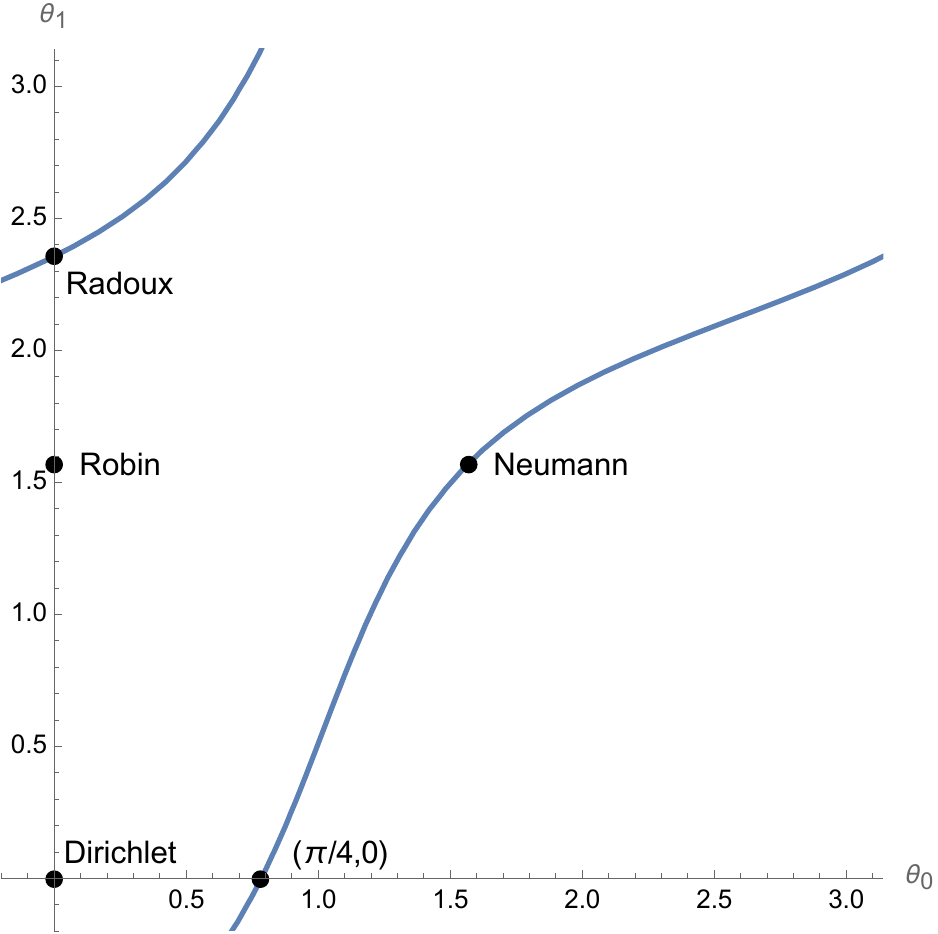}
    \caption{Examples of GSARS cases treated with respect to the discriminant condition $\cos (\theta_0 - \theta_1) + \cos (\theta_0+\theta_1) - 2 \sin (\theta_0 - \theta_1)=0$, for $\theta_0, \theta_1 \in [0, \pi)$.}
    \label{fig:GSARS}
\end{figure}
\end{center}

\subsection{Square of the Volterra Operator}
\label{sec:Perturb Square-Volt}
The earliest treatment of the eigenvalues of the operator \cref{eq:Saito-Operator} appears in the seminal paper of Lidski\u{\i} \cite{Lidskii59}, in the context of the Volterra operator--specifically the context of nonselfadjoint operators not satisfying the completeness property for Hibert-Schmidt operators. We find it also in the book of Gohberg and Kre\u{\i}n \cite[pp.~208--210]{GohbergKrein1969}, where it is referred to as a ``Volterra operator with a two dimensional imaginary component''. 

With $\mathcal{V} f(x):=\int_x^1 f(y) \dd{y}$,  one immediately obtains $\mathcal{V}^2 f(x)=\int_x^1 \left(x- y\right)f(y) \dd{y}$. This operator is then decomposed into its Hermitian components \cite{Lidskii59,GohbergKrein1969,ArlinTsek2006}: 
\begin{equation}\label{eq:Volt1}
\mathcal{V}^2 = \left(\mathcal{V}^2\right)_{R}+ \im \, \left(\mathcal{V}^2\right)_{I}
\end{equation}
where 
\begin{equation} \label{eq:Volt2}
\begin{split}
\left(\mathcal{V}^2\right)_{R} \, f &:=\frac{1}{2} \left(\mathcal{V}^2 + \mathcal{V}^{2 \ast}\right) \, f= - \frac{1}{2} \, \int_0^1 |x-y| f(y) \dd{y} , \\
\left(\mathcal{V}^2\right)_{I} \, f &:=\frac{1}{2i} \left(\mathcal{V}^2 - \mathcal{V}^{2 \ast} \right) \, f= \frac{1}{2\im} \, \int_0^1 \left(x-y\right) f(y) \dd{y} .
\end{split}
\end{equation}
The real part $\left(\mathcal{V}^2\right)_{R}$ is clearly our operator $\mathcal{K}$ in \cref{eq:Saito-Operator}. In this light, $\mathcal{K}= \mathcal{V}^2+\mathcal{T_V}$ where $\mathcal{T_V}\, f=-\im 	\left(\mathcal{V}^2\right)_{I} \, f  = - \frac{1}{2} \, \int_0^1 \left(x-y\right) f(y) \dd{y}.$

\begin{theorem}\label{thm:volterra}
	The operator $\mathcal{T_V}$ is a rank 2 perturbation of $\mathcal{K}$. 
\end{theorem}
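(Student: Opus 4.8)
The plan is to run the four-step scheme of this section, which here is almost immediate. First I would record the kernel of $\mathcal{T_V}$: from $\mathcal{T_V}f = -\frac12\int_0^1(x-y)f(y)\dd{y}$ one reads off
\[
T(x,y) = -\tfrac12\,(x-y) = -\tfrac12\,x + \tfrac12\,y ,
\]
a genuine degenerate (finite-rank) kernel defined on all of $[0,1]^2$. In contrast with every other case treated above, this kernel is \emph{anti}symmetric, $T(y,x) = -T(x,y)$; this is exactly what one expects, since $\mathcal{T_V} = -\im\,(\mathcal{V}^2)_I$ with $(\mathcal{V}^2)_I$ self-adjoint, so $\mathcal{T_V}$ is skew-adjoint. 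Consequently there is no symmetric extension to construct and no split of the integral in the sense of \cref{eq:Txy}; Steps 1--3 reduce to separating variables in the kernel.

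Separating variables gives
\[
\mathcal{T_V} f(x) = \frac12\Bigl(\int_0^1 y\,f(y)\dd{y}\Bigr) - \frac12\Bigl(\int_0^1 f(y)\dd{y}\Bigr)\,x ,
\]
so $\ran\mathcal{T_V}\subseteq\linspan\{1,x\}$ and hence $\rank\mathcal{T_V}\le 2$ immediately; in particular the eigenfunctions for $\lambda\ne 0$ must be of the form $f(x)=a_0+a_1x$. To see that the rank is exactly $2$, I would substitute $f(x)=a_0+a_1x$, use $\int_0^1 f = a_0+\tfrac{a_1}{2}$ and $\int_0^1 y f(y)\dd{y}=\tfrac{a_0}{2}+\tfrac{a_1}{3}$, and obtain that $\mathcal{T_V}$ is represented on the coefficient pair $(a_0,a_1)$ by
\[
M_{\mathcal{T_V}} = \begin{pmatrix} \tfrac14 & \tfrac16 \\[4pt] -\tfrac12 & -\tfrac14 \end{pmatrix},
\qquad \det M_{\mathcal{T_V}} = -\tfrac1{16}+\tfrac1{12} = \tfrac1{48}\neq 0 .
\]
Since $M_{\mathcal{T_V}}$ is invertible, the two linear functionals $f\mapsto\int_0^1 f$ and $f\mapsto\int_0^1 y f(y)\dd{y}$ are linearly independent, so $\rank\mathcal{T_V}=2$; equivalently $\mathcal{V}^2=\mathcal{K}-\mathcal{T_V}$ differs from $\mathcal{K}$ by an operator of rank exactly $2$, which is the assertion.

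There is essentially no obstacle here: the upper bound $\rank\mathcal{T_V}\le2$ is a one-line consequence of the kernel being a sum of two products, and non-degeneracy is a single $2\times2$ determinant. The only subtlety worth flagging is the departure from the self-adjoint pattern of the rest of the section --- because $T$ is antisymmetric, $M_{\mathcal{T_V}}$ has trace $0$ and its nonzero eigenvalues are the purely imaginary roots $\pm\tfrac{\im}{4\sqrt3}$ of $\lambda^2=-\tfrac1{48}$, consistent with $\mathcal{T_V}$ being skew-adjoint. If one wanted a full spectral resolution of $\mathcal{T_V}$ rather than merely its rank, one would diagonalize $M_{\mathcal{T_V}}$ over $\C$ and pair each right eigenvector with the corresponding left eigenvector, since the real pairing $\langle\cdot,\cdot\rangle$ no longer yields an orthogonal eigenbasis.
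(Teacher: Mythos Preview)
Your proposal is correct and follows essentially the same approach as the paper: separate variables in the degenerate kernel to see that the range lies in $\linspan\{1,x\}$, then reduce to a $2\times 2$ matrix eigenvalue problem. The paper orders coordinates as $(m,b)$ with $f(x)=mx+b$, obtaining the similar matrix $\begin{pmatrix}-\tfrac14 & -\tfrac12\\ \tfrac16 & \tfrac14\end{pmatrix}$ and explicitly writing out the eigenvalues $\pm\tfrac{\im}{2\sqrt{12}}$ and complex eigenvectors; your determinant check for rank exactly~$2$ and your remark on skew-adjointness are welcome additions.
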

\begin{proof}
	To see this, we calculate the eigenvalues and eigenvectors of this perturbation, i.e., solve the eigenvalue problem $\mathcal{T_V}\, f = \lambda f$. This statement is equivalent to the problem
	\begin{equation*}
	- \frac{1}{2} \, \int_0^1 \left(x-y\right) f(y) \dd{y} = \lambda \, f(x). 
	\end{equation*}
	Note that $\lambda\neq 0$ otherwise $f(x)\equiv 0$. This immediately implies that $f(x) = m \, x +b$, where $m= - \frac{1}{2\lambda} \, \int_0^1 \, f(y) \dd{y}$ and $b=-\frac{1}{2 \lambda} \, \int_0^1 \, y \, f(y) \dd{y}$. The problem then reduces to the matrix eigenvalue problem
	\[\begin{pmatrix}
	-\frac{1}{4} & - \frac{1}{2} \\
	\frac{1}{6} & \frac{1}{4}
	\end{pmatrix} \,  \begin{pmatrix}
	m  \\
	b \end{pmatrix} = \lambda \begin{pmatrix}
	m  \\
	b 
	\end{pmatrix} .\]
	Thus $\lambda_1=\frac{\im}{2\sqrt{12}}$, $\lambda_2=-\frac{\im}{2\sqrt{12}}$, and the corresponding complex-valued eigenvectors 
	$u_1(x)=\frac{1}{2}\left(-3+ \im\sqrt{3}\right) x+ 1$, 
	$u_2(x)=\frac{1}{2}\left(-3- \im\sqrt{3}\right) x+ 1$. This agrees with Lidski\u{\i} \cite{Lidskii59}.  
\end{proof}

\section{Conclusion}

In this article we offered a unified framework of looking at general real coupled self-adjoint BVPs, where the use of the resolvent kernel was essential to the analysis, and where the Green's formula corresponding to the integral operator formulation of Sturm-Liouville problems was obtained using an abstract and very classical formulation in \cite{KATO}. While apparently similar in formulation to the nonlocal integral operator commuting with the Laplacian corresponding to the free space Green's function, the Kre\u{\i}n-von Neumann problem was shown to be the ``farthest'', while the anti-periodic problem proved to be the ``closest'' when seen as finite-rank perturbations. In \cite{HSIteratedKernel2}, we will develop the spectral theory of the associate 
iterated \emph{Brownian bridge kernels} corresponding to this GSARC framework, and 
show how to recover the values of the spectral zeta function for the nonlocal operator \cref{eq:Saito-Operator} from the power series of the resolvent in a unified way; see also 
\cite{CavFassMcC15,FasshauerBook,AGHKLT19,AGHKLT18}.



\appendix
\section{Details of the Green's Function for GSARC BCs with \texorpdfstring{$\Delta$=0}{}}
\label{Appendix:A}
The coefficients of $G_0(x,y)$ were generated using Wolfram Mathematica\textsuperscript{\textregistered}. Noting also \cref{rk3.4}, they are given by the following equations. 
\begin{hide}
	\begin{align*}
	20 (3+\alpha^2 +\beta^2 - \alpha \beta + 3 \alpha - 3 \beta)^2 \, c_{00}
	&= 60+ 120 \alpha + 84 \alpha^2 + 20 \alpha^3-75 \beta \\
	&  
	- 87 \alpha \beta - 28 \alpha^2 \beta +54 \beta^2  \\
	&  
	+ 34 \alpha \beta^2 -23 \beta^3 - 7 \alpha \beta^3 +4 \beta^4
	\end{align*}
\end{hide}

\begin{align*}
20 (3+\alpha^2 +\beta^2 - \alpha \beta + 3 \alpha - 3 \beta)^2 \, c_{00}
&= 4 \(15 + 30 \alpha + 21 \alpha^2 +5 \alpha^3\) \\
&  
- \(75 + 87 \alpha + 28 \alpha^2\) \beta   \\
&  
+ 2 \(27+17 \alpha+ 2 \alpha^2\) \beta^2- \(23 + 7 \alpha\) \beta^3 +4 \beta^4
\end{align*}

\begin{hide}
	\begin{align*}
	20 (3+\alpha^2 +\beta^2 - \alpha \beta + 3 \alpha - 3 \beta)^2 \, c_{10}
	&= 60 \alpha + 120 \alpha^2 + 84 \alpha^3 + 20 \alpha^4 - 30 \beta \\
	& - 135 \alpha \beta - 123 \alpha^2 \beta-34 \alpha^3 \beta +15 \beta^2 \\
	& + 72 \alpha \beta^2 + 33 \alpha^2 \beta^2 + 9 \beta^3 - 7 \alpha \beta^3 - 4 \beta^4
	\end{align*}
\end{hide}

\begin{align*}
20 (3+\alpha^2 +\beta^2 - \alpha \beta + 3 \alpha - 3 \beta)^2 \, c_{10}
&=  4 \alpha \(15+ 30 \alpha + 21\alpha^2 + 5 \alpha^3\) \\
& -\(30 + 135 \alpha + 123 \alpha^2+ 34\alpha^3\) \beta \\
& + 3 \( 5+ 24 \alpha+ 11 \alpha^2\) \beta^2 + \(9 - 7 \alpha\) \beta^3 - 4 \beta^4
\end{align*}

\begin{hide}
	\begin{align*}
	\frac{20}{3} (3+\alpha^2 +\beta^2 - \alpha \beta + 3 \alpha - 3 \beta)^2 \, c_{01} &= -60 - 100 \alpha -60 \alpha^2-12 \alpha^3 +110 \beta \\
	& +115 \alpha \beta + 39 \alpha^2 \beta+2 \alpha^3 \beta -95 \beta^2 \\
	& -56 \alpha \beta^2 -9 \alpha^2 \beta^2 + 43 \beta^3 +11 \alpha \beta^3 - 8 \beta^4
	\end{align*}
\end{hide}

\begin{align*}
\frac{20}{3} (3+\alpha^2 +\beta^2 - \alpha \beta + 3 \alpha - 3 \beta)^2 \, c_{01} &= 
-4 \(15+ 25 \alpha + 15 \alpha^2 + 3 \alpha^3 \) \\
& + \(110+115 \alpha + 39 \alpha^2 + 2 \alpha^3\) \beta \\
& - \(95+ 56 \alpha + 9 \alpha^2\) \beta^2 + \(43 +11 \alpha\) \beta^3 - 8 \beta^4
\end{align*}

\begin{hide}
	\begin{align*}
	\frac{20}{3}  (3+\alpha^2 +\beta^2 - \alpha \beta + 3 \alpha - 3 \beta)^2 \, c_{11} &= - 60 \alpha - 100 \alpha^2 - 60 \alpha^3 - 12 \alpha^4 + 100 \alpha \beta \\
	&  95 \alpha^2 \beta+27 \alpha^3 \beta + 20 \beta^2 \\
	& - 50 \alpha \beta^2 - 22 \alpha^2 \beta^2 - 25 \beta^3 + 7 \alpha \beta^3 + 8 \beta^4
	\end{align*}
\end{hide}

\begin{align*}
\frac{20}{3}  (3+\alpha^2 +\beta^2 - \alpha \beta + 3 \alpha - 3 \beta)^2 \, c_{11} &= 
-4 \alpha \(15+ 25 \alpha + 15 \alpha^2 + 3 \alpha^3\)\\
&   +\alpha \(100+ 95 \alpha+27 \alpha^2\) \beta  \\
&  +2 \(10- 25 \alpha - 11 \alpha^2\) \beta^2 - \(25 -7\alpha\) \beta^3 + 8 \beta^4
\end{align*}

\begin{align*}
c_{20} = c_{02}= \frac{3  (1-\beta)^2}{2 (3+\alpha^2 +\beta^2 - \alpha \beta + 3 \alpha - 3 \beta)}
\end{align*}

\begin{align*}
c_{30}=c_{03}= \frac{\(1-\beta\) \(\alpha+\beta\) }{2 (3+\alpha^2 +\beta^2 - \alpha \beta + 3 \alpha - 3 \beta)}
\end{align*}

\begin{align*}
c_{21}=	c_{12}=  \frac{3 \(1-\beta\) \(\alpha+\beta\)}{2 (3+\alpha^2 +\beta^2 - \alpha \beta + 3 \alpha - 3 \beta)}
\end{align*}

\begin{align*}c_{31}=c_{13}	= \frac{\(\alpha+ \beta\)^2}{2 (3+\alpha^2 +\beta^2 - \alpha \beta + 3 \alpha - 3 \beta)}
\end{align*}

\section{Details of the perturbation kernel for GSARC BCs with \texorpdfstring{$\Delta$=0}{}}\label{Appendix:B}

The following is the list of entries of the matrix associated with the GSARC case with zero discriminant in \cref{sec:Perturb GSARC with Zero Discriminant}. They were calculated using Wolfram Mathematica\textsuperscript{\textregistered}. The $d_{ij}$ coefficients are defined by \cref{eq:GSARC:T}. They are explicitly given by 
\begin{align}\label{eq:d-coeff}
d_{00}&=-c_{00} \notag \\
d_{10}&=\frac{1}{2} - c_{10}= -c_{01}- \frac{1}{2} \notag \\
d_{20}&=-c_{20}=-c_{02} \notag \\
d_{11}&=-c_{11} \notag \\
d_{30}&=-c_{30}=-c_{03} \notag \\
d_{21}&=-c_{21}=-c_{12}\notag \\
d_{31}&=-c_{31}=-c_{13}
\end{align}

\begin{equation*}
m_{11}=d_{00}+ \frac{d_{10}}{2}+\frac{d_{20}}{3}+\frac{d_{30}}{4}
\end{equation*}

\begin{equation*}
m_{12}=\frac{d_{00}}{2}+ \frac{d_{10}}{3}+\frac{d_{20}}{4}+\frac{d_{30}}{5}
\end{equation*}

\begin{equation*}
m_{13}=\frac{d_{00}}{3}+ \frac{d_{10}}{4}+\frac{d_{20}}{5}+\frac{d_{30}}{6}
\end{equation*}

\begin{equation*}
m_{14}=\frac{d_{00}}{4}+ \frac{d_{10}}{5}+\frac{d_{20}}{6}+\frac{d_{30}}{7}
\end{equation*}

\begin{equation*}
m_{21}=d_{10}+ \frac{d_{11}}{2}+\frac{d_{31}}{4}
\end{equation*}

\begin{equation*}
m_{22}=\frac{d_{10}}{2}+ \frac{d_{11}}{3}+\frac{d_{31}}{5}
\end{equation*}	

\begin{equation*}
m_{23}=\frac{d_{10}}{3}+ \frac{d_{11}}{4}+\frac{d_{31}}{6}
\end{equation*}

\begin{equation*}
m_{24}=\frac{d_{10}}{4}+ \frac{d_{11}}{5}+\frac{d_{31}}{7}
\end{equation*}

\begin{equation*}
m_{31}=d_{20}
\end{equation*}	

\begin{equation*}
m_{32}=\frac{d_{20}}{2}
\end{equation*}

\begin{equation*}
m_{33}=\frac{d_{20}}{3}
\end{equation*}

\begin{equation*}
m_{34}=\frac{d_{20}}{4}
\end{equation*}

\begin{equation*}
m_{41}=d_{30}+ \frac{d_{31}}{2}
\end{equation*}

\begin{equation*}
m_{42}=\frac{d_{30}}{2}+ \frac{d_{31}}{3}
\end{equation*}

\begin{equation*}
m_{43}=\frac{d_{30}}{3}+ \frac{d_{31}}{4}
\end{equation*}

\begin{equation*}
m_{44}=\frac{d_{30}}{4}+ \frac{d_{31}}{5}.
\end{equation*}

In terms of the GSARC parameters, $\alpha,\beta$ (when the discriminant is equal to zero), after a lot of simplifications using Wolfram Mathematica\textsuperscript{\textregistered}, the matrix elements reduce to the following:

\begin{hide}
	\begin{align*}
	-40 \(3 + 3 \alpha -3 \beta - \alpha \beta + \alpha^2 + \beta^2\)^2 \, m_{11} &= 
	90 + 195 \alpha + 173 \alpha^2 + 69 \alpha^3 + 10 \alpha^4 -165 \beta \\
	&  - 224 \alpha \beta - 114 \alpha^2 \beta - 19 \alpha^3 \beta + 143 \beta^2 \\
	&  + 120 \alpha \beta^2 + 31 \alpha^2 \beta^2 - 57 \beta^3 - 21 \alpha \beta^3 + 9\beta^4
	\end{align*}
\end{hide}
\begin{align*}
40 \(3 + 3 \alpha -3 \beta - \alpha \beta + \alpha^2 + \beta^2\)^2 \, m_{11}
&= - \(90 + 195 \alpha + 173 \alpha^2 + 69 \alpha^3 + 10 \alpha^4 \)  \\
&  + \(165 + 224 \alpha + 114 \alpha^2 + 19 \alpha^3\) \beta  \\
&  - \(143 + 120 \alpha + 31 \alpha^2 \) \beta^2 +3\(19 +7 \alpha\) \beta^3 - 9\beta^4
\end{align*}

\begin{hide}
	\begin{align*}
	-120 \(3 + 3 \alpha -3 \beta - \alpha \beta + \alpha^2 + \beta^2\)^2 \, m_{12} &=
	135 + 291 \alpha + 273 \alpha^2 + 120 \alpha^3 + 20 \alpha^4 -294 \beta \\
	&  - 402 \alpha \beta - 216 \alpha^2 \beta - 40 \alpha^3 \beta + 270 \beta^2 \\
	&  + 231 \alpha \beta^2 + 63 \alpha^2 \beta^2 - 108 \beta^3 - 40 \alpha \beta^3 + 17 \beta^4
	\end{align*}    
\end{hide}
\begin{align*}
120 \(3 + 3 \alpha -3 \beta - \alpha \beta + \alpha^2 + \beta^2\)^2 \, m_{12}
&= -\(135 + 291 \alpha + 273 \alpha^2 + 120 \alpha^3 + 20 \alpha^4\)\\
&  +2 \(147 + 201 \alpha + 108 \alpha^2 +20 \alpha^3 \) \beta \\
&  -3 \(90  + 77 \alpha  + 21 \alpha^2 \) \beta^2 + 4 \(27+ 10 \alpha \) \beta^3- 17 \beta^4
\end{align*}

\begin{hide}
	\begin{align*}
	-240 \(3 + 3 \alpha -3 \beta - \alpha \beta + \alpha^2 + \beta^2\)^2 \, m_{13}
	&= 186 + 396 \alpha + 378 \alpha^2 + 172 \alpha^3 + 30 \alpha^4 -438 \beta \\
	&  - 597 \alpha \beta - 325 \alpha^2 \beta - 62 \alpha^3 \beta +411 \beta^2 \\
	&  + 352 \alpha \beta^2 + 97 \alpha^2 \beta^2 - 165 \beta^3 - 61 \alpha \beta^3 + 26 \beta^4
	\end{align*}    
\end{hide}
\begin{align*}
240 \(3 + 3 \alpha -3 \beta - \alpha \beta + \alpha^2 + \beta^2\)^2 \, m_{13}
&= -2 \(93 + 198 \alpha + 189 \alpha^2 + 86 \alpha^3 + 15 \alpha^4\)  \\
&  +\(438  + 597 \alpha + 325 \alpha^2 + 62 \alpha^3\) \beta \\
&  - \(411  + 352 \alpha  + 97 \alpha^2 \) \beta^2 + \(165 + 61 \alpha\) \beta^3 - 26 \beta^4
\end{align*}

\begin{hide}
	\begin{align*}
	- 2800 \(3 + 3 \alpha -3 \beta - \alpha \beta + \alpha^2 + \beta^2\)^2 \, m_{14} 
	&= 1680 + 3540 \alpha + 3400 \alpha^2 + 1572 \alpha^3 + 280 \alpha^4 -4125 \beta \\
	&  - 5605 \alpha \beta - 3064 \alpha^2 \beta - 592 \alpha^3 \beta +3910 \beta^2 \\
	&  + 3346 \alpha \beta^2 + 924 \alpha^2 \beta^2 - 1573 \beta^3 - 581 \alpha \beta^3 - 248 \beta^4
	\end{align*}   
\end{hide}
\begin{align*}
2800 \(3 + 3 \alpha -3 \beta - \alpha \beta + \alpha^2 + \beta^2\)^2 \, m_{14} 
&= - 4\(420 + 885 \alpha + 850 \alpha^2 + 393 \alpha^3 + 70 \alpha^4 \)  \\
&   + \(4125 + 5605 \alpha + 3064 \alpha^2 + 592 \alpha^3\) \beta  \\
&  - 2 \(1955 + 1673 \alpha + 462 \alpha^2\) \beta^2  
+ \(1573+ 581 \alpha \) \beta^3   - 248 \beta^4
\end{align*}

\begin{hide}
	\begin{align*}
	40 \(3 + 3 \alpha -3 \beta - \alpha \beta + \alpha^2 + \beta^2\)^2 \, m_{21}
	&= 180 + 420 \alpha + 345 \alpha^2 + 117 \alpha^3 + 11 \alpha^4 -300 \beta \\
	& - 540 \alpha \beta - 294 \alpha^2 \beta - 58 \alpha^3 \beta + 195 \beta^2 \\
	& + 261 \alpha \beta^2 + 60 \alpha^2 \beta^2 - 48 \beta^3 - 52 \alpha \beta^3 - \beta^4
	\end{align*}    
\end{hide}
\begin{align*}
40 \(3 + 3 \alpha -3 \beta - \alpha \beta + \alpha^2 + \beta^2\)^2 \, m_{21}
&= \(180 + 420 \alpha + 345 \alpha^2 + 117 \alpha^3 + 11 \alpha^4 \)\\
& -  2 \(150 + 270 \alpha + 147 \alpha^2+ 29 \alpha^3\) \beta \\
& + 3 \(65 + 87 \alpha + 20 \alpha^2\) \beta^2  - 4 \(12 + 13 \alpha\) \beta^3  - \beta^4
\end{align*}

\begin{hide}
	\begin{align*}
	40 \(3 + 3 \alpha -3 \beta - \alpha \beta + \alpha^2 + \beta^2\)^2 \, m_{22}
	&= 90 + 240 \alpha + 218 \alpha^2 + 84 \alpha^3 + 10 \alpha^4 -150 \beta \\
	& - 329 \alpha \beta - 199 \alpha^2 \beta - 44 \alpha^3 \beta + 83 \beta^2\\
	& +160 \alpha \beta^2 + 41 \alpha^2 \beta^2 - 7 \beta^3 - 31 \alpha \beta^3 -6 \beta^4
	\end{align*}
\end{hide}
\begin{align*}
40 \(3 + 3 \alpha -3 \beta - \alpha \beta + \alpha^2 + \beta^2\)^2 \, m_{22}
&= 2 \(45 + 120 \alpha + 109 \alpha^2 + 42 \alpha^3 + 5 \alpha^4\) \\
&  - \(150 + 329 \alpha + 199 \alpha^2 +44 \alpha^3\)  \beta \\
&  +\(83+160 \alpha + 41 \alpha^2\)\beta^2 - \(7 + 31 \alpha \) \beta^3 -6 \beta^4
\end{align*}

\begin{hide}
	\begin{align*}
	80 \(3 + 3 \alpha -3 \beta - \alpha \beta + \alpha^2 + \beta^2\)^2 \, m_{23} &= 
	120 + 340 \alpha + 320 \alpha^2 + 128 \alpha^3 + 16 \alpha^4 -200 \beta \\
	&  - 480 \alpha \beta - 301 \alpha^2 \beta - 69 \alpha^3 \beta + 100 \beta^2 \\
	&  + 234 \alpha \beta^2 + 62 \alpha^2 \beta^2 +3 \beta^3 - 45 \alpha \beta^3 - 12\beta^4
	\end{align*}  
\end{hide}
\begin{align*}
80 \(3 + 3 \alpha -3 \beta - \alpha \beta + \alpha^2 + \beta^2\)^2 \, m_{23}
&= 4 \(30 + 85 \alpha + 80 \alpha^2 + 32 \alpha^3 + 4 \alpha^4 \)\\
&  - \(200+ + 480 \alpha +301 \alpha^2+69 \alpha^3\) \beta\\
&  + 2\(50 + 117 \alpha + 31 \alpha^2\) \beta^2 + 3 \(1- 15 \alpha\) \beta^3 - 12\beta^4
\end{align*}

\begin{hide}
	\begin{align*}
	2800 \(3 + 3 \alpha -3 \beta - \alpha \beta + \alpha^2 + \beta^2\)^2 \, m_{24} &= 
	3150 + 9240 \alpha + 8850 \alpha^2 + 3600 \alpha^3 + 458 \alpha^4 \\
	&  -5250 \beta - 13275 \alpha \beta - 8475 \alpha^2 \beta - 1978 \alpha^3 \beta \\
	&  + 2445 \beta^2 + 6480 \alpha \beta^2 + 1743 \alpha^2 \beta^2 +285 \beta^3 
	\\ 
	&  - 1243 \alpha \beta^3 - 382 \beta^4
	\end{align*}    
\end{hide}

\begin{align*}
2800 \(3 + 3 \alpha -3 \beta - \alpha \beta + \alpha^2 + \beta^2\)^2 \, m_{24} &= 
2 \(1575 +4620 \alpha +4425 \alpha^2+1800 \alpha^3+229 \alpha^4\) \\
&  - \(5250 +13275 \alpha +8475 \alpha^2+ 1978 \alpha^3\) \beta \\
&  + 3 \(815 +2160\alpha +581 \alpha^2\) \beta^2 + \(285-1243 \alpha\) \beta^3 - 382 \beta^4
\end{align*}

\begin{equation*}
m_{31}=-\frac{3 \(1-\beta\)^2}{2 \(3 + 3 \alpha -3 \beta - \alpha \beta + \alpha^2 + \beta^2\)}
\end{equation*}

\begin{equation*}
m_{32}=-\frac{3 \(1-\beta\)^2}{4 \(3 + 3 \alpha -3 \beta - \alpha \beta + \alpha^2 + \beta^2\)}
\end{equation*}

\begin{equation*}
m_{33}=-\frac{\(1-\beta\)^2}{2 \(3 + 3 \alpha -3 \beta - \alpha \beta + \alpha^2 + \beta^2\)}
\end{equation*}

\begin{equation*}
m_{34}=-\frac{3 \(1-\beta\)^2}{8 \(3 + 3 \alpha -3 \beta - \alpha \beta + \alpha^2 + \beta^2\)}
\end{equation*}

\begin{equation*}
m_{41}=-\frac{\(2+ \alpha - \beta\) \(\alpha+ \beta\)}{4 \(3 + 3 \alpha -3 \beta - \alpha \beta + \alpha^2 + \beta^2\)}
\end{equation*}

\begin{equation*}
m_{42}=-\frac{\(3+ 2 \alpha - \beta\) \(\alpha+ \beta\)}{12 \(3 + 3 \alpha -3 \beta - \alpha \beta + \alpha^2 + \beta^2\)}
\end{equation*}

\begin{equation*}
m_{43}=-\frac{\(4+ 3\alpha - \beta\) \(\alpha+ \beta\)}{24 \(3 + 3 \alpha -3 \beta - \alpha \beta + \alpha^2 + \beta^2\)}
\end{equation*}

\begin{equation*}
m_{44}=-\frac{\(5+ 4\alpha - \beta\) \(\alpha+ \beta\)}{40 \(3 + 3 \alpha -3 \beta - \alpha \beta + \alpha^2 + \beta^2\)}.
\end{equation*}

\section{Details of the perturbation kernel for GSARS BCs with \texorpdfstring{$\tilde{\Delta}$=0}{}}\label{Appendix:C}

In either case $\theta_1=\pi+\arctan\left(-1+\tan \theta_0\right)$ for $\theta_0 \in [0, \pi/4)\cup (\pi/2, \pi)$, or $\theta_1=\arctan\left(-1+\tan \theta_0\right)$ for $\theta_0 \in [\pi/4, \pi/2)$, for $0\le x \le y \le 1$, the resolvent is such that
\begin{align*} 
\( 2 \sqrt{z} \cos \sqrt{z} \cos^2\theta_0 + \sin \sqrt{z} \left(-1-z - (1-z) \cos 2 \theta_0+ z \sin 2 \theta_0\right) \)	G(z,x,y)
\end{align*}
is equal to 
\begin{align*}
& 2 \sin \theta_0 \left( \cos \theta_0 \sin(\sqrt{z} (1-y)) +\sqrt{z} \cos (\sqrt{z} (1-y)) \left(\sin \theta_0-\cos \theta_0 \right) \right)  \, \cos (\sqrt{z} x)
\\
& \hspace{1em}  + 2 \cos \theta_0 \left( - \cos \theta_0 \sin(\sqrt{z} (1-y)) +\sqrt{z} \cos (\sqrt{z} (1-y)) \left(\cos \theta_0-\sin \theta_0 \right) \right)  \,\frac{\sin (\sqrt{z} x)}{\sqrt{z} }. 
\end{align*}
Calculating $\lim_{z\to 0^{+}} z^n \, G(z, x,y)$ for $n=1, 2,\ldots$ give a Riesz projection with a kernel
\begin{equation*}
    p(x,y)=\frac{6 \left(x \cos \theta_0 - \sin \theta_0\right) \, \left(y \cos \theta_0 - \sin \theta_0\right)}{-4 + 2 \cos 2 \theta_0+ 3 \sin 2 \theta_0}
\end{equation*}
and a nilpotent operator $D\equiv 0$. Note by simple calculus that the expression $-4 + 2 \cos 2 \theta_0+ 3 \sin 2 \theta_0$ is never zero, since it has a maximum value of $-4+\sqrt{13}\approx -0.394$ at 
$\theta_0=\frac{1}{2}\arctan \frac{3}{2}$ and minimum value of $-4+\sqrt{13}\approx -7.606$ at $\theta_0=\frac{\pi}{2} + \frac{1}{2}\arctan \frac{3}{2}$. 
The Green's function is such that 
\begin{align*} 
-20 \left(-4 + 2 \cos 2 \theta_0+ 3 \sin 2 \theta_0\right)^2 \,	G_0(x,y)
\end{align*}
is equal to 
\begin{align*} \label{eq:resolvent-GSARS}
a_0(x,y)+ a_2(x,y) \cos 2 \theta_0 + a_4(x,y) \cos 4 \theta_0 + b_2(x,y) \sin 2 \theta_0 + b_4(x,y) \sin 4 \theta_0,  
\end{align*}
where 
\begin{align*}
a_0(x,y)&=-132-90x + 360 y -150 x^2 + 204 x y -150 y^2 - 15 x^3 - 45 x^2 y - 45 x y^2 - 15 y^3 - 30 x^3 y - 30 x y^3\\
a_2(x,y)&= 120 -40 x- 360 y+180 x^2+72 x y +180 y^2- 20 x^3 y - 20 x y^3\\
a_4(x,y)&= 12+ 50 x - 30 x^2-132 x y - 30 y^2 + 15 x^3 + 45 x^2 y + 45 x y^2 + 15 y^3+ 10 x^3 y + 10 x y^3\\
b_2(x,y)&= 140 + 144 x - 336 y+ 90 x^2 - 360 x y + 90 y^2+ 40 x^3+120 x^2 y + 120 x y^2+ 40 y^3+ 30 x^3 y + 30 x y^3\\
b_4(x,y)&= -50 +12 x +132 y - 45 x^2 - 45 y^2 - 10 x^3 - 30 x^2 y- 30 x y^2- 10 y^3+ 15 x^3 y+ 15 x y^3\\. 
\end{align*}
With $T(x,y)= \kappa(x,y)- G_0(x,y)$, the expression of the integral operator $\mathcal{T} f (x) $ is a cubic polynomial, hence the eigenvector takes the form 
$f(x)=a_0+a_1 x+a_2 x^2+ a_3 x^3$ and the corresponding $4\times 4$ matrix $M_{\mathcal{T}}= \left(m_{ij}\right)$ is such that
\begin{align*}
-20 \left(-4 + 2 \cos 2 \theta_0+ 3 \sin 2 \theta_0\right)^2 \,	m_{11}&= \frac{473}{4} - 80 \cos 2 \theta_0- \frac{73}{4} \cos 4 \theta_0 - 132 \sin 2 \theta_0+ \frac{63}{2} \sin 4 \theta_0 
\\
-20 \left(-4 + 2 \cos 2 \theta_0+ 3 \sin 2 \theta_0\right)^2 \,	m_{12}&= \frac{123}{4} - \frac{115}{3} \cos 2 \theta_0- \frac{59}{6} \cos 4 \theta_0 - \frac{137}{2} \sin 2 \theta_0+ \frac{57}{4} \sin 4 \theta_0
\\
-20 \left(-4 + 2 \cos 2 \theta_0+ 3 \sin 2 \theta_0\right)^2 \,	m_{13}&= \frac{171}{4} - 26 \cos 2 \theta_0- \frac{27}{4} \cos 4 \theta_0 - \frac{142}{3} \sin 2 \theta_0+ \frac{28}{3} \sin 4 \theta_0
\\
-20 \left(-4 + 2 \cos 2 \theta_0+ 3 \sin 2 \theta_0\right)^2 \,	m_{14}&= \frac{232}{7} - 20 \cos 2 \theta_0- \frac{36}{7} \cos 4 \theta_0 - \frac{1278}{35} \sin 2 \theta_0+ \frac{246}{35} \sin 4 \theta_0\\
-20 \left(-4 + 2 \cos 2 \theta_0+ 3 \sin 2 \theta_0\right)^2 \,	m_{21}&= -\frac{429}{2} +169 \cos 2 \theta_0+ \frac{47}{2} \cos 4 \theta_0 + \frac{457}{2} \sin 2 \theta_0- \frac{263}{4} \sin 4 \theta_0 \\
-20 \left(-4 + 2 \cos 2 \theta_0+ 3 \sin 2 \theta_0\right)^2 \,	m_{22}&= -\frac{473}{4} +80 \cos 2 \theta_0+ \frac{73}{4} \cos 4 \theta_0 +132 \sin 2 \theta_0- \frac{63}{2} \sin 4 \theta_0 \\
-20 \left(-4 + 2 \cos 2 \theta_0+ 3 \sin 2 \theta_0\right)^2 \,	m_{23}&= -82 +52 \cos 2 \theta_0+ 14\cos 4 \theta_0 +93 \sin 2 \theta_0- \frac{41}{2} \sin 4 \theta_0 \\
-20 \left(-4 + 2 \cos 2 \theta_0+ 3 \sin 2 \theta_0\right)^2 \,	m_{24}&= - \frac{8787}{140} + \frac{1346}{35} \cos 2 \theta_0+ \frac{1571}{140} \cos 4 \theta_0 + \frac{502}{7} \sin 2 \theta_0- \frac{106}{7} \sin 4 \theta_0 \\
-20 \left(-4 + 2 \cos 2 \theta_0+ 3 \sin 2 \theta_0\right)^2 \,	m_{31}&= \frac{345}{2} - 180 \cos 2 \theta_0+ \frac{15}{2} \cos 4 \theta_0 -150 \sin 2 \theta_0+ 60 \sin 4 \theta_0 \\
-20 \left(-4 + 2 \cos 2 \theta_0+ 3 \sin 2 \theta_0\right)^2 \,	m_{32}&= 90 -90 \cos 2 \theta_0- 85 \sin 2 \theta_0+ \frac{65}{2} \sin 4 \theta_0 \\
-20 \left(-4 + 2 \cos 2 \theta_0+ 3 \sin 2 \theta_0\right)^2 \,	m_{33}&= \frac{245}{4} - 60 \cos 2 \theta_0- \frac{5}{4} \cos 4 \theta_0 - 60  \sin 2 \theta_0+ \frac{45}{2} \sin 4 \theta_0 \\
-20 \left(-4 + 2 \cos 2 \theta_0+ 3 \sin 2 \theta_0\right)^2 \,	m_{34}&= \frac{93}{2} - 45 \cos 2 \theta_0- \frac{3}{2} \cos 4 \theta_0 - \frac{93}{2} \sin 2 \theta_0+ \frac{69}{4} \sin 4 \theta_0 \\
-20 \left(-4 + 2 \cos 2 \theta_0+ 3 \sin 2 \theta_0\right)^2 \,	m_{41}&= 30 +10 \cos 2 \theta_0- 20 \cos 4 \theta_0 - 55 \sin 2 \theta_0+ \frac{5}{2} \sin 4 \theta_0 \\
-20 \left(-4 + 2 \cos 2 \theta_0+ 3 \sin 2 \theta_0\right)^2 \,	m_{42}&= \frac{35}{2} + \frac{20}{3} \cos 2 \theta_0- \frac{65}{6} \cos 4 \theta_0 - 30 \sin 2 \theta_0 \\
-20 \left(-4 + 2 \cos 2 \theta_0+ 3 \sin 2 \theta_0\right)^2 \,	m_{43}&= \frac{25}{2}+ 5 \cos 2 \theta_0- \frac{15}{2} \cos 4 \theta_0 - \frac{125}{6} \sin 2 \theta_0+ \frac{5}{12} \sin 4 \theta_0 \\
-20 \left(-4 + 2 \cos 2 \theta_0+ 3 \sin 2 \theta_0\right)^2 \,	m_{44}&= \frac{39}{4} +4 \cos 2 \theta_0- \frac{23}{4} \cos 4 \theta_0 - 16 \sin 2 \theta_0- \frac{1}{2} \sin 4 \theta_0. 
\end{align*}
The determinant of $M_{\mathcal{T}}$ is zero and so is the case of the $3\times 3$ subdetermiant extracted from the first three columns (but not the $2\times 2$ subdeterminant of the first two columns). Hence the rank of the perturbation is 2. A calculation on Wolfram Mathematica\textsuperscript{\textregistered} confirms this, and shows that $\lambda=0$ is a double eigenvalue. 

\vskip 1 cm

\subsection*{Acknowledgments}
We would like to thank Fritz Gesztesy for introducing us to some aspects of this problem and for sharing an advanced copy of \cite{GNZ-AMSBook2023}; Jussi Behrndt for reference \cite{BehrndtHassideSnoo2020}; and Mark Ashbaugh for useful discussions. We acknowledge email correspondence with Walter A. Strauss regarding the ``unusual'' eigenvalue problem discussed in Sec.~\ref{sec:KvN Resolvent} and thank him for his feedback. The authors thank the anonymous referee whose comments helped improve the article significantly. 

L.H. thanks and acknowledges N.S.'s NSF grant DMS-1912747 during his visits to the UC Davis campus in 2019 and 2022 where some of this work was done.

N.S. was partially supported by his NSF grants DMS-1912747, CCF-1934568, and his ONR grant N00014-20-1-2381.
\section*{REFERENCES}
%

\end{document}